\documentclass[11pt,a4paper]{article}
\usepackage[margin=1in]{geometry}
\usepackage{amsmath,amssymb,amsthm}
\usepackage[bookmarksnumbered=true,hidelinks]{hyperref}
\usepackage{enumitem}
\usepackage{xcolor}
\usepackage{tikz}
\usepackage{longtable}
\usepackage{array}

\newtheorem{theorem}{Theorem}
\newtheorem{lemma}[theorem]{Lemma}
\newtheorem{proposition}[theorem]{Proposition}
\newtheorem{conjecture}[theorem]{Conjecture}

\theoremstyle{definition}
\newtheorem{definition}[theorem]{Definition}
\newtheorem{remark}[theorem]{Remark}

\newcommand{\sig}[1]{\ensuremath{(#1)}}
\newcommand{\swap}{\sim}

\newif\ifstrictforwards
\ifdefined\strictforwardsflag
  \strictforwardstrue
\else
  \strictforwardsfalse
\fi

\newcounter{forwardrefcount}
\makeatletter
\newcommand{\forwardref}[1]{%
  \@ifundefined{r@#1}%
    {\ifstrictforwards
       \@latex@error{Unresolved forward reference: #1}{%
         A \string\forwardref\space target is undefined and strict mode is on.}%
     \else
       \stepcounter{forwardrefcount}%
       \textcolor{orange}{\textbf{[forward: #1]}}%
       \@latex@warning{Forward reference to undefined label '#1'}%
     \fi}%
    {\ref{#1}}%
}
\makeatother

\newcommand{\reportforwardrefs}{%
  \ifnum\value{forwardrefcount}>0
    \typeout{========================================}%
    \typeout{UNRESOLVED FORWARD REFERENCES: \arabic{forwardrefcount}}%
    \typeout{(Search the log for 'Forward reference to undefined label')}%
    \typeout{========================================}%
  \fi
}

\AtEndDocument{\reportforwardrefs}

\IfFileExists{ornamental-braid.pdf}{%
  \ifdefined\pdfximage
    \usepackage{graphicx}%
    \usepackage{eso-pic}%
    \pdfximage{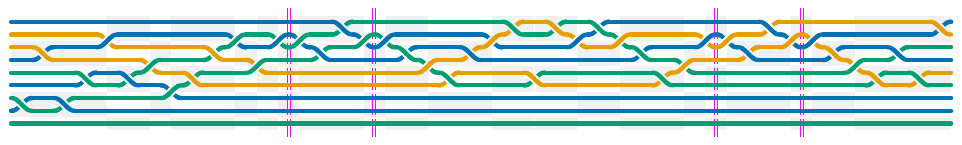}%
    \edef\decoMaxPage{\the\pdflastximagepages}%
    \newcounter{decoPage}%
    \AddToShipoutPictureBG{%
      \stepcounter{decoPage}%
      \ifnum\value{decoPage}>\decoMaxPage\relax\else
        \AtPageLowerLeft{%
          \includegraphics[page=\thedecoPage]{ornamental-braid.pdf}}%
      \fi}%
  \fi
}{}%

\title{A proof of\\
  Lehmer's permutation conjecture for neighbor-swap graphs}
\author{Tom Verhoeff\\
  Department of Mathematics and Computer Science\\
  Eindhoven University of Technology (TU/e)\\
  Eindhoven, The Netherlands\\
  \href{mailto:T.Verhoeff@tue.nl}{T.Verhoeff@tue.nl}}

\date{September 30, 2026}

\begin{document}
\maketitle

\begin{abstract}
In 1965, D.~H.~Lehmer conjectured that the permutations of every
multiset admit an \emph{imperfect} Hamiltonian traversal by adjacent
swaps: a walk in the neighbor-swap graph that visits every word, with some
words visited twice in order to reach a neighbor and return.  The
question is posed as an unsolved research problem in Knuth's
\emph{Art of Computer Programming}.  Verhoeff (2017) chose the stutter
words, in which every domino is a double, as the words to be reached
this way, and reformulated the conjecture as the Hamiltonicity of the
graph $N(S)$ on the non-stutter words, with two exceptional families
--- binary signatures with an odd multiplicity, and the permutations
of $(2k,1,1)$ --- that admit a Hamiltonian path but no cycle.  This
article proves the reformulated conjecture, and with it Lehmer's
conjecture.  The key structure is a partition of the words into
\emph{hypercubes}: the swaps inside dominoes turn each class of words
with the same domino contents into a hypercube, and the stutters are
exactly the $0$-dimensional classes.  When every multiplicity is even,
Hamiltonian cycles of the hypercubes are glued along a spanning tree,
with no finite check.  The case of exactly one odd multiplicity
reduces to the all-even case and to a theorem of Stachowiak (1992),
the one inherited Hamiltonicity input, which also settles two or more
odd multiplicities.  The only finite ingredients are two explicit
cycles, of $28$ and $84$ words.  Every construction is implemented in
Python and checked against brute-force graphs, and the proof is
formalized in Lean~4 over Mathlib.
\end{abstract}

\tableofcontents

\section{Introduction}
\label{sec:intro}

\subsection{Lehmer's permutation conjecture}

A word is a sequence of colored objects.  Our objects are the
nonnegative integers and the color of an object is its value; we often
say \emph{color} for a colored object.  For example,
\[
  013320122
\]
is a word of length $n = 8$ over $K = 4$ colors, each occurring twice.
The number of occurrences of a color is its multiplicity, and the
tuple of multiplicities is the \emph{signature} of the word; here it
is $S = (2,2,2,2)$.  Signatures are considered up to renaming the
colors.  Two words are \emph{neighbors} when one is obtained from the
other by transposing two adjacent, distinct colors; the
\emph{neighbor-swap graph} $G(S)$ has as its vertices the words of
signature $S$, with the neighbor pairs as edges.  (The formal
definitions are collected in Section~\ref{sec:preliminaries}.)
Lehmer \cite{lehmer1965} conjectured in 1965 that every such graph
admits an \emph{imperfect} Hamiltonian traversal: a walk visiting
every vertex, with each vertex visited exactly once except possibly
some \emph{spur bases}, visited twice in order to reach a
\emph{spur tip} (a vertex at distance $1$ from the walk) and return.
Lehmer prescribed neither the spur tips nor a minimum count; his
conjecture asks only that a walk of this form exist.  The question is
also Exercise~53 of \S7.2.1.3 of Knuth's \emph{Art of Computer
Programming} \cite{knuth2011}, rated there among the unsolved
research problems; this article solves that exercise.

Verhoeff \cite{verhoeff2017} reformulated the conjecture by
identifying the \emph{stutter words} --- those in which every
domino is a double (Section~\ref{prelim:stutter}) --- as a natural
and optimal choice of spur tips, and reduced it to a Hamiltonicity statement for
the subgraph $N(S) \subseteq G(S)$ induced by the non-stutter
words.  This reduced statement, the \emph{reformulated Lehmer
conjecture}, asserts that $N(S)$ admits a Hamiltonian cycle for every
signature $S$ of arity at least $2$ outside two families: the binary
signatures with an odd multiplicity, and the permutations of
$\sig{2k,1,1}$.  These two families are exactly the signatures whose
non-stutter graph admits a Hamiltonian path but no Hamiltonian cycle:
the binary classification is Verhoeff's
\cite[Theorem~9]{verhoeff2017} (proved earlier for the path in the
odd case by Eades, Hickey, and Read \cite{eades1984}), and the family
$\sig{2k,1,1}$ is excluded by a parity argument across an edge cut,
due to Stachowiak \cite[Lemma~12]{stachowiak1992} and recorded by
Verhoeff \cite[\S5.4]{verhoeff2017}.  At two or more odd
multiplicities, the graph has no stutters and a Hamiltonian cycle is
supplied by Stachowiak's theorem \cite{stachowiak1992}.  The open case
of the reformulated conjecture is therefore arity at least $3$ with at
most one odd multiplicity; Verhoeff \cite[Theorem~13]{verhoeff2017}
proved that a disjoint cycle cover exists there, and sketched the
family $\sig{2j,2,1}$ \cite[\S5.3]{verhoeff2017}.  Opperman
\cite{opperman2024} pursued a merge-of-covers route toward the whole
conjecture in a master's thesis.  That treatment is incomplete: some
cross edges that its gluing steps require are not among the edges it
establishes.\footnote{For example, in \S3.9.2.5 of the thesis the cross
edge~(3.126) is identified with the guaranteed edge~(3.166) of
\S3.11.  The two are different edges: with the two trailing colors
removed, the words of~(3.126) end in a color of even multiplicity in
their signature, while the words of~(3.166) end in a color of odd
multiplicity.}  No result of it is used here, and to our knowledge the
theorem is open in the refereed literature.  The
proof below relies on exactly the two inherited results stated in
Section~\ref{sec:inherited} --- Verhoeff's binary classification and
Stachowiak's theorem --- and on nothing else from the literature; in
particular the Hamiltonian cycles of the hypercubes that it uses are
constructed here (Lemma~\ref{cube:boustrophedon}).

\subsection{The result}

This article proves the reformulated conjecture, and with it Lehmer's
conjecture:

\medskip
\noindent\textbf{Main theorem} (Theorem~\ref{assembly:theorem}).
\emph{Let $S$ be a signature of arity at least $2$ that is neither a
binary signature with an odd multiplicity nor a permutation of $\sig{2k,1,1}$.  Then $N(S)$ admits a Hamiltonian cycle.}
\medskip

\noindent
At the two exceptional families a Hamiltonian path exists and no cycle
does, so the exception set is exact; Verhoeff's reduction then turns
the cycles, and the paths at the exceptions, into Lehmer paths of
$G(S)$ (Remark~\ref{assembly:lehmer}).  The proof is assembled in
Section~\ref{sec:assembly} from two new theorems, at zero and at one
odd multiplicity.  The inherited Stachowiak theorem is used at two
levels: at the top level it covers the signatures with two or more
odd multiplicities outright, and inside the one-odd proof it supplies the cycles of the
two-odd blocks, which are used as arbitrary Hamiltonian cycles, with
no property assumed beyond existence (Section~\ref{sec:one-odd}).

\subsection{The proof in outline}

Group the positions into \emph{dominoes}: the two-position
containers $(0,1)$, $(2,3)$, \ldots, where domino $i$ consists of
positions $2i$ and $2i+1$ and carries the two colors that occupy them
(when the length is odd the last position is left over), as in
$01331022 = 01|33|10|22$.  The \emph{shifted dominoes} are the same
containers shifted one position.  A domino is
\emph{mixed} if the two colors it carries differ and a \emph{double}
otherwise.  The \emph{skeleton} records the content of each domino as
an unordered pair of colors; for instance $01331022$, $10331022$,
$01330122$ and $10330122$ all have the skeleton $01|33|01|22$.  A
swap inside a domino (an \emph{intra-domino} swap) flips it,
exchanging the two colors it carries, so it stays inside the set of
words with a given skeleton.  A swap between adjacent dominoes (an \emph{inter-domino} swap)
flips a shifted domino, exchanging its two colors; it changes both
dominoes, and it changes the skeleton exactly when those two colors
differ.

The skeleton classes are the key structure.  The intra-domino swaps
turn a skeleton class into a hypercube, with one coordinate per mixed
domino, and the stutters are exactly the $0$-dimensional classes.
The class of the example is a square, and a class with three mixed
dominoes is a $3$-cube (Figure~\ref{intro:skeleton-figure}).

\begin{figure}[ht]
\centering
\begin{tikzpicture}[scale=1.0,
  word/.style={font=\ttfamily\footnotesize}]
  % ---- the square class (left), rotated so that its right edge faces
  % ---- the cube's front-left edge (both flip the leftmost domino)
  \node[word] (C) at (0.3,2.6) {01330122};
  \node[word] (A) at (2.9,2.6) {01331022};
  \node[word] (D) at (0.3,0.0) {10330122};
  \node[word] (B) at (2.9,0.0) {10331022};
  \draw (C)--(A);
  \draw (A)--(B);
  \draw (B)--(D);
  \draw (D)--(C);
  \node[word] at (1.6,3.4) {skeleton $01|33|01|22$};
  % ---- the 3-cube class (right): two shifted squares; vertical edges
  % ---- flip the leftmost domino, the four rungs are diagonal
  \node[word] (F00) at (5.4,2.6) {01331202};
  \node[word] (F10) at (5.4,0.0) {10331202};
  \node[word] (F01) at (8.0,2.6) {01332102};
  \node[word] (F11) at (8.0,0.0) {10332102};
  \node[word] (G00) at (6.3,3.5) {01331220};
  \node[word] (G10) at (6.3,0.9) {10331220};
  \node[word] (G01) at (8.9,3.5) {01332120};
  \node[word] (G11) at (8.9,0.9) {10332120};
  \draw (F00)--(F10);
  \draw (F10)--(F11);
  \draw (F11)--(F01);
  \draw (F01)--(F00);
  \draw (G00)--(G10);
  \draw (G10)--(G11);
  \draw (G11)--(G01);
  \draw (G01)--(G00);
  \draw (F00)--(G00);
  \draw (F10)--(G10);
  \draw (F01)--(G01);
  \draw (F11)--(G11);
  \node[word] at (7.3,4.3) {skeleton $01|33|12|02$};
  % ---- the two inter-domino edges straight across ----
  \draw[dashed] (A)--(F00);
  \draw[dashed] (B)--(F10);
\end{tikzpicture}
\caption{Two skeleton classes.  The four words with skeleton
$01|33|01|22$ (left) form a square; the eight with skeleton
$01|33|12|02$ (right) form a $3$-cube, whose four rungs --- the flips
of its rightmost mixed domino --- are the slanted edges.  The two
dashed edges are inter-domino swaps, pairing $01331022$ with
$01331202$ and $10331022$ with $10331202$; all solid edges are
intra-domino swaps.}
\label{intro:skeleton-figure}
\end{figure}

With every multiplicity even, no class has dimension $1$, so the
non-stutter graph is the disjoint union of the hypercubes of dimension
at least $2$, joined by inter-domino edges.  Every hypercube carries a
Hamiltonian cycle that sweeps back and forth through all its rungs
--- the edges that flip its rightmost mixed domino --- and these
cycles are then merged along a spanning tree of the hypercubes
(Figure~\ref{intro:stitch-figure}): each
edge of the tree is realized by a stitch carrying one hypercube edge
in each of the two hypercubes, and splicing the two cycles at these
edges merges them into one.  The result is a Hamiltonian cycle of
$N(S)$ with no finite check (Sections~\ref{sec:tools}
and~\ref{sec:all-even}).

\begin{figure}[ht]
\centering
\begin{tikzpicture}[
  x=1.6cm, y=1.6cm,
  word/.style={draw, rounded corners=1.5pt, inner sep=2.0pt,
               font=\ttfamily\scriptsize, fill=white},
  stut/.style={word, fill=yellow!30},
  even/.style={thick, draw=blue!75!black},
  odd/.style={thick, dashed, draw=red!80!black},
  elabul/.style={font=\tiny, text=blue!75!black, inner sep=0.5pt,
                 anchor=south, pos=0.5, xshift=-1.5pt, yshift=1.5pt},
  elabur/.style={font=\tiny, text=blue!75!black, inner sep=0.5pt,
                 anchor=south, pos=0.5, xshift=1.5pt, yshift=1.5pt},
  olabul/.style={font=\tiny, text=red!80!black, inner sep=0.5pt,
                 anchor=south, pos=0.5, xshift=-1.5pt, yshift=1.5pt},
  olabur/.style={font=\tiny, text=red!80!black, inner sep=0.5pt,
                 anchor=south, pos=0.5, xshift=1.5pt, yshift=1.5pt},
]
  % vertices: (inversion count, layer)
  \node[stut] (S1) at (0,-1) {000011};
  \node[word] (A1) at (1, 0) {000101};
  \node[word] (A2) at (2,-1) {001001};
  \node[word] (A3) at (2, 1) {000110};
  \node[word] (A4) at (3, 0) {001010};
  \node[word] (B1) at (3,-2) {010001};
  \node[word] (B2) at (4,-1) {010010};
  \node[word] (B3) at (4,-3) {100001};
  \node[stut] (S2) at (4, 1) {001100};
  \node[word] (B4) at (5,-2) {100010};
  \node[word] (C1) at (5, 0) {010100};
  \node[word] (C2) at (6,-1) {100100};
  \node[word] (C3) at (6, 1) {011000};
  \node[word] (C4) at (7, 0) {101000};
  \node[stut] (S3) at (8,-1) {110000};
  % cube A (skeleton 00|01|01)
  \draw[even] (A1)--(A2) node[elabur] {2};
  \draw[even] (A1)--(A3) node[elabul] {4};
  \draw[even] (A2)--(A4) node[elabul] {4};
  \draw[even] (A3)--(A4) node[elabur] {2};
  \draw[even] (B1)--(B2) node[elabul] {4};
  \draw[even] (B1)--(B3) node[elabur] {0};
  \draw[even] (B2)--(B4) node[elabur] {0};
  \draw[even] (B3)--(B4) node[elabul] {4};
  \draw[even] (C1)--(C2) node[elabur] {0};
  \draw[even] (C1)--(C3) node[elabul] {2};
  \draw[even] (C2)--(C4) node[elabul] {2};
  \draw[even] (C3)--(C4) node[elabur] {0};
  \draw[odd] (A2)--(B1) node[olabur] {1};
  \draw[odd] (A4)--(B2) node[olabur] {1};
  \draw[odd] (B2)--(C1) node[olabul] {3};
  \draw[odd] (B4)--(C2) node[olabul] {3};
  \draw[odd] (A1)--(S1) node[olabul] {3};
  \draw[odd] (A4)--(S2) node[olabul] {3};
  \draw[odd] (C1)--(S2) node[olabur] {1};
  \draw[odd] (C4)--(S3) node[olabur] {1};
  % cube labels at the centres of the squares
  \node[font=\large] at (2,0) {A};
  \node[font=\large] at (4,-2) {B};
  \node[font=\large] at (6,0) {C};
  % stitch names, each inside its square
  \node[font=\scriptsize] at (3,-1) {rung stitch};
  \node[font=\scriptsize] at (5,-1) {descent stitch};

\end{tikzpicture}
\caption{The smallest all-even signature whose families have more than
one cube: the twelve non-stutter permutations of $\sig{4,2}$ form the
squares $A$, $B$, $C$ (skeletons $00|01|01$, $01|00|01$, $01|01|00$),
and the three yellow permutations are the stutters, the
$0$-dimensional cubes.  Vertices are permutations, placed by inversion
count; solid blue edges swap inside a domino, dashed red edges
swap across two dominoes, and an edge carries its swap position $s_p$.
$A$ and $B$ form the family $F(\emptyset,01)$ --- their rightmost
mixed domino is the last, with no double after it --- and $C$ the
family $F((00),01)$.  The spanning tree is built first inside the
families: the rung stitch joins the rungs $s_4$ of $A$ and $B$ by the
cross swaps $s_1$.  Then the families are joined: the descent stitch
attaches $F((00),01)$ to $F(\emptyset,01)$ by swapping $s_3$ --- the
end of $C$'s rung with its trailing double --- and using $s_0$ as its
domino edges.  The cycle visits no stutter.}
\label{intro:stitch-figure}
\end{figure}

The one-odd theorem (Section~\ref{sec:one-odd}) splits by the last
color.  Let $x$ be the color of odd multiplicity and
$S^x = S - x$; it is all even, so the words ending in $x$ ---
exactly the words of $S^x$ with $x$ appended --- are cycled by the
previous theorem.  For every other color $z$, the words ending
in $z$ are the words of $S^z$ with $z$ appended, and $S^z$
has exactly two odd multiplicities, so Stachowiak's theorem supplies
a cycle
--- except when $S^z$ is a permutation of $(2j,1,1)$, the two-odd
family with no Hamiltonian cycle.  That exception occurs exactly when
$S$ is a permutation of $(2j,2,1)$; this family is handled by its own
grid construction (Section~\ref{sec:family}).  In the general case
the blocks are glued along parallel pairs whose prescribed edge
inside the all-even part is read off its cycle.  Each other block is
entered at a \emph{shifted stutter}: an anchored word (one
with no inter-domino edge) that uses every color of $S^x$.  When
$S^x$ has at least three colors, it is therefore the only anchored
word in its hypercube (Lemma~\ref{one-odd:anchored}).  A degree argument forces one
edge of every Hamiltonian cycle at the shifted stutter, and its
partner edge can be demanded of the all-even cycle because it lies in
a hypercube of dimension at least four, which the all-even
construction glues only along rungs
(Proposition~\ref{one-odd:routing}).  The two theorems are not
mutually inductive: the one-odd theorem reduces to the all-even
theorem and to the inherited two-odd theorem, while the all-even
proof is self-contained, its only induction being inside
Lemma~\ref{tools:lemmaM}.  The only finite ingredients of the whole
proof are two explicit cycles of $28$ and $84$ words
(Appendix~\ref{appendix:certificates}).

\subsection{Provenance: how the proof was found}
\label{intro:provenance}

The proof has an unusual origin, recorded here because it illustrates
how the problem was solved.  For about half a year before it, the
project had pursued a different and much longer route to the same
case: a decomposition by trailing colors, the prescription of gluing
edges \emph{before} the block cycles were constructed, and, earlier
still, the merging of a cycle cover across class seams.  That route
required a large case analysis and several finite certificates of
considerable size.  That route is not published, and nothing of it is
used here.

In September 2026 the problem was re-examined with a deliberately
different brief: find a short and elegant proof, study the failure
patterns, and survey representations of the problem that had not been
tried.  In that study phase, Claude Opus~5.5 proposed the
representation that became the partition into hypercubes: a
word viewed as a sequence of dominoes.  It noticed that the even-position swaps
generate a group whose orbits are hypercubes, that the stutters are
exactly the fixed points, and that gluing the Gray cycles of the
orbits already produced Hamiltonian cycles on small all-even
instances.  It then developed this lead into the two theorems above,
in successive multi-agent rounds: writing, independent recomputation
of every construction, adversarial red-teaming, and hand refereeing.
That proof is the source of the proof text of
Sections~\ref{sec:all-even}--\ref{sec:family}; this article
re-derives it in the article's notation, subjects it to the
independent machine verification of
Appendix~\ref{appendix:machine-verification}, and formalizes it in
Lean~4 (Appendix~\ref{appendix:formalization}).  The contrast between
the two routes --- one idea about a representation against months of
case analysis --- is the main methodological lesson of the project.

\subsection{Relation to prior work}

The components of the proof have clear antecedents.
The parity-balancing involution that makes the stutters the parity
surplus is due to Ruskey \cite{ruskey1992} and reappears in
Stachowiak \cite{stachowiak1997} and Verhoeff
\cite[Theorem~4]{verhoeff2017}.  The gluing of two cycles along a
parallel pair is Stachowiak's device \cite[Lemma~3]{stachowiak1992},
recorded by Verhoeff \cite[Lemma~11]{verhoeff2017}; iterated along a
spanning tree it is the pattern behind
Conway--Sloane--Wilks \cite{conway1989} and, in modern form,
Gregor--M\"utze--Nummenpalo \cite{gregor2018}.  The boustrophedon
cycle of a hypercube is the product construction of
Conway--Sloane--Wilks \cite{conway1989} and Verhoeff's
Lemma~12 \cite{verhoeff2017}; the hypercubes themselves are orbits of the
parabolic subgroup generated by the even swaps (parabolic double
cosets \cite{billey2018}).  Splitting by the last domino or the last
color is Verhoeff's \cite[\S\S5.1--5.2]{verhoeff2017}, and the
arity-reduction of the two-odd case by fibers is Stachowiak's
\cite{stachowiak1992}.  The family $\sig{2j,2,1}$ was sketched by
Verhoeff \cite[\S5.3]{verhoeff2017} and attributed there to van
Duijnhoven \cite{vanduijnhoven2013}; the forced-edge insertion used
here is a small explicit completion.  As far as we know, the
following are new: the matching of odd swaps that is a spanning tree
of the hypercubes, with the free-word invariant (Lemma~\ref{tools:lemmaM}),
and the resulting all-even proof, which needs no finite certificate
or computer check (its only induction is inside
Lemma~\ref{tools:lemmaM}); the shifted-stutter
forcing of Section~\ref{sec:one-odd}, which lets arbitrary cycles of
the two-odd blocks be glued without prescribing any edge inside them;
and the two-odd certificate-free route through the inherited theorem
as it is consumed here.  Opperman's thesis \cite{opperman2024}
follows a different, case-based route, and no result of it is used.

\subsection{Machine verification and formalization}

Every construction in this article is implemented, in the same
generality as its proof, in the Python package \texttt{nswap}
distributed with this submission, and checked against brute-force
neighbor-swap graphs at small parameters:
Appendix~\ref{appendix:machine-verification} states the claims and
the exact windows.  The proof is additionally formalized in Lean~4
over Mathlib: the hypercube proof and the Stachowiak theorem are
formalized in separate packages, and the Lean development contains
the bridges that
assemble them into the main theorem, with no \texttt{sorry} and
standard axioms only (Appendix~\ref{appendix:formalization}).  The
formal development is documented statement by statement, with every
mathematical statement linked to its Lean declaration, so that it can
be ported to later toolchains and, where stable, offered to Mathlib.

\subsection{Organization}

Section~\ref{sec:preliminaries} fixes the vocabulary, Verhoeff's
reduction, the gluing lemma, and the partition into hypercubes, and
Section~\ref{sec:inherited} states the two inherited results.
Sections~\ref{sec:tools}--\ref{sec:family} carry out the proof
outlined above, and Section~\ref{sec:assembly} assembles the main
theorem.  The appendices give the two explicit cycles, the machine
verification record, and the Lean formalization.  A note at the end
explains the ornamental braid of the decorated edition.

\section{Preliminaries}
\label{sec:preliminaries}
This section fixes the vocabulary --- signatures, words, and
the neighbor-swap graph --- and develops the two structures the proof
is built on: the stutter words, with Verhoeff's reduction of
Lehmer's conjecture, and the partition into hypercubes, with the
gluing lemma used to assemble the cycles.
\subsection{Signatures, words, and the neighbor-swap graph}
\label{prelim:signatures}

A \emph{signature} is a tuple $S = (k_0, k_1, \ldots, k_{K-1})$ of
non-negative integers, with $K = K(S)$ the \emph{arity} and
$n = \sum_{c=0}^{K-1} k_c$ the \emph{length}.  A \emph{word} of
signature $S$ is a sequence $p = p_0 p_1 \cdots p_{n-1}$ of colored
objects in which color $c$ occurs $k_c$ times, for every color $c$.
As in the introduction, the colored objects are the nonnegative
integers, and we say \emph{color} for a colored object.
A color of positive multiplicity is \emph{active}, and the active
colors form the \emph{support} of $S$.  There are
$n!/\prod_c k_c!$ words.  For a color $c$ we write
$S^c = S - c$ for the signature with $k_c$ decreased by one (defined
whenever $k_c \ge 1$); more generally, $S - u$ for a word $u$ denotes
the signature with the colors of $u$ removed, and
$S - a - b - \cdots$ denotes iterated subtraction.  Such an expression
is defined only when no multiplicity becomes negative.  Renaming the
colors is an isomorphism of everything below, so statements about a
signature hold for every renaming of its colors.

\emph{Positions are numbered from~$0$}.  For $0 \le i < n-1$, let
$s_i(p)$ be the word obtained from $p$ by exchanging the colors at
positions $i$ and $i+1$; we call this a \emph{swap} (at position $i$),
and it is a valid neighbor swap exactly when the two colors differ.
The \emph{neighbor-swap graph} $G(S)$ has as its vertex set $V(S)$,
the words of~$S$, with an edge between $p$ and $s_i(p)$ for every valid
swap; we write $p \swap p'$ for such an edge.

The positions are grouped into \emph{dominoes}: the two-position
containers $(0,1)$, $(2,3)$, \ldots; domino $i$ consists of positions
$2i$ and $2i+1$ and \emph{carries} the two colors that occupy its
cells (if $n$ is odd the last position is left over).  The
\emph{shifted dominoes} are the same containers shifted one position:
shifted domino $i$ consists of positions $2i+1$ and $2i+2$.  A domino
or shifted domino is \emph{mixed} if the two colors it carries differ
and a \emph{double} otherwise.  A swap $s_i$ with even $i$
\emph{flips} the domino it lies in: it exchanges the two colors that
domino carries, and is called an \emph{intra-domino} (or \emph{even})
swap.  A swap with odd $i$ flips the shifted domino it lies in: the
two colors it exchanges belong to two different dominoes, so it
changes both, and it is called an \emph{inter-domino} (or \emph{odd})
swap.  An edge produced by $s_i$ inherits the name \emph{even} or
\emph{odd} from its swap.

The graph $G(S)$ is bipartite: the inversion count (the number of
pairs of positions whose colors are out of their natural order)
changes parity under every swap.  Its two classes are the words of
even and of odd inversion count.

\subsection{Stutter words}
\label{prelim:stutter}

A \emph{stutter}, or \emph{stutter word}, is a word in which every
domino is a double; that is, it has the form $a a b b \cdots y y$,
with a possibly trailing single color $z$ when $n$ is odd.  (In the
literature this is called a \emph{stutter permutation}.)  Let
$N(S) \subseteq G(S)$ be the subgraph induced by the non-stutter
words.  Renaming colors maps stutters to stutters, so $N(S)$ and its
statements transport under every renaming of the colors.

\begin{lemma}[Parity census; Ruskey]
\label{prelim:stutter-parity}
For every signature $S$, the graph $G(S)$ is bipartite with classes
the words of even and of odd inversion count.  Every stutter lies in
the even class, and
\[
  \#\{p : \operatorname{inv}(p) \text{ even}\}
  - \#\{p : \operatorname{inv}(p) \text{ odd}\}
  = \#\{\text{stutters of } S\}.
\]
In particular the two bipartition classes of $N(S)$ have equal size.
\end{lemma}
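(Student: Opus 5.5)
The plan is to prove the three claims in order: bipartiteness, stutters are even, and the census via a sign-reversing involution.

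\emph{Bipartiteness.} A valid swap $s_i$ exchanges two distinct adjacent colors. It toggles whether that one pair is inverted and leaves the relative order of every other pair of positions unchanged. So $\operatorname{inv}$ changes by exactly $\pm 1$, and every edge joins the even and odd classes.

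\emph{Stutters are even.} In a stutter $aabb\cdots yy(z)$, every inversion pairs a position in some domino with a position in a later domino or in the trailing cell. For two doubles $aa$ and $bb$ with $a>b$, all four cross pairs are inversions, an even number. For a double $aa$ against the trailing $z$, the count is $0$ or $2$. Hence $\operatorname{inv}$ is even.

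\emph{Census.} I will use Ruskey's involution on the non-stutter words. Given a non-stutter word $p$, let $d$ be its leftmost mixed domino and set $\iota(p)=s_{2d}(p)$, which flips that domino. This is a valid swap because the two colors differ. It changes no other domino and keeps domino $d$ mixed, so $\iota(p)$ has the same leftmost mixed domino. Therefore $\iota$ is a fixed-point-free involution on the non-stutter words. Since each application is one valid swap, it changes inversion parity, so it pairs even non-stutters with odd non-stutters bijectively. The difference between the even and odd counts over all words is therefore contributed only by the stutters, which are all even by the previous step. This gives the displayed identity. The final sentence follows because the classes of $N(S)$ are the even and odd non-stutters, and $\iota$ matches them bijectively.

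The only point needing care is checking that $\iota$ really is an involution. This holds because flipping domino $d$ leaves dominoes $0,\dots,d-1$ doubles and domino $d$ mixed, so the choice of $d$ is stable under $\iota$. The trailing single cell for odd $n$ lies in no domino, so it never affects whether a word is a stutter.
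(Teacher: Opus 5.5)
Your proposal is correct and follows the paper's proof essentially step for step. You use the same three parts: the $\pm 1$ change in inversion count for bipartiteness, the $0$/$4$ and $0$/$2$ contributions of pairs of doubles and of a double with the trailing single, and Ruskey's involution that flips the first mixed domino.
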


\begin{proof}
A swap transposes two adjacent \emph{distinct} colors and changes
the inversion count by exactly $\pm 1$; hence every edge of $G(S)$
joins the two parity classes.

A stutter has even inversion count.  Its positions fall into its
doubles and, when $n$ is odd, the trailing single.  The two positions
of one double hold equal colors and form no inversion.  For two
different doubles $cc$ and $dd$, the four pairs of positions with one
position in each are all inversions if the earlier double carries the
larger color, and none are otherwise; so the two doubles contribute
$0$ or $4$.  Likewise a double and the trailing single contribute $0$
or $2$.

For the census, define $\iota(p)$, for a non-stutter $p$, as the word
obtained by transposing the colors of the \emph{first} mixed domino.
The first mixed domino of $\iota(p)$ is the same domino (earlier
dominoes are untouched and double), so $\iota$ is an involution on the
non-stutters, and it transposes two adjacent distinct colors, so it
flips the inversion parity.  The non-stutters therefore cancel in the
signed count, leaving one $+1$ for each stutter; removing the
stutters, all in the even class, balances the classes of $N(S)$
exactly.
\end{proof}

The balance clause is a necessary condition for the cycles this
article constructs: a Hamiltonian cycle of a bipartite graph
alternates between its two classes, so those classes must be equal in
size.  The involution of the proof is due to Ruskey
\cite{ruskey1992}; it reappears in Stachowiak \cite{stachowiak1997}
and in Verhoeff \cite[Theorem~4]{verhoeff2017}, where its fixed
points --- the stutters --- are used as the spurs of Lehmer's
conjecture.

\begin{lemma}[Stutters are far apart; Verhoeff
{\rm\cite[Lemma~6(6)]{verhoeff2017}}]\label{prelim:stutter-distance}
Two distinct stutters are at distance at least $4$ in $G(S)$.  In
particular, no word is adjacent to two distinct stutters.
\end{lemma}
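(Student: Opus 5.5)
Two distinct stutters $p \neq q$ of the same signature $S$ must be at distance at least $4$; I would obtain this from a parity argument together with a small local count.

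\emph{Step 1 (parity).} By Lemma~\ref{prelim:stutter-parity}, every stutter lies in the even inversion class and $G(S)$ is bipartite. Hence the distance between $p$ and $q$ is even, so it suffices to exclude distance $2$.

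\emph{Step 2 (no common neighbor).} Suppose a word $w$ is adjacent to both, say $w = s_i(p) = s_j(q)$ with $p \neq q$. In that case $i \neq j$, since $s_i(p) = s_i(q)$ would force $p = q$.

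\emph{Step 3 (local difference).} The word $p$ differs from $w$ exactly at positions $i$ and $i+1$, and $q$ differs from $w$ exactly at positions $j$ and $j+1$. These two sets of positions are distinct, so $p$ and $q$ differ somewhere. I would then split by the parity of $i$.

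\begin{itemize}
\item \emph{Case $i$ even.} The swap $s_i$ flips a domino of $p$, but every domino of $p$ is a double, so $s_i(p) = p$. This is not a valid swap, a contradiction. The same reasoning shows $j$ must be odd.
\item \emph{Case $i$ odd.} Now $w$ is $p$ with the colors at positions $i$ and $i+1$ exchanged. Write the affected dominoes of $p$ as $aa$ and $bb$ with $a \neq b$. Then $w$ contains the adjacent dominoes $ab$ and $ab$, and all other dominoes of $w$ are doubles.
\end{itemize}

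So $w$ has exactly two mixed dominoes, they are adjacent, and both carry the same pair $a,b$ in the same order $ab$. The same description applies to $j$ relative to $q$. Since the mixed dominoes of $w$ occupy positions $i-1, i, i+1, i+2$ and $j$ must be the odd position inside that block, we get $j = i$. This contradicts $i \neq j$, so distance $2$ is impossible.

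\emph{Step 4 (conclusion).} Distance $0$ is excluded because $p \neq q$, and the odd distances $1$ and $3$ are excluded by parity. Therefore the distance is at least $4$. The "in particular" clause follows directly, since a word adjacent to two distinct stutters would put them at distance at most $2$.

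The only real content is the identification in Step 3 of the odd swap position from the shape of $w$. Everything else is the parity bookkeeping already supplied by Lemma~\ref{prelim:stutter-parity}.
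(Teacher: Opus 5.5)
Your route differs from the paper's and works in the interior case, but Step~3 omits a case. When $n$ is odd, the odd swap $s_{n-2}$ exchanges the second cell of the last domino with the trailing single color, so there is no second domino $bb$. From $p = \cdots aa\,b$ you get $w = s_{n-2}(p) = \cdots ab\,a$. This word has exactly \emph{one} mixed domino, the last, and your block of positions $i-1,\ldots,i+2$ runs past the end of the word. For example, in $(2,2,1)$ the stutter $00112$ goes to $00121$. So ``$w$ has exactly two mixed dominoes'' is false here, and $j=i$ is not justified, in particular when one of $i,j$ is interior and the other is $n-2$.

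The repair is short. An interior odd swap ($i \le n-3$) of a stutter produces exactly two mixed dominoes, $(i-1)/2$ and $(i+1)/2$. The trailing swap $s_{n-2}$ produces exactly one, the last. Hence the set of mixed dominoes of $w$ determines the odd index of any swap taking $w$ to a stutter, and $i=j$ in all cases. Phrase it this way rather than ``$j$ is the odd position inside that block'': the block $i-1,\ldots,i+2$ contains two odd positions, $i$ and $i+2$. What forces $j=i$ is that the blocks for $i$ and for $j$ must coincide.

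With that case added, your proof is complete. The paper argues differently. It labels the occurrences of each color and lets $\delta(p,q)$ count the pairs of labeled occurrences whose relative order differs. Each swap changes $\delta$ by exactly one, so the distance is at least $\delta$. For distinct stutters, each pair of doubles contributes $0$ or $4$ and the trailing position contributes nothing, so $\delta \ge 4$. That argument needs neither the parity census nor a case split at the trailing position, and it gives the quantitative bound (distance at least $\delta$).

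Your argument instead uses Lemma~\ref{prelim:stutter-parity} to reduce everything to excluding a common neighbor, then analyzes the shape of that neighbor. This is elementary and local. The cost is that boundary configurations such as the trailing single must be handled by hand.
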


\begin{proof}
Label the occurrences of each color $c$ in a word from left to right
as $c_1, c_2, \ldots, c_{k_c}$.  A swap exchanges two adjacent
occurrences of distinct colors, so it keeps every label and reverses
the relative order of exactly one pair of labeled occurrences.  For
words $p$ and $q$ let $\delta(p,q)$ be the number of pairs of labeled
occurrences whose relative order differs between $p$ and $q$.  A swap
changes $\delta(\cdot, q)$ by exactly one, so the distance between $p$
and $q$ is at least $\delta(p,q)$.

Let $s \ne s'$ be stutters.  In both, the occurrences $c_{2t-1},
c_{2t}$ of each color $c$ fill one double, and when $n$ is odd the
trailing position holds the last occurrence of the odd color.  Two
occurrences of the same color, or two involving the trailing
position, have the same relative order in $s$ and $s'$.  The
occurrences of two different doubles are ordered as the doubles are,
so each pair of doubles contributes $0$ or $4$ to $\delta(s,s')$.
Since $s \ne s'$, some pair of doubles is ordered differently, and
$\delta(s,s') \ge 4$.
\end{proof}

\subsection{From Lehmer's conjecture to the non-stutter graph}
\label{prelim:restriction-section}

A \emph{Lehmer path} in $G(S)$ is a walk that visits every vertex at
least once, with each vertex visited exactly once \emph{except}
possibly some \emph{spur bases}, which are visited twice in order to
reach a \emph{spur tip} (a vertex of distance $1$ from the walk) and
return.  Lehmer's permutation conjecture asserts that every $G(S)$
admits a Lehmer path.  Verhoeff \cite{verhoeff2017} proposed the
stutters as spur tips and reduced the conjecture as follows.

\begin{theorem}[Verhoeff {\rm\cite[cf.~Thm.~7]{verhoeff2017}}]%
\label{prelim:restriction}
If $N(S)$ admits a Hamiltonian path or cycle, then $G(S)$ admits a
Lehmer path with spur tips among the stutters.
\end{theorem}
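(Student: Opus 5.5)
The plan is to take a Hamiltonian path (or cycle) $P$ of $N(S)$ and attach every stutter to it as a spur. If $N(S)$ has a Hamiltonian cycle, we first delete one edge to get a Hamiltonian path. Such a path visits every non-stutter exactly once. So it suffices to show that every stutter $t$ has at least one non-stutter neighbor $b(t)$. We will use $b(t)$ as the spur base: at the moment the walk reaches $b(t)$, it steps to $t$ and comes back.

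The first step is the existence of a non-stutter neighbor. Let $t = aabb\cdots$ be a stutter that is not the only word of $S$. Then $t$ has some valid swap, and every valid swap of a stutter is odd. An even swap would flip a double, which is not a valid swap. So a valid swap $s_{2i+1}$ exchanges the last color of domino $i$ with the first color of domino $i+1$, and these colors differ. The result has domino $i$ mixed, so it is not a stutter. The same conclusion can be read off Lemma~\ref{prelim:stutter-distance}, since distinct stutters are at distance at least $4$.

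One degenerate case needs attention. If $t$ has no valid swap at all, then $S$ has a single active color and $G(S)$ is one vertex. That case falls outside the hypothesis: either $N(S)$ is empty and there is no path, or we read it as a trivial walk. Under the standing assumption of arity at least $2$, every word has a valid swap.

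The second step is building the walk. By Lemma~\ref{prelim:stutter-distance}, no word is adjacent to two distinct stutters. Hence the chosen bases $b(t)$ are pairwise distinct, and each base carries at most one spur, so every spur base is visited exactly twice. Writing $P = v_1 v_2 \cdots v_m$, the walk is obtained by replacing each $v_j = b(t)$ with $v_j\, t\, v_j$. This walk is a valid walk in $G(S)$ because $t \swap b(t)$. It visits every non-stutter, and every spur base is visited exactly twice. It also visits every stutter, each exactly once, as a spur tip at distance $1$ from $P$.

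The only delicate point is bookkeeping. We must confirm that no vertex is visited more than twice and that spur tips are not themselves used as bases; the distinctness of the bases handles this. A second point arises if the definition requires the tip to lie off the path: the tips are stutters and $P$ avoids them, so this holds. We must also treat the case where $N(S)$ has no vertices (for example $S=(2,0,\ldots)$, or a signature whose only word is a stutter) separately as trivial. I expect no real obstacle beyond stating these conventions consistently with the definition of a Lehmer path.
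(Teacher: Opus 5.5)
Your proposal is correct and is essentially the paper's argument. Each stutter has a non-stutter neighbor: a valid swap of a stutter must flip a shifted domino and leave domino $i$ mixed, or, as you note, any neighbor of a stutter is a non-stutter by Lemma~\ref{prelim:stutter-distance}. The same lemma ensures that no chosen base carries two spurs, so splicing the excursions $b(t)\,t\,b(t)$ into the Hamiltonian path or cycle gives the Lehmer path.

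One cosmetic slip: for odd $n$, the swap $s_{n-2}$ exchanges the second color of the last domino with the trailing single, not with a domino $i+1$. This changes nothing in your conclusion.
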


\begin{proof}[Proof sketch]
A stutter of arity at least two is adjacent in $G(S)$ to a non-stutter:
either two consecutive dominoes carry different colors, or the last
domino and the trailing color differ; flipping that shifted domino
gives a non-stutter.  Choose for every stutter one such
adjacent non-stutter and attach the stutter to it as a spur tip.  By
Lemma~\ref{prelim:stutter-distance} no non-stutter is adjacent to two
stutters, so each chosen non-stutter carries exactly one spur.
Traversing the Hamiltonian path or cycle of $N(S)$ and making an
excursion to each attached stutter therefore visits every stutter
exactly once and every spur base exactly twice.
\end{proof}

So it suffices to establish Hamiltonicity of $N(S)$: a cycle outside
the exceptional families, and a path at the two families whose
non-stutter graphs admit no cycle
(Section~\ref{prelim:trichotomy-section}).

\subsection{The reformulated conjecture, and its two exceptions}
\label{prelim:trichotomy-section}

Verhoeff \cite{verhoeff2017} reformulated Lehmer's conjecture as a
Hamiltonicity statement for the non-stutter graph and identified the
two families in which no Hamiltonian cycle can exist.

\begin{conjecture}[Reformulated Lehmer conjecture; Verhoeff
{\rm\cite[Conjecture~8]{verhoeff2017}}]\label{prelim:trichotomy}
Let $S$ be a signature of arity at least $2$ that is neither a binary
signature with an odd multiplicity nor a permutation of $\sig{2k,1,1}$.  Then $N(S)$ admits a Hamiltonian cycle.
\end{conjecture}

This article proves Conjecture~\ref{prelim:trichotomy}
(Theorem~\ref{assembly:theorem}).  The two excluded families do admit
a Hamiltonian path but no Hamiltonian cycle --- the binary family by
Theorem~\ref{inherited:binary-theorem}, the family $\sig{2k,1,1}$ by
the next proposition --- so the exception set is exact:

\begin{proposition}[$\sig{2k,1,1}$ is path-only]
\label{prelim:path-only-2k11}
For every $k \ge 1$, $N(2k,1,1)$ admits a Hamiltonian path but no
Hamiltonian cycle.
\end{proposition}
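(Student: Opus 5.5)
The plan is to exploit that $\sig{2k,1,1}$ has \emph{no} stutters: colors $1$ and $2$ have multiplicity one, so neither can sit in a double. Hence $N(2k,1,1) = G(2k,1,1)$. By Lemma~\ref{prelim:stutter-parity} its two bipartition classes have equal size, so parity alone does not rule out a cycle. I would therefore run a finer parity count on one side of an edge cut, following Stachowiak's Lemma~12.

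\emph{Coordinates.} A word is determined by the ordered pair $(a,b)$ of positions of colors $1$ and $2$, with $a \ne b$ in $\{0,\ldots,n-1\}$ and $n = 2k+2$. An edge either moves $a$ or $b$ by one step across a $0$, or is the swap exchanging an adjacent $12$ with $21$, that is, $(i,i+1)\leftrightarrow(i+1,i)$. Let $A$ be the words with $a<b$ and $B$ those with $a>b$. The only edges between $A$ and $B$ are these $n-1 = 2k+1$ swaps. Each edge has its own boundary vertex in $A$, namely the word with $12$ at positions $i,i+1$, so there are exactly $2k+1$ boundary vertices.

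\emph{No cycle.} Identifying $1$ and $2$ with a single color $c$ maps $A$ isomorphically onto $G(2k,2)$, stutters included. This map preserves inversion counts, because $1<2$ creates no inversion. Lemma~\ref{prelim:stutter-parity} applied to $\sig{2k,2}$ then says that the even words of $A$ outnumber the odd ones by the number of stutters of $\sig{2k,2}$. That number is $k+1$: the double $cc$ can occupy any of the $k+1$ dominoes. A boundary word with $12$ at positions $i,i+1$ has inversion count $2\cdot\#\{\text{zeros after}\}$, which is even, so every boundary vertex is even. Now suppose $H$ is a Hamiltonian cycle. It cannot lie inside $A$, so $H\cap A$ is a disjoint union of paths, each with both ends at boundary vertices. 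A path of one vertex is impossible, since a boundary vertex has only one cut edge. An alternating path has an even-minus-odd surplus of $1$ if both ends are even, and of $0$ or $-1$ otherwise. Since all ends are even, the total surplus $k+1$ forces exactly $k+1$ paths and hence $2(k+1)$ distinct boundary endpoints. But only $2k+1$ boundary vertices exist, a contradiction.

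\emph{A Hamiltonian path.} I would build the path explicitly in the $(a,b)$ coordinates. The set $A$ is a staircase triangle in the grid: the $(a,b)$ grid graph restricted to $a<b$, where moving $a$ or $b$ is a step of the grid. The swap edges join the diagonal cells of $A$ to those of its mirror image $B$. First, traverse $A$ boustrophedon, row by row in $b$, sweeping $a$ back and forth, so that the traversal ends on the diagonal at $(n-2,n-1)$. Second, cross by the swap edge to $(n-1,n-2)$ in $B$. Third, traverse $B$ by the mirror-image boustrophedon. The main obstacle is to arrange the snake so that it really ends at a diagonal cell and covers each triangular row exactly once; the row lengths vary, and the parity of their lengths must be handled. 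The counting above shows a path must start and end at even vertices in $A$ (or the mirror statement), which is consistent with this construction. I would verify the endpoints for small $k$ and then argue the boustrophedon pattern uniformly in $k$.
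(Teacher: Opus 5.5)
Your no-cycle half is correct and is essentially the paper's argument. You use the same cut between $A$ and $B$, the same inversion-preserving identification of $A$ with the words of $\sig{2k,2}$, the surplus $k+1$ from its stutters, and the fact that all cut ends are even. The paper counts the $2k+2$ cut edges a cycle would need; you count $2k+2$ distinct boundary endpoints. These are equivalent, since each boundary vertex carries exactly one cut edge.

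The gap is in the existence half, and it is not a matter of handling row lengths: the path you propose cannot exist. You computed that the even words of $A$ outnumber the odd ones by $k+1 \ge 2$. A path in a bipartite graph alternates, so its two classes differ in size by at most one. Hence $A$ has no Hamiltonian path, and no walk can cover all of $A$, cross once, and then cover $B$. Already for $k=1$, $A$ has four even words and two odd ones.

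Your closing remark also misreads the count. Run it for a Hamiltonian path that uses $c$ cut edges and has $e_A$ ends in $A$ and $e_B$ ends in $B$, with $e_A+e_B=2$. Since $G(2k,1,1)$ has no stutters, its two classes are equal, so $B$ has $k+1$ more odd than even words, and its cut ends are odd. The count on each side gives $2k+2 \le c+e_A$ and $2k+2 \le c+e_B$. With $c \le 2k+1$ this forces $c=2k+1$ and $e_A=e_B=1$: every Hamiltonian path uses all $2k+1$ cut edges and has one end on each side.

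The paper does not construct the path; it cites Verhoeff \cite[\S5.4]{verhoeff2017} and Stachowiak's main theorem \cite{stachowiak1992}, and you may do the same. For an explicit path, use the grid of Section~\ref{sec:family} with $N=2k+2$, and the covers of Lemma~\ref{family:gamma} with nothing deleted.
\begin{itemize}
\item In $\Delta^+$, take the $k$ U-turns that run from $d_{2i}$ along row $2i$ to column $N-1$ and back along row $2i+1$ to $d_{2i+1}$, plus the one-cell path $d_{2k}$.
\item In $\Delta^-$, take the transposes of three kinds of paths: the path from $d_0$ along row $0$ and down column $N-1$ to $d_{2k}$; the U-turns from $d_{2i-1}$ to $d_{2i}$ turning at column $N-2$, for $1\le i\le k-1$; and the one-cell path $d_{2k-1}$.
\end{itemize}
The end pairings are $\{0,1\},\ldots,\{2k-2,2k-1\}$ on one side and $\{0,2k\},\{1,2\},\ldots,\{2k-3,2k-2\}$ on the other. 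They chain as $2k-1,2k-2,\ldots,1,0,2k$. So together with all $2k+1$ edges $d_i d'_i$ they form a Hamiltonian path from $d'_{2k-1}$ to $d_{2k}$.
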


\begin{proof}
The signature has two odd multiplicities, so it has no stutters,
and $N(2k,1,1)$ equals $G(2k,1,1)$.  A Hamiltonian path is constructed by
Verhoeff \cite[\S5.4]{verhoeff2017}; the construction is also
recovered by the general path theory of
Stachowiak \cite{stachowiak1992}, whose main theorem covers every
signature with at least two odd multiplicities.

The absence of a Hamiltonian cycle is Stachowiak's
Lemma~12 \cite{stachowiak1992}; we give a short parity argument.  Let
$A$ be the set of words in which $1$ precedes $2$, and $B$ the set in
which $2$ precedes $1$.  The only edges between $A$ and $B$ are the
$2k+1$ swaps $0^i 1 2 0^{2k-i} \swap 0^i 2 1 0^{2k-i}$,
$0 \le i \le 2k$.  Their ends in $A$ have inversion count
$2(2k-i)$, so they all lie in the even class.  Replacing the $2$ by
a $1$ maps $A$ bijectively onto the words of $\sig{2k,2}$ and
preserves the inversion count, because within $A$ the $1$ and the
$2$ form no inversion.  By Lemma~\ref{prelim:stutter-parity}, $A$
therefore has $k+1$ more words in the even class than in the odd
class, one for each stutter of $\sig{2k,2}$ (the double $11$ can
occupy any of its $k+1$ dominoes).  Suppose a Hamiltonian
cycle uses $2t$ edges between $A$ and $B$.  It then covers $A$ by $t$
vertex-disjoint paths, each with both ends in $A$ on such an edge,
hence both ends even.  A path in a bipartite graph whose ends lie in
the same class has exactly one more vertex in that class than in the
other, so $t = k+1$.  The cycle would need $2k+2$ edges between $A$
and $B$, but there are only $2k+1$.  The impossibility halves of this proposition and of
Theorem~\ref{inherited:binary-theorem} are used only to show that the
exception set of Theorem~\ref{assembly:theorem} is minimal; the
constructions of this article consume only the existence halves.
\end{proof}

The remaining ingredient from the literature is Stachowiak's theorem
at two or more odd multiplicities, stated in
Section~\ref{sec:inherited} with Verhoeff's binary classification.  With two or more odd
multiplicities a signature has no stutters, so $N(S) = G(S)$ there.

\subsection{Parallel edges, gadgets, and gluing along a tree}
\label{prelim:parallel-section}

\begin{definition}[Parallel edges]\label{prelim:parallel-edges}
In $G(S)$, edges $a \swap b$ and $c \swap d$ are \emph{parallel} if
also $a \swap c$ and $b \swap d$ are edges.  The edges $a \swap c$ and
$b \swap d$ are then called the \emph{cross edges} of the parallel
pair, and the
four vertices $a,b,c,d$ with the four edges form a $4$-cycle.
\end{definition}

\begin{lemma}[Gluing a parallel pair; Verhoeff {\rm\cite[Lemma~11]{verhoeff2017}},
Stachowiak {\rm\cite{stachowiak1992}}]\label{prelim:gluing-lemma}
Let $C_1$ and $C_2$ be vertex-disjoint cycles in $G(S)$ carrying
parallel edges $a \swap b \in C_1$ and $c \swap d \in C_2$.  Then
replacing these two edges by the cross edges $a \swap c$ and
$b \swap d$ yields a single cycle on $V(C_1) \cup V(C_2)$.
\end{lemma}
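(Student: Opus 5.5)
The argument is purely combinatorial and uses nothing about $G(S)$ beyond the edges named in the statement. I will write each cycle as a closed sequence of vertices and do the splice explicitly.

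\emph{Opening the two cycles.} Since $a \swap b$ is an edge of $C_1$, deleting it leaves a Hamiltonian path $P_1$ of $V(C_1)$ with ends $a$ and $b$. Likewise, deleting $c \swap d$ from $C_2$ leaves a Hamiltonian path $P_2$ of $V(C_2)$ with ends $c$ and $d$. (If $C_1$ is a $2$-cycle, i.e.\ a single edge traversed twice, $P_1$ is just that edge; the argument below is unchanged.)

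\emph{Closing up.} Form the closed walk that traverses $P_1$ from $b$ to $a$, takes the cross edge $a \swap c$, traverses $P_2$ from $c$ to $d$, and returns along the cross edge $d \swap b$. Both cross edges exist by Definition~\ref{prelim:parallel-edges}. Because $V(C_1)$ and $V(C_2)$ are disjoint, every vertex of $V(C_1)\cup V(C_2)$ occurs exactly once on this walk, so it is a cycle. Equivalently, its edge set is $E(C_1)\cup E(C_2)$ with $a\swap b$ and $c\swap d$ removed and $a\swap c$, $b\swap d$ added.

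\emph{The one point to check.} The only way this could fail is by pairing the wrong ends, joining $a$ to $d$ and $b$ to $c$; but the definition of parallel edges supplies exactly the pairing $a$--$c$, $b$--$d$ that the construction uses. Because the cycles are vertex-disjoint, no cross edge can coincide with an edge already on either cycle, and no vertex is repeated. Hence there is no genuine obstacle, and the proof reduces to this bookkeeping.
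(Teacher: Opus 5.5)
Your proof is correct and takes the same approach as the paper's: delete $a \swap b$ and $c \swap d$ to get Hamiltonian paths of $C_1$ and $C_2$, then splice them with the two cross edges. One correction to your closing remark: the crossed pairing $a \swap d$, $b \swap c$ would also close the two paths into a single cycle (only re-adding $a \swap b$ and $c \swap d$ gives two cycles), so what the definition of parallel edges actually supplies is the existence of the two joining edges, not protection against a bad pairing.
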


\begin{proof}
$C_1$ minus $a \swap b$ is a Hamiltonian path in $C_1$ from $a$ to
$b$, and $C_2$ minus $c \swap d$ is one in $C_2$ from $c$ to $d$.
Concatenating $a \swap c$, the path from $c$ to $d$ in $C_2$,
$d \swap b$, and the path from $b$ to $a$ in $C_1$ gives a cycle on
all vertices of both.
\end{proof}

Iterating the gluing along a tree of parallel pairs yields the form in
which this article uses it.  Partition the vertex set of a graph into
\emph{units}, and let each unit $U$ carry a Hamiltonian cycle $H_U$ of
the subgraph it induces; an edge lying inside a unit is a \emph{unit
edge}.  A unit of two vertices, a \emph{digon}, carries its edge as a
doubled edge.  A \emph{gadget} is a $4$-cycle with two unit edges,
lying in two different units and on the cycles $H_U$ of those units,
and two \emph{cross edges}, each joining the two units; its unit edges
are pairs glued by the gluing lemma, its cross edges the edges added.
A family of gadgets forms a \emph{tree on the units} when, viewing
each gadget as an edge between its two units, they form a spanning
tree of the units.

\begin{lemma}[Gluing along a tree]\label{prelim:tree-gluing}
Let a graph have its vertex set partitioned into units, each carrying
a Hamiltonian cycle $H_U$.  Let $\mathcal{G}$ be a family of gadgets
forming a tree on the units, such that the unit edges of all gadgets
are pairwise distinct and a digon carries at most one of them.  Then
\[
  \bigcup_U H_U \;-\; \{\text{all unit edges}\} \;+\;
  \{\text{all cross edges}\}
\]
(as multisets; a digon with a unit edge contributes no remaining
edge) is a Hamiltonian cycle of the graph.
\end{lemma}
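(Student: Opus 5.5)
We prove the lemma by induction on the number $m$ of units, peeling off a leaf of the tree and applying Lemma~\ref{prelim:gluing-lemma} once per gadget. To make the induction go through, we strengthen the statement: under the hypotheses, the multiset
\[
  H \;=\; \bigcup_U H_U - \{\text{unit edges}\} + \{\text{cross edges}\}
\]
is a Hamiltonian cycle of the union of the units. Moreover, every edge of some $H_U$ that is not a unit edge of any gadget survives in $H$. When $m=1$ there are no gadgets, so $H = H_U$ and both claims hold.

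For the inductive step with $m \ge 2$, choose a leaf unit $L$ of the tree. Let $g$ be the unique gadget at $L$, with unit edge $a \swap b$ in $L$, unit edge $c \swap d$ in its neighbor $U$, and cross edges $a \swap c$ and $b \swap d$. Delete $L$ and $g$. The remaining gadgets form a tree on the remaining $m-1$ units, and they still satisfy the distinctness and digon hypotheses. By the induction hypothesis they produce a Hamiltonian cycle $H'$ of the remaining units.

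The edge $c \swap d$ is a unit edge of $g$ only. Since unit edges of distinct gadgets are pairwise distinct, it is not a unit edge of any remaining gadget, so by the strengthened clause it lies on $H'$. The edge $a \swap b$ lies on $H_L$, because it is a unit edge of $g$. Now $H'$ and $H_L$ are vertex-disjoint cycles, and they carry the parallel edges $c \swap d$ and $a \swap b$. Lemma~\ref{prelim:gluing-lemma} therefore yields a single cycle on all the vertices. That cycle is exactly $H' - \{c\swap d\} + H_L - \{a\swap b\} + \{a\swap c,\ b\swap d\}$, which equals $H$ as a multiset. Any edge of some $H_{U'}$ that is not a unit edge survives: such edges in $L$ were never removed, and those elsewhere survive in $H'$ and are not $c\swap d$. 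This proves the strengthened clause.

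The main obstacle is the treatment of digons, since multiset bookkeeping is needed there. A digon's cycle $H_U$ is its single edge taken twice. The hypothesis that a digon carries at most one unit edge means only one copy is ever removed. The remaining copy then sits on the glued cycle as an ordinary edge. In the gluing lemma, "$C_1$ minus $a\swap b$" is then the one-edge path from $a$ to $b$, and the argument goes through unchanged. If a digon were the leaf $L$, $H_L - \{a\swap b\}$ would be exactly the path $a\swap b$, which is consistent with the convention that a digon with a unit edge contributes no remaining edge beyond that path.

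We must also check that the edge $c\swap d$ in $H'$ has not been consumed. If $U$ is a digon, it carries no other unit edge by hypothesis, so both copies of its edge are still present in $H'$, and removing one is legitimate. If $U$ is not a digon, distinctness of unit edges guarantees $c\swap d$ is still present. The final multiset is a genuine simple cycle, except when the whole graph is a single digon. That degenerate case has $m=1$ and is covered by the base case.
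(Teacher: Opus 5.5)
Your proof is correct and is essentially the paper's argument: both apply the gluing lemma once per gadget along the tree, and both use the pairwise distinctness of unit edges, together with the one-unit-edge bound on digons, to guarantee that every edge still to be spliced survives the earlier splices. The only difference is bookkeeping: you peel off a leaf and induct on the number of units with a strengthened survival clause, while the paper adds the gadgets in arbitrary order and keeps the invariant that each component is spanned by a single cycle containing all later unit edges.
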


\begin{proof}
Add the gadgets in any order; in a tree, the two units of a gadget
are joined by no path of other gadgets, so they always lie in
different components.  Keep the invariant that every component of the
constructed subgraph is spanned by a single cycle, which contains
every unit edge of a later gadget inside it.  When the next gadget
with unit edges $e_1 = v_0 \swap v_1$ and $e_2 = v_2 \swap v_3$ joins
two components, delete $e_1$ from the cycle of the first and $e_2$
from the cycle of the second: each deletion turns a cycle into a
Hamiltonian path of that component, with ends $v_0,v_1$ respectively
$v_2,v_3$.  Adding the cross edges $v_1 \swap v_2$ and
$v_3 \swap v_0$ splices the two paths into one cycle.  Later unit
edges survive by distinctness; cross edges join different units, so
they lie on no $H_U$ and are never deleted.  A cross edge is added only once: a
gadget whose cross edge had been added earlier would have both its
units inside the component created by that earlier gadget, contrary
to the tree property.  At the end one cycle spans all units.
\end{proof}

The case of a single gadget is Lemma~\ref{prelim:gluing-lemma}.  The
lemma is used in two settings.  In the all-even case the units are the
hypercubes of Section~\ref{prelim:hypercubes}, each with a
boustrophedon cycle as its $H_U$ (Lemma~\ref{cube:boustrophedon}), and
the gadgets are the stitches of Section~\ref{prelim:hypercubes}.  In
the one-odd case and for the family $\sig{2j,2,1}$ the units are the
last-color blocks of Section~\ref{sec:one-odd}, and the gadgets are
the $4$-cycles of Lemma~\ref{one-odd:parallel}.  In the family one block is split
further, into a cycle and a digon (Proposition~\ref{family:prop});
that is the only digon in the article, and it carries exactly one
unit edge.

\subsection{Hypercubes: the orbits of the intra-domino swaps}
\label{prelim:hypercubes}

Throughout this subsection $S$ is a signature and $p$ a word of~$S$.
The \emph{content} of domino $i$ of $p$ is the ordered pair
$(p_{2i}, p_{2i+1})$ of the colors it carries.  The \emph{skeleton}
of $p$ records the content of each domino as an unordered pair of
colors; so $p$ is a stutter exactly when its skeleton consists of
doubles.  (The square and the $3$-cube of
Figure~\ref{intro:skeleton-figure} are examples of skeleton
classes.)

\begin{lemma}[Hypercubes]\label{cube:lemma1}
\mbox{}
\begin{enumerate}[label=\textup{(\alph*)},itemsep=0pt]
  \item The words with a given skeleton form a set of $2^r$
  words, where $r$ is the number of mixed dominoes.  The even
  edges make this set a copy of the hypercube $Q_r$ (the graph on the
  $2^r$ bit strings of length $r$, adjacent when they differ in one
  coordinate), with one coordinate per mixed domino, recording the order
  of its content.  We
  call the set a \emph{hypercube}, its coordinate directions the
  \emph{directions} of the hypercube, and $r$ its \emph{dimension}; the
  direction of a mixed domino $j$ is denoted $j$ as well.  Every odd
  edge joins two \emph{different} hypercubes.
  \item If all multiplicities of $S$ are even, no skeleton has exactly
  one mixed domino; hence $N(S)$ is the disjoint union of the hypercubes of
  dimension $r \ge 2$, together with the odd edges between them.
\end{enumerate}
\end{lemma}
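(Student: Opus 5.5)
For (a), I would fix a skeleton and describe its class explicitly. A word with this skeleton is determined by choosing, for each domino, an ordering of its unordered content. A double admits one ordering; a mixed domino $\{a,b\}$ with $a<b$ admits two, which I encode by a bit that is $0$ for $ab$ and $1$ for $ba$. If there is an odd trailing position, the skeleton forces its color, since it is the color left over from the multiset once the dominoes are accounted for. This gives a bijection between the class and $\{0,1\}^r$, so the class has $2^r$ words.

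Next I would identify the edges inside the class. An even swap $s_{2i}$ is a valid edge exactly when domino $i$ is mixed. It preserves the skeleton and toggles exactly bit $i$, so the even edges of the class are precisely the hypercube edges $Q_r$. Conversely, an odd swap $s_{2i+1}$ exchanges a color $a$ from domino $i$ with a distinct color $b$ from domino $i+1$. This removes one copy of $a$ from the content multiset of domino $i$ and adds $b$. Since $a\ne b$, the unordered content of domino $i$ changes, so the skeleton changes. If position $2i+2$ is instead the trailing single, the content of domino $i$ still changes in the same way. Hence every odd edge joins two different classes. I would add the remark that the skeleton partitions the words, so these classes really are disjoint, and that the edges of $G(S)$ are exactly the even and odd swaps.

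For (b), I would use a counting argument. For each color $c$, its multiplicity $k_c$ equals twice the number of doubles $cc$, plus the number of mixed dominoes containing $c$, plus $1$ if $c$ is the trailing color. With all multiplicities even, $n$ is even, so there is no trailing single. Now suppose exactly one domino were mixed, with content $\{a,b\}$. Then $k_a$ would be $2(\#aa)+1$, which is odd, a contradiction. So every class has dimension $0$ or at least $2$. By definition, the stutters are exactly the dimension-$0$ classes. Removing them leaves $N(S)$ as the disjoint union of the classes of dimension $r\ge2$. Within each such class the induced edges are exactly its hypercube edges by (a), and all remaining edges are odd edges running between classes.

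I do not expect any real obstacle here. The only point needing care is the argument that an odd swap changes the skeleton, including the edge case of a trailing position. The rest is direct from the definitions.
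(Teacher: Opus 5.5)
Your proposal is correct and follows essentially the same route as the paper. You identify the skeleton class with $\{0,1\}^r$ through the orders of the mixed dominoes, you observe that an odd swap changes the unordered content of the domino it touches, and for (b) you use the count $k_c = 2\#\{cc\} + \#\{\text{mixed dominoes containing } c\}$. You also treat the trailing single explicitly: the skeleton determines its color, and the swap at position $n-2$ still changes the skeleton. The paper's $\ldots ab \mid cd \ldots$ notation leaves that detail implicit.
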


\begin{proof}
(a) Words with the same skeleton differ only in the order of the two
colors within each mixed domino, and $s_{2i}$ flips domino $i$; the
content of each mixed domino can independently be ordered in two ways,
giving $2^r$ words, and the even swaps $s_{2i}$ at mixed dominoes are
the flips of the $r$ coordinates of a hypercube.  An odd swap at
position $2i+1$ exchanges two colors $b \ne c$ in
$\ldots a b \mid c d \ldots$ and replaces the domino contents
$\{a,b\},\{c,d\}$ by $\{a,c\},\{b,d\}$, and $\{a,c\} \ne
\{a,b\}$, so the skeleton changes and the swap leads to a different
hypercube.

(b) Every occurrence of a color $c$ lies either in a double $cc$ or in
a mixed domino containing $c$, so $k_c = 2 \cdot \#\{\text{doubles }cc\}
+ \#\{\text{mixed dominoes containing } c\}$.  If all $k_c$ are even,
every color lies in an even number of mixed dominoes, which is impossible
if there is exactly one mixed domino.  A non-stutter is a word
with at least one mixed domino; by what we just proved, in the all-even
case it has at least two, so it lies in a hypercube of dimension $r \ge 2$.
The hypercubes with $r = 0$ are the stutters, and those with $r = 1$ do not
occur; the even edges stay inside a hypercube and the odd edges join
different hypercubes, so the non-stutter graph is exactly the union of the
hypercubes with $r \ge 2$ plus the odd edges.
\end{proof}

\begin{definition}[Stitches]\label{cube:stitch}
Let shifted domino $i$ of $p$ be mixed (so $p_{2i+1} \ne p_{2i+2}$),
and let domino $j$ be mixed with $j \notin \{i, i+1\}$ (so domino $j$
is disjoint from the positions of the shifted domino).  Then the flips
$s_{2i+1}$ and $s_{2j}$ move disjoint positions and commute, so
\[
  q(p;i,j) \;=\; (\,p,\; s_{2j}(p),\;
    s_{2i+1}(s_{2j}(p)),\; s_{2i+1}(p)\,)
\]
is a $4$-cycle of the graph, a \emph{stitch}.  Its two edges in
direction $j$ are its \emph{hypercube edges}, lying in the hypercubes
of $p$ and of $s_{2i+1}(p)$; its two odd edges $s_{2i+1}$ are its
\emph{cross edges}.  Domino $j$ stays mixed in all four words, so all
four are non-stutters and a stitch is a gadget
(Lemma~\ref{prelim:tree-gluing}) with the two hypercubes as units and
its hypercube edges as unit edges.
\end{definition}

\begin{definition}[Rungs, anchored words]%
\label{cube:rung}
Let $C$ be a hypercube and let $l(C)$ be the index of its rightmost
mixed domino.
The edges of $C$ in direction $l(C)$ are its \emph{rungs}.  A word
$p$ is \emph{anchored} if every shifted domino is double, that is, if
it has no odd edge.  Stutters and anchored words are independent
notions: $0011$ is a stutter with an odd edge, and $0110$ is anchored
but not a stutter.
\end{definition}

\begin{lemma}[No stitch meets an anchored word]%
\label{cube:lemmaN}
No stitch contains an anchored word.  All four words of $q(p;i,j)$
carry two distinct colors at positions $2i+1, 2i+2$.
\end{lemma}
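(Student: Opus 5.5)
The plan is to prove the second sentence directly and deduce the first from it.  Write $p' = s_{2j}(p)$, $p'' = s_{2i+1}(p)$, and $p''' = s_{2i+1}(s_{2j}(p))$ for the four words of $q(p;i,j)$.  The key observation is that $s_{2j}$ moves only positions $2j$ and $2j+1$.  By the hypothesis $j \notin \{i,i+1\}$, these are disjoint from positions $2i+1$ and $2i+2$, since domino $j$ would have to be domino $i$ or domino $i+1$ to meet the shifted domino $i$.

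The steps, in order:
\begin{enumerate}[label=(\roman*),itemsep=0pt]
  \item In $p$, positions $2i+1, 2i+2$ carry distinct colors $b \ne c$ by hypothesis.
  \item Applying $s_{2j}$ leaves these positions untouched, so $p'$ also carries $b,c$ there.
  \item Applying $s_{2i+1}$ to $p$ or to $p'$ exchanges exactly these two positions.  So $p''$ and $p'''$ carry $c,b$ there, again two distinct colors.
\end{enumerate}
Hence in all four words the shifted domino $i$ is mixed.  An anchored word has every shifted domino double (Definition~\ref{cube:rung}), so none of the four words is anchored, and no stitch contains an anchored word.

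There is no real obstacle here.  The only point needing care is the disjointness of domino $j$ from shifted domino $i$.  I would spell it out as follows: domino $j$ occupies $\{2j, 2j+1\}$, and this set meets $\{2i+1, 2i+2\}$ only if $2j+1 = 2i+1$ (that is, $j = i$) or $2j = 2i+2$ (that is, $j = i+1$).  Both cases are excluded, and this is exactly why Definition~\ref{cube:stitch} excludes them.
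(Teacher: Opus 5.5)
Your proposal is correct and follows the paper's proof: $s_{2j}$ leaves positions $2i+1, 2i+2$ untouched and $s_{2i+1}$ exchanges their two distinct colors, so shifted domino $i$ is mixed in all four words and none of them is anchored. Your explicit check that $\{2j,2j+1\}$ meets $\{2i+1,2i+2\}$ only when $j \in \{i,i+1\}$ is a detail the paper leaves implicit (it is the disjointness built into Definition~\ref{cube:stitch}).
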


\begin{proof}
The words of $q(p;i,j)$ arise from $p$ by the swaps $s_{2j}$, which
does not touch positions $2i+1, 2i+2$, and $s_{2i+1}$, which exchanges
the two colors at those positions.  Since $p$ has distinct colors
there, so has every word of the stitch, and none of them is
anchored.
\end{proof}

\section{Inherited base results}
\label{sec:inherited}
This article relies on two earlier results, stated here with their
hypotheses and attribution.  The first supplies the two exception
families of the reformulated conjecture and is used only for their
exactness; the second is the article's only inherited Hamiltonicity
input, consumed as a black box.
\subsection{The binary classification (Verhoeff)}
\label{inherited:binary}

\begin{theorem}[Binary classification; Verhoeff
{\rm\cite{verhoeff2017}}]\label{inherited:binary-theorem}
For every binary signature $S = (k_0, k_1)$ with $k_0, k_1 \ge 1$,
the graph $N(S)$ admits a Hamiltonian path, and it admits a
Hamiltonian cycle exactly when $k_0$ and $k_1$ are both even.
\end{theorem}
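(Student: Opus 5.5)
The impossibility half is the easy one, and I will do it first by a degree argument rather than a parity count. The parity count is not enough: when exactly one multiplicity is odd, Lemma~\ref{prelim:stutter-parity} already balances the two classes of $N(S)$. Suppose $k_0$ is odd and consider $p = 0^{k_0}1^{k_1}$. The domino containing positions $k_0-1$ and $k_0$ is mixed, so $p$ is not a stutter. Its only valid swap is $s_{k_0-1}$, so $p$ has degree at most $1$ in $N(S)$. A Hamiltonian cycle on at least three vertices needs minimum degree~$2$, so $N(S)$ has no such cycle. The symmetric word $1^{k_1}0^{k_0}$ handles odd $k_1$. The tiny cases where $N(S)$ has at most two vertices, such as $(2,1)$ and $(1,1)$, are checked by hand; they have a path and no cycle in the simple-graph sense.

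For existence I will induct on $n = k_0+k_1$. The base cases are $\min(k_0,k_1) = 1$, where $G(k,1)$ is the path $0^k1, 0^{k-1}10, \ldots, 10^k$. Removing the stutters, which are the words with the $1$ at an even final position or pairs arranged as doubles, leaves a path, since each stutter there is an endpoint. Symmetrically for $(1,k)$.

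In the inductive step I split $V(S)$ by the last domino: words ending in $00$, in $11$, or in $01$/$10$. The first two blocks are copies of $V(S-0-0)$ and $V(S-1-1)$ with a double appended. This appending preserves the stutter property exactly, because the appended domino is a double aligned with the domino grid whenever $n$ is even. By induction each block's non-stutter graph carries a Hamiltonian cycle, or a path when the reduced signature has odd parts. The mixed block is two copies of $V(S-0-1)$ joined by the flip $s_{n-2}$; it contains no stutters at all, and it is a prism over $G(S-0-1)$. If $G(S-0-1)$ has a Hamiltonian path, the prism has a Hamiltonian cycle: traverse the path in one layer and return along it in the other. When $n$ is odd I will split by the last single position instead.

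The blocks are then glued with Lemma~\ref{prelim:gluing-lemma}, using parallel pairs formed by an edge $s_i$ with $i < n-3$ in two adjacent blocks together with the cross swap $s_{n-3}$. For paths, I concatenate block paths along the cross edges.

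The main obstacle is the gluing. It requires that the block cycles (or paths) \emph{contain} a suitable edge next to the seam. Unlike the hypercube setting, the inductive cycles are given arbitrarily. I will therefore strengthen the induction hypothesis to prescribe one edge, for instance that the cycle uses the edge $0^{a}1^{b}01\cdots \swap 0^{a}1^{b}10\cdots$ near the degree-$2$ extreme words. This is possible because extreme words of low degree force their incident edges. I then verify that the forced edges in adjacent blocks are parallel through $s_{n-3}$. In the odd cases, the path endpoints have to be prescribed as the degree-$1$ words $0^{k_0}1^{k_1}$ and $1^{k_1}0^{k_0}$, since these vertices are forced to be endpoints anyway.
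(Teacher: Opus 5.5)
The existence half has a genuine gap in the odd--odd case, and every other step of your induction depends on that case.

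\textbf{Why the odd--odd step fails.} When $k_0$ and $k_1$ are both odd, $n$ is even. Your mixed block is then the prism over $G(S-0-1)$, where $S-0-1=(k_0-1,k_1-1)$ is \emph{all even}. That graph contains the stutters of $S-0-1$, all of them in the even class. By Lemma~\ref{prelim:stutter-parity} its two classes therefore differ by $\binom{(k_0+k_1-2)/2}{(k_0-1)/2}\ge 2$ as soon as $k_0,k_1\ge 3$, so it has no Hamiltonian path. Already for $S=(3,3)$ the base $G(2,2)$ has four words in the even class and two in the odd class.

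So the prism trick gives nothing exactly where you need it. Your induction hypothesis could not have supplied a path of $G(S-0-1)$ in any case, because it concerns $N$ and not $G$. The other two steps both consume the odd--odd case:
\begin{enumerate}
\item the even--even step needs a Hamiltonian path of the odd--odd graph $G(k_0-1,k_1-1)$;
\item the one-odd step (split by the last color) needs a Hamiltonian path of the odd--odd graph $G(S^c)$.
\end{enumerate}
The induction therefore breaks at $(3,3)$ and at everything built on it. The odd--odd path is the result the paper credits to Eades, Hickey and Read, and it needs its own construction. The mixed block is not hopeless in itself: the prism over $G(2,2)$ does have a Hamiltonian cycle. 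But covering a prism over an all-even $G$, stutters included, is a new problem that your proposal does not address.

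\textbf{What is sound or repairable.}
\begin{enumerate}
\item Your impossibility half is correct, and it is the right argument. The classes of $N(S)$ are always balanced, so only a local obstruction can work, and for odd $k_0$ the non-stutter $0^{k_0}1^{k_1}$ has degree~$1$.
\item The even--even gluing needs no strengthened hypothesis, only forced edges. For $k_1\ge 4$ the word $0^{k_0-1}1^{k_1-2}0$ has degree~$2$ in $N(k_0,k_1-2)$. Its $s_{k_0-2}$-edge, with $11$ appended, is parallel through $s_{n-3}$ to the forced end edge at $0^{k_0-1}1^{k_1-1}$ of the path in the $01$ layer of the prism. The block ending in $00$ is handled symmetrically.
\item In the one-odd case with $k_0$ odd, the word $1^{k_1}0^{k_0}$ is a stutter, so it cannot be a prescribed endpoint. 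In general only $0^{k_0}1^{k_1}$ is forced.
\end{enumerate}

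For orientation: the paper does not prove this theorem. It inherits it from Verhoeff and uses it only for the exactness of the exception set. The only part it re-derives is the even--even cycle, as the arity-$2$ case of Theorem~\ref{thm:all-even}.
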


The cycle clause is not an input of this article: an all-even binary
signature is covered by Theorem~\ref{thm:all-even}.  The path clause
and the no-cycle half of the odd--odd case are used only to show that
the exceptional families of the reformulated conjecture are exact
(Proposition~\ref{prelim:path-only-2k11} and
Remark~\ref{assembly:exact}); the cycle constructions of this article
consume neither.

The existence clauses are machine-checked twice over: a deterministic
implementation of Verhoeff's recursion in the Python package
distributed with this submission, checked by its standing gate,
builds the path or cycle at every binary signature of length at most
$14$, and the construction of the path and of the even--even cycle is
formalized in the Lean development
(Appendix~\ref{appendix:formalization}).

\subsection{The multi-odd case (Stachowiak)}
\label{inherited:stachowiak-section}

\begin{theorem}[Multi-odd signatures; Stachowiak
{\rm\cite{stachowiak1992}}, as recorded by Verhoeff
{\rm\cite{verhoeff2017}}]\label{inherited:stachowiak}
Let $S$ be a signature of arity at least $3$ with at least two odd
multiplicities that is not a permutation of $\sig{2j,1,1}$ for any
$j$.  Then $G(S)$ admits a Hamiltonian cycle.
\end{theorem}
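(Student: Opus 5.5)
The paper consumes this theorem as a black box. If I had to reprove it, I would use induction on the length $n$, splitting $G(S)$ by the last color. For each active color $c$, let $B_c$ be the set of words ending in $c$. Then $B_c$ induces a copy of $G(S^c)$, and the edges between blocks are exactly the swaps $s_{n-2}$ that turn $\ldots cd$ into $\ldots dc$. I would glue Hamiltonian structures of the blocks along a spanning tree of the colors, using gadgets (Lemma~\ref{prelim:tree-gluing}). The natural gadget is the parallel pair $(w\,xy\,dc,\; w\,yx\,dc)$ in $B_c$ and $(w\,xy\,cd,\; w\,yx\,cd)$ in $B_d$, whose two cross edges are $s_{n-2}$.

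The immediate difficulty is that the blocks are not all of the same kind. If $S$ has $m \ge 2$ odd multiplicities, then $S^c$ has $m-1$ odd multiplicities when $c$ is odd and $m+1$ when $c$ is even. When $m = 2$ and $c$ is odd, $S^c$ has exactly one odd multiplicity. Such an $S^c$ has stutters, so by Lemma~\ref{prelim:stutter-parity} the graph $G(S^c)$ is unbalanced and has no Hamiltonian cycle at all. So a naive induction on ``$G$ is Hamiltonian'' fails. I would therefore strengthen the statement to be proved: for every signature with at least two odd multiplicities (outside $\sig{2j,1,1}$), $G(S)$ has a Hamiltonian cycle through a prescribed edge of a specific local shape. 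This provides the prescribed gluing edges in the two-odd blocks. For a block with one odd multiplicity, I would not take a cycle of the block alone. Instead I would cover $B_c$ by Hamiltonian paths whose endpoints lie in the majority (even) class. These endpoints are then absorbed by cross edges into neighboring blocks, exactly as in the parity count of Proposition~\ref{prelim:path-only-2k11}, but now with enough cross edges available.

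Concretely, I would carry out the induction on arity. The base case is arity $3$ with two odd multiplicities, say $\sig{a,b,c}$, excluding $\sig{2j,1,1}$. Here I would build the cycle by hand from the binary theory: words with the third color fixed in a given position reduce to binary signatures, and I would use the path and cycle clauses of Theorem~\ref{inherited:binary-theorem} on these fibers. The fibers would then be zig-zagged across the position of the distinguished color, boustrophedon style, as in the hypercube cycles. For larger arity I would merge two colors and use the fiber structure (Stachowiak's arity reduction) to lift a cycle of the smaller-arity signature. Each vertex of the reduced graph is replaced by a binary sub-permutation graph of the two merged colors, and the lift threads Hamiltonian paths of those binary graphs along the reduced cycle.

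The main obstacle is the endpoint bookkeeping in this lift. Consecutive binary fibers must be entered and exited at words joined by a swap, and the path clause of Theorem~\ref{inherited:binary-theorem} fixes neither endpoint. So I would need a strengthened binary lemma: Hamiltonian paths of $N$ or $G$ of a binary signature between prescribed extreme words, such as sorted and reverse-sorted. This must hold in each parity case, and the parity of the endpoints must match the bipartition. This is exactly where the exclusion of $\sig{2j,1,1}$ should appear, as the unique case in which the endpoint parities cannot be matched. I would first verify this lemma on small signatures before committing to the full induction.
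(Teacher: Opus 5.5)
There is no proof in the paper to compare with. The statement is imported from Stachowiak's 1992 paper (the cycle form being his Lemmas~6 and~11), used as a black box, and checked by an implementation and a Lean formalization of that paper. Taken as a proof in its own right, your proposal does not go through.

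Your last-color induction never states its strengthened hypothesis. Moreover, no hypothesis of the form ``Hamiltonian cycle through a prescribed edge'' can be applied to the blocks, because blocks can be path-only, not merely one-odd. For $S=\sig{3,1,1}$, which satisfies the hypotheses, the three blocks are $G(2,1,1)$ (no cycle, Proposition~\ref{prelim:path-only-2k11}) and two copies of $G(3,1)$ (a path on four words). The same happens whenever $S^c$ is an odd--odd binary signature or a permutation of $\sig{2j,1,1}$, as for $S=\sig{2j,1,1,1}$. So the induction has to run through Hamiltonian paths with prescribed ends, which you never formulate.

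For a one-odd block, the parity count of Proposition~\ref{prelim:path-only-2k11} forces any Hamiltonian cycle of $G(S)$ to cover $G(S^c)$, stutters included, by at least as many paths as $S^c$ has stutters. That is already six for $S=\sig{3,3,2}$, $S^c=\sig{2,3,2}$. Every end must lie on a word ending in $dc$ whose cross edge fits the other blocks. That is a prescribed-endpoint path-cover theorem at least as strong as the statement, and ``enough cross edges available'' is not an argument for it. You also announce an induction on length and then, ``concretely'', an induction on arity, without saying how the two fit together.

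The fiber lift fails as described, whatever endpoint lemma you prove. After merging $c$ and $d$ into $e$, two words of one fiber are adjacent only if the swapped $c$ and $d$ are consecutive in the full word. So a fiber is a disjoint union of products of binary graphs, one product for each way of distributing the $c$'s among the maximal runs of $e$-positions. It is disconnected as soon as there are two runs: the fiber of $\ldots e\,x\,e\ldots$ with $k_c=k_d=1$ consists of two non-adjacent words.

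Even a contiguous fiber is $G(k_c,k_d)$, not $N(k_c,k_d)$. Unless both multiplicities are odd it contains stutters, and apart from $G(2q,1)$ (up to the order of the colors) it contains at least two. By Lemma~\ref{prelim:stutter-parity} it then has no Hamiltonian path at all. In the odd--odd case, by contrast, the two sorted words have degree $1$, so every Hamiltonian path already joins them and your strengthened binary lemma adds nothing.

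You cannot dodge this by choosing the pair. For $S=\sig{3,3,2,2}$, merging the two odd colors gives the all-even $\sig{6,2,2}$, whose $G$ has no Hamiltonian cycle to lift. Every other merge has contiguous fibers $G(3,2)$ or $G(2,2)$, each with two stutters, which no single pass can traverse. A working reduction must let the cycle enter and leave fibers repeatedly, and nothing in your plan organizes that.

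Your arity-$3$ base case has the same defect. Fixing the position of a singleton third color cuts the word into two segments that cannot exchange colors, so its fiber is a disjoint union of products $G(\mathrm{left})\times G(\mathrm{right})$ rather than a binary graph. When the third color is repeated, as in $\sig{3,3,2}$, there is no single position to fix. Arity-$3$ signatures such as $\sig{2j-1,2,1}$ are among the blocks the paper actually consumes, so this base case is the heart of the matter, and it is left undone.
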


A stutter (Section~\ref{prelim:stutter}) consists of doubles and at
most one unpaired trailing color, so its signature has at most one
odd multiplicity.  Hence a signature with at least two odd
multiplicities has no stutter, $N(S) = G(S)$, and the theorem says
equivalently that $N(S)$ admits a Hamiltonian cycle.

\begin{remark}[Attribution]\label{inherited:stachowiak-attribution}
The cycle is Stachowiak's construction for Hamiltonian paths in
graphs of linear extensions of unions of posets
\cite{stachowiak1992}; the statement above is the form in which
Verhoeff \cite{verhoeff2017} records it.  Stachowiak's main theorem
characterizes the signatures with a Hamiltonian path (either the
signature is linear or at least two multiplicities are odd); the
cycle form is his Lemmas~6 and~11, whose hypotheses we have checked
against the printed paper.  For the exactness of the exception set
(Proposition~\ref{prelim:path-only-2k11}) we also use his main
theorem in its path form and his Lemma~12, which excludes a cycle
at $\sig{2k,1,1}$.
\end{remark}

\begin{remark}[Consumption in this article]%
\label{inherited:stachowiak-consumption}
Theorem~\ref{inherited:stachowiak} is the article's single inherited
Hamiltonicity input.  It is consumed at three places, the first two
of them graphs with exactly two odd multiplicities:
\begin{enumerate}[label=\textup{(\arabic*)},itemsep=1pt]
  \item the blocks $T_z = G(S^z)\cdot z$ of the one-odd section
  (Proposition~\ref{one-odd:assembly}(a)), whose cycle is used as an
  arbitrary Hamiltonian cycle --- nothing beyond its existence is
  assumed;
  \item the block $T_0 = G(2j-1,2,1)\cdot 0$ of the family
  $\sig{2j,2,1}$ (Proposition~\ref{family:prop});
  \item the main theorem at two or more odd multiplicities
  (Theorem~\ref{assembly:theorem}), where $N(S) = G(S)$ and the
  conclusion is the theorem itself.
\end{enumerate}
The two-odd graphs with arity $2$ are excluded by the arity hypothesis
(and are the odd--odd binary exception,
Theorem~\ref{inherited:binary-theorem}); the family $\sig{2j,1,1}$ is
excluded in the statement and avoided in the applications by the
argument in Proposition~\ref{one-odd:assembly}(a).  The theorem is
stated only here, and these three places are its only uses.  Both supporting artifacts of the article cover this input:
the Python implementation with its standing gate, and the Lean
formalization with the bridges (Appendices~\ref{appendix:machine-verification}
and~\ref{appendix:formalization}).
\end{remark}

\section{Two tools: hypercube cycles and spanning matching trees}
\label{sec:tools}

The all-even proof glues the hypercubes of Lemma~\ref{cube:lemma1} along a
spanning tree of stitches.  The two inputs to that gluing are the
following lemma, which supplies a Hamiltonian cycle of each hypercube, and
Lemma~\ref{tools:lemmaM}, which supplies a set of pairwise vertex-disjoint
odd edges of $G(S)$ --- a matching --- whose image between hypercubes is a
spanning tree of the hypercubes, and which can be chosen so that every
color has a free word (free is defined by the invariant in the lemma).

\begin{lemma}[Boustrophedon hypercube cycles]\label{cube:boustrophedon}
Let $r \ge 2$, let $d$ be a direction of the hypercube $Q_r$, and let
$f$ be an edge of $Q_r$ not in direction $d$.  Then $Q_r$ has a
Hamiltonian cycle that contains all $2^{r-1}$ edges in direction $d$,
and $f$.  We call it a \emph{boustrophedon cycle in direction $d$}.
\end{lemma}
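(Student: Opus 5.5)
We prove the statement with the product construction: view $Q_r$ as $Q_{r-1} \times K_2$, where the $K_2$ factor is direction $d$, and sweep back and forth along a Hamiltonian cycle of $Q_{r-1}$.

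Write a vertex of $Q_r$ as a pair $(u,b)$, with $u \in Q_{r-1}$ (the coordinates other than $d$) and $b \in \{0,1\}$ (the $d$-coordinate). The edge $f$ is not in direction $d$, so it has the form $(u,b)\swap(u',b)$, where $u\swap u'$ is an edge $g$ of $Q_{r-1}$. The construction goes in three steps.

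\textbf{Step 1.} Take a Hamiltonian cycle $H = (u_0, u_1, \ldots, u_{m-1})$ of $Q_{r-1}$, with $m = 2^{r-1}$, that passes through $g$, and index it so that $g = u_0 \swap u_1$. Such an $H$ exists for $r-1 \ge 1$.
\begin{itemize}
\item For $r-1 = 1$, $H$ is the digon on the single edge. This case is treated separately below.
\item For $r-1 \ge 2$, take the reflected Gray code cycle. It contains an edge of every direction, and the automorphism group of $Q_{r-1}$ is edge-transitive. So an automorphism carries some edge of the Gray cycle onto $g$, and its image is the required $H$.
\end{itemize}

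\textbf{Step 2.} Since $m$ is even, define the boustrophedon cycle
\[
  (u_0,b),(u_0,\bar b),(u_1,\bar b),(u_1,b),(u_2,b),(u_2,\bar b),\ldots,(u_{m-1},b),
\]
and close it back to $(u_0,b)$. Each column $u_t$ contributes its $d$-edge $(u_t,0)\swap(u_t,1)$. Consecutive columns are joined at level $\bar b$ or $b$, alternating. With $m$ columns and $m$ even, the walk sits at level $b$ after $u_{m-1}$, so the closing edge $(u_{m-1},b)\swap(u_0,b)$ is the edge $u_{m-1}\swap u_0$ of $H$, lifted to level $b$.

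\textbf{Step 3.} We fix the levels so that $f$ is a connector. The connector between $u_0$ and $u_1$ lies at level $\bar b$ in the listing above. Swapping the roles of $b$ and $\bar b$, that is, starting at $(u_0,\bar b)$, moves this connector to level $b$, so it is exactly $f$. The cycle visits all $2m = 2^r$ vertices once each. It uses all $m$ edges in direction $d$, and it contains $f$.

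\textbf{Case $r = 2$.} Here $Q_{r-1}$ is a single edge and $H$ is a digon. But $Q_2$ is itself a $4$-cycle, which contains both $d$-edges and $f$, so the statement holds directly.

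The only point needing care is Step 1: placing a prescribed edge on a Hamiltonian cycle of $Q_{r-1}$. It is settled by edge-transitivity of the hypercube, or alternatively by a short induction using the same product construction.
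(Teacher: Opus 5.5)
Your proof is correct and is the paper's construction: the boustrophedon sweep through the columns of $Q_{r-1}\times\{0,1\}$ along a Hamiltonian cycle $u_0u_1\cdots u_{m-1}$ of $Q_{r-1}$ with $g=u_0u_1$, with the levels chosen so that $f$ is the first connector. The only difference is Step~1. You obtain the cycle through $g$ from the reflected Gray code and the edge-transitivity of the automorphism group. The paper instead inducts on $r$ and applies the lemma itself to $Q_{r-1}$ in the direction of $g$; such a cycle contains every edge of that direction, $g$ included. That is the self-contained, Gray-code-free alternative you mention in your last sentence.
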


\begin{proof}
Induction on $r$.  $Q_2$ is a $4$-cycle, which contains every edge.
For $r \ge 3$ write $Q_r = Q_{r-1} \times \{0,1\}$, with the last
factor in direction $d$, and let $f = \{(u,0),(u',0)\}$ (exchange the
two levels if necessary).  By the lemma for $Q_{r-1}$, in the
direction of the edge $uu'$ and with any edge in another direction,
$Q_{r-1}$ has a Hamiltonian cycle that contains $uu'$; write it as
$u_0 u_1 \cdots u_{2^{r-1}-1}$ with $u_0 = u$ and $u_1 = u'$.  The
walk
\[
  (u_0,1),\ (u_0,0),\ (u_1,0),\ (u_1,1),\ (u_2,1),\ (u_2,0),\ \ldots
\]
uses the edge in direction $d$ at every $u_t$, and it uses the edge
$u_t u_{t+1}$ at level $0$ for even $t$ and at level $1$ for odd $t$.
Since $2^{r-1}$ is even, the walk closes up; it is Hamiltonian and
contains $f$ (the step $t=0$).
\end{proof}

The binary reflected Gray code, read as a cycle, is a boustrophedon
cycle in the direction of its fastest-changing coordinate, which flips
at every other step.  The lemma adds the freedom to pass through one
prescribed edge in another direction, and the proof needs no Gray
code: the cycle of $Q_{r-1}$ comes from the lemma itself.

\begin{lemma}[Spanning matching trees]\label{tools:lemmaM}
Let $S$ be a signature of even length $2m \ge 2$.  There exists a set
$M(S)$ of odd edges of $G(S)$ that is a \emph{matching} (the edges are
pairwise vertex-disjoint) and, viewed as edges between the hypercubes of
$V(S)$ (the $0$-dimensional ones, the stutters, included), a
\emph{spanning tree} of those hypercubes.  The induction of the proof
establishes a strengthening of this claim, as is common: the set can be
chosen so that the following invariant holds.
\begin{itemize}[itemsep=0pt]
  \item[\textup{(INV)}] for every color $z$ of $S$, some word
  ending in $z$ lies on no edge of $M(S)$.  We call such a
  word \emph{free}.
\end{itemize}
\end{lemma}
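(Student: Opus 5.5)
I would prove Lemma~\ref{tools:lemmaM} by induction on $m$, splitting the words of $S$ by their last domino. For $m = 1$ the signature is either $(2)$, with the single word $cc$ forming one $0$-dimensional hypercube, or $(1,1)$, with the two words $ab$ and $ba$ forming one $1$-dimensional hypercube. In both cases $M(S) = \emptyset$ is a spanning tree of the single hypercube, and every word is free, so (INV) holds.

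For $m \ge 2$, I partition $V(S)$ by the skeleton of the last domino, an unordered pair $\{a,b\}$ (possibly $a = b$). Each part consists of the words $u\,ab$ and $u\,ba$ with $u$ a word of $S - a - b$. The hypercubes of $S$ inside this part correspond bijectively to the hypercubes of $S - a - b$: a hypercube $C$ of $S-a-b$ extends to $C \times \{ab, ba\}$, a hypercube of one dimension more when $a \ne b$. Odd edges of $S - a - b$ lift to odd edges of $S$ by appending a fixed ordered last domino. So for each pair $\{a,b\}$ I take $M(S-a-b)$ from the induction hypothesis and lift it by appending the single ordering $ab$. The lifted edges are pairwise disjoint, and within each part they form a spanning tree of that part's hypercubes.

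It remains to connect the parts with one odd edge per adjacent pair of parts, arranged as a tree, keeping the matching property and (INV). A connecting edge must be a swap $s_{2m-3}$ across the last two dominoes, taking a word $u\,y\,\mid\,b\,c$ with $y \ne b$ to $u\,b\,\mid\,y\,c$. This moves between the part $\{b,c\}$ and the part $\{y,c\}$. I would organize the parts as a tree by fixing a color $c$ and changing one element of the last-domino pair at a time: the parts $\{x,c\}$ are joined in a star around one base part, and then each part $\{x,y\}$ with $c \notin \{x,y\}$ hangs off a part containing $x$ or $y$. Each connecting edge needs endpoints that are unmatched so far. Here the induction invariant is used. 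Take $w$ a free word of $S - a - b$ ending in a chosen color $y$. The word $w\,ab$ is then a word of the part ending in $y$ followed by the domino $ab$, and it is unmatched except possibly by other connecting edges. Its swap at position $2m-3$ exchanges $y$ and $a$, giving $w'\,a\,\mid\,y\,b$, which lands in the part $\{y,b\}$. That endpoint must also be unmatched: I would choose the lift orientation, $ab$ versus $ba$, per part so that the lifted matching lives on one orientation, and use the other orientation for connecting endpoints. Since $w\,ba$ lies on no lifted edge at all, those words are automatically free of the lifted matching. Distinct connecting edges at the same part then need distinct free words, which are available because the other-orientation layer $u\,ba$ is entirely unmatched.

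Finally, (INV) for $S$: for each color $z$, a word ending in $z$ that lies on no edge of $M(S)$. The other-orientation layer of any part $\{a,z\}$ (that is, words $u\,az$ when lifts used $za$) contains many unmatched words, and only a few of them are consumed by connecting edges. The main obstacle is this bookkeeping of orientations and degenerate cases: doubles $\{a,a\}$, where both orientations coincide so there is no spare layer; parts with no valid swap color; and small supports such as binary $S$. For doubles I would rely on (INV) of $S - a - a$ directly and route connecting edges only into mixed parts. I would check that every double part is adjacent in the part tree to some mixed part, which holds whenever the arity is at least $2$.
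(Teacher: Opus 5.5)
\textbf{There is a gap in the step that keeps $M(S)$ a matching.}

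Your lifting step is sound. So is your description of a connecting edge: $u\,y \mid b\,c \sim u\,b \mid y\,c$ joins the parts $\{b,c\}$ and $\{y,c\}$, and both of its endpoints end in the shared color $c$.

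You fix one lift orientation per part and place connecting endpoints in the other layer. But the orientation of a connecting endpoint is not yours to choose: in the part $\{a,b\}$ it ends in the color that part shares with its neighbor. A part joined through both of its colors therefore needs endpoints in both layers, and this cannot always be avoided.

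Take $S$ binary with $k_a, k_b \ge 2$ and $m \ge 3$.
\begin{itemize}
\item The parts are $\{a,a\}$, $\{a,b\}$ and $\{b,b\}$. A connecting edge changes only one color of the last domino, so the tree of parts is forced to be the path $\{a,a\}$, $\{a,b\}$, $\{b,b\}$.
\item In your scheme the edge from $\{a,a\}$ is $u\,b\cdot aa \sim u\,a\cdot ba$ with $ub$ free in $M(S-a-a)$, since a double part has no spare layer.
\item The edge from $\{b,b\}$ is $u'a\cdot bb \sim u'b\cdot ab$ with $u'a$ free in $M(S-b-b)$.
\item The endpoints of these two edges in $\{a,b\}$ have opposite orientations. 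So whichever orientation you give the lifts of $\{a,b\}$, one of them lies in the lifted layer.
\item That endpoint's prefix ($ua$ or $u'b$) is dictated by a free word of a \emph{different} matching. Nothing in (INV) makes it unmatched in $M(S-a-b)$, which is in general nonempty here.
\end{itemize}
So the matching property is not established.

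\textbf{The missing idea} is to choose the orientation per part \emph{and group}, where the group is the last color of the prefix. An odd swap of an even-length prefix never touches its last position, so every edge of $M(S-a-b)$ stays inside one group. Lifting different groups with different orientations therefore still gives a matching and a spanning tree of the part's hypercubes. The two troublesome endpoints above have groups $a$ and $b$, so both can be avoided.

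This is what the paper does. It then chooses its bridges (T1)--(T4) so that each (part, group) carries at most one endpoint whose prefix is not free by (INV). The only exception is in the part $\{a,b\}$ of the two smallest colors: a pair $t_d\cdot ba$, $t_d\cdot ab$ whose common prefix $t_d$ is free. The paper then derives (INV) for $S$ from (part, group, orientation) combinations that no bridge occupies, with separate cases when $k_a=1$ or $k_b=1$. Your remark that a free layer has many words of which only a few are consumed would need to be replaced by such a record-by-record check.
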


\begin{proof}
Induction on $m$.  If $m = 1$, or $S$ has one color, there is a
single hypercube and $M(S) = \emptyset$.  Let $m \ge 2$, let
$a < b$ be the two smallest colors of $S$, and let $R$ be
the set of the other colors.

\emph{Blocks and groups.}  For colors $c \ne d$ put
\[
  X_{cd} = \{\, t \cdot cd,\ t \cdot dc \;:\; t \in V(S - c - d) \,\},
\]
and if $k_c \ge 2$ put
$X_{cc} = \{\, t \cdot cc : t \in V(S - c - c) \,\}$.  These are the
\emph{blocks}; the block $X_{cd}$ depends only on the pair $\{c,d\}$,
so $X_{cd} = X_{dc}$.
The content of the last domino is part of the skeleton, so each hypercube
lies in exactly one block, and $t \mapsto t \cdot cd$ is a bijection
between the hypercubes of $V(S-c-d)$ and the hypercubes of $X_{cd}$.  Odd
edges of $G(S-c-d)$ never move the last color of $t$; that color
is the \emph{group} of $t \cdot cd$.

\emph{Lifts.}  For every non-double block $X_{cd}$ and every group
$g$, choose an orientation $o(cd,g) \in \{cd, dc\}$ (fixed below).
Lift every edge $\{t_1, t_2\}$ of $M(S-c-d)$ whose two
words have group $g$ to $\{t_1 \cdot o, t_2 \cdot o\}$, where
$o = o(cd,g)$, and lift every edge of $M(S-c-c)$ to
$\{t_1 \cdot cc,\, t_2 \cdot cc\}$.  Inside each block the lifts form
a matching and a spanning tree of the block's hypercubes, by the induction
hypothesis.

\emph{Bridges.}  For $c \ne d$, the odd swap at position $2m-3$ maps
$v \cdot c \cdot dc'$ to $v \cdot d \cdot cc'$, and so joins the block
$X_{dc'}$ to the block $X_{cc'}$.  Add the following bridges; each is
\emph{owned} by the block named first:
\begin{itemize}[itemsep=1pt]
  \item[\textup{(T1)}, \textup{(T2)}] for $d \in R$: let $t_d = v_d \cdot d$ be a free word
  of $V(S-a-b)$ (INV).  Add $v_d d \cdot ba \swap
  v_d b \cdot da$, joining $X_{ad}$ to
  $X_{ab}$, and $v_d d \cdot ab \swap v_d a
  \cdot db$, joining $X_{bd}$ to $X_{ab}$;
  \item[\textup{(T3)}] for $c < d$ in $R$: add $v a \cdot cd \swap v c \cdot
  ad$, joining $X_{cd}$ to $X_{ad}$, for any
  $v \in V(S-a-c-d)$;
  \item[\textup{(T4)}] for $k_c \ge 2$: put $y_c = a$ if $c \ne a$, and
  $y_a = b$.  Let $v \cdot y_c$ be a free word of
  $V(S-c-c)$ (INV), and add $v y_c \cdot cc \swap v c \cdot y_c c$,
  joining $X_{cc}$ to $X_{y_c c}$.
\end{itemize}
Every block other than $X_{ab}$ owns exactly one bridge.  Its
bridge leads to $X_{ab}$, or to a block $X_{ad}$ whose
own bridge leads to $X_{ab}$, so the bridges form a spanning
tree of the blocks.  Lifts stay inside blocks, and the bridges join
pairwise different pairs of blocks, so no two edges of $M(S)$ join
the same two hypercubes, and the lifts together with the bridges form a
spanning tree of the hypercubes (Figure~\ref{tools:block-tree}).

\begin{figure}[ht]
\centering
\begin{tikzpicture}[
  blk/.style={draw, rounded corners=2pt, inner sep=3pt},
  lbl/.style={font=\scriptsize, fill=white, inner sep=1pt}]
  \node[blk] (ab) at (0,0) {$X_{ab}$};
  \node[blk] (aa) at (-4.2,-1.5) {$X_{aa}$};
  \node[blk] (bb) at (-2.1,-1.5) {$X_{bb}$};
  \node[blk] (bd) at (0.6,-1.5) {$X_{bd}$};
  \node[blk] (ad) at (3.6,-1.5) {$X_{ad}$};
  \node[blk] (cd) at (2.4,-3.0) {$X_{cd}$};
  \node[blk] (dd) at (4.8,-3.0) {$X_{dd}$};
  \draw (aa) -- node[lbl] {T4} (ab);
  \draw (bb) -- node[lbl] {T4} (ab);
  \draw (bd) -- node[lbl] {T2} (ab);
  \draw (ad) -- node[lbl] {T1} (ab);
  \draw (cd) -- node[lbl] {T3} (ad);
  \draw (dd) -- node[lbl] {T4} (ad);
\end{tikzpicture}
\caption{The tree of blocks in the proof of Lemma~\ref{tools:lemmaM},
for colors $c < d$ in $R$.  Each block owns the bridge to its parent;
$X_{aa}$, $X_{bb}$ and $X_{dd}$ occur only when the color is
repeated.}
\label{tools:block-tree}
\end{figure}

\emph{Matching.}  Record each bridge word in a non-double block
as (block; group, last domino):

\begin{center}
\footnotesize
\begin{tabular}{l|l}
\hline
block & records of bridge words\\
\hline
$X_{ab}$ & $(d; ba)$, $(d; ab)$ for
$d \in R$ (prefix $t_d$, free); $(a; ba)$ if
$k_a \ge 2$; $(b; ab)$ if $k_b \ge 2$\\
$X_{ad}$, $d \in R$ & $(b; da)$; $(c; ad)$ for
$c \in R$, $c < d$; $(d; ad)$ if $k_d \ge 2$\\
$X_{bd}$, $d \in R$ & $(a; db)$\\
$X_{cd}$, $c < d$ in $R$ & $(a; cd)$\\
\hline
\end{tabular}
\end{center}

Apart from the groups $d \in R$ of $X_{ab}$, each
(block, group) holds at most one record.  In $X_{ab}$, group
$a$ holds only $(a; ba)$ and group $b$ only
$(b; ab)$.  In $X_{ad}$ the groups $b$,
$c < d$ and $d$ of the records are distinct, and $X_{bd}$ and
$X_{cd}$ hold one record each.

\emph{Principle.}  For a (block, group) that holds one record, choose
$o(cd,g)$ to be the orientation that does \emph{not} occur in that
record.  For every other (block, group), the groups $d$ of
$X_{ab}$ included, choose $o(cd,g)$ arbitrarily.  Then no
bridge word lies on a lift: the lifts of a group that holds
one record have the orientation that the record does not have, so
they avoid the recorded word; the bridge words of group
$d \in R$ in $X_{ab}$ are $t_d \cdot ba$ and
$t_d \cdot ab$, and $t_d$ lies on no edge of
$M(S-a-b)$, so no lift contains them; and a double block
$X_{cc}$ holds exactly one bridge word, its own (T4)
word $v y_c \cdot cc$, whose prefix is free by choice.  Bridge words are pairwise
distinct: the records in the table differ, and each double block
holds only one.  So $M(S)$ is a matching.

\emph{The invariant for $S$.}  In each row of the following table,
$t$ is free in the smaller matching (by INV) and the record of
$t \cdot p$ is not in the table above, so $t \cdot p$ is free in
$M(S)$.

\begin{center}
\footnotesize
\begin{tabular}{l|l|l}
\hline
color $z$ & free word $t \cdot p$ & why it is free\\
\hline
$k_z \ge 3$ & $t \cdot zz$, $t$ ends in $z$ & the bridge of
$X_{zz}$ has prefix ending in $y_z \ne z$\\
$z \in R$ & $t \cdot az$, $t$ ends in $b$ & group $b$
of $X_{az}$ holds only $(b; za)$\\
$z = b$, $k_a \ge 2$ & $t \cdot ab$, $t$ ends in
$a$ & $(a; ab)$ is not a record\\
$z = b$, $k_a = 1$, & $t \cdot db$, $d \in R$, &
$X_{bd}$ holds only\\
$k_b \le 2$ & $t$ ends in $g \ne a$ & $(a; db)$\\
$z = a$, $k_b \ge 2$ & $t \cdot ba$, $t$ ends in
$b$ & $(b; ba)$ is not a record\\
$z = a$, $k_b = 1$, & $t \cdot da$, $d \in R$, &
group $g$ of $X_{ad}$ has no\\
$k_a \le 2$ & $t$ ends in $g \ne b$ & record
$(g; da)$\\
\hline
\end{tabular}
\end{center}

In the last two rows, $R$ is non-empty because $\lvert S \rvert \ge
4$, and a color $g$ with the required property exists because
$\lvert S - b - d \rvert \ge 2$ (respectively
$\lvert S - a - d \rvert \ge 2$) while $a$ (respectively
$b$) occurs only once.  The rows are exhaustive.
\end{proof}

\section{The all-even case: all multiplicities even}
\label{sec:all-even}

\begin{theorem}[All-even theorem]\label{thm:all-even}
Let $S$ be a signature of arity at least $2$ in which every multiplicity
is even.  Then $N(S)$ admits a Hamiltonian cycle.
\end{theorem}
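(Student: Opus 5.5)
The plan is to apply Lemma~\ref{prelim:tree-gluing} with the hypercubes of dimension $r \ge 2$ as units. By Lemma~\ref{cube:lemma1}(b) these are exactly the non-stutter skeleton classes. As $H_C$ take, for each such hypercube $C$, a boustrophedon cycle in the rung direction $l(C)$ (Lemma~\ref{cube:boustrophedon}). It contains every rung of $C$, and we keep its one free prescribed edge $f$ in reserve. The gadgets will be stitches (Definition~\ref{cube:stitch}). Every rung lies on $H_C$, so any stitch whose hypercube edges are rungs is admissible. What remains is to choose the stitches so that they form a spanning tree on the units, their unit edges are pairwise distinct, and each cube uses at most one non-rung unit edge, namely its reserved $f$.

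\emph{Step 1: families and rung stitches.} Group the cubes into families $F(w,cd)$. Here the rightmost mixed domino has index $l$ and unordered content $\{c,d\}$, and it is followed by the fixed string $w$ of trailing doubles. Inside a family, a word is $t\cdot(cd\text{ or }dc)\cdot w$ with $t \in V(S - w - c - d)$, a signature of even length $2l$. Its cubes correspond bijectively to the hypercubes of $V(S-w-c-d)$, and $c,d$ have odd multiplicity there, so there are no prefix stutters. Lemma~\ref{tools:lemmaM} gives a matching $M$ of odd edges of the prefix that is a spanning tree of these prefix hypercubes. Lift each edge $\{t_1,t_2\}$ of $M$, swapping at some position $2i+1$ with $i+1 \le l-1$, to the stitch $q(t_1\cdot cd\cdot w;\,i,\,l)$. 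Domino $l$ is disjoint from the swap, so both hypercube edges are rungs. Since $M$ is a matching, the rungs used are at distinct prefixes and hence distinct. This makes each family a single tree component, with only rungs consumed.

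\emph{Step 2: descent stitches between families.} Let $w = ee\,w'$ be nonempty. Attach $F(w,cd)$ to a family with a shorter trailing-double string using the odd swap $s_{2l+1}$ between domino $l$ and the first trailing double. This sends $\cdots cd|ee\cdots$ to $\cdots ce|de\cdots$, a cube whose rightmost mixed domino is $l+1$, with trailing doubles $w'$. Take as hypercube edges the flips of some mixed domino $j<l$, which exists because $r\ge 2$. In the target cube, direction $j$ is again not the rung direction. So each descent stitch spends the reserved edge $f$ of the cube in $F(w,cd)$ and a non-rung edge of the cube it lands on. The families then form a tree ordered by the length of $w$, rooted at the families with $w=\emptyset$. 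Those root families must also be joined to one another, by a stitch of the same type through an odd swap that changes the content of the last domino. This is the rung stitch $s_1$ of Figure~\ref{intro:stitch-figure}.

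\emph{Main obstacle.} The hard part is bookkeeping the non-rung unit edges. A cube may be the upper end of its own descent stitch and also the landing cube of several others, and each of these needs a non-rung edge on $H_C$, while the boustrophedon lemma guarantees only one prescribed non-rung edge. I would first try to route all descents landing in a family so that they land in distinct cubes, and moreover in cubes that do not themselves own a descent. This can be arranged by choosing the prefix $v$ of the descending word to be a free word in the sense of (INV) of Lemma~\ref{tools:lemmaM}; this is the role the invariant plays. If that fails, the fallback is to land descents on rungs of the target cube, which are always on $H_C$. In that case I would choose the domino edge $j$ to be the target's rightmost mixed domino $l+1$ wherever the swap geometry allows, and check distinctness against the rung stitches of Step~1 using the matching property once more.
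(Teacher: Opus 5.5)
Your overall architecture is the paper's: hypercubes as units, boustrophedon cycles in the rung direction, rung stitches lifted from the matching of Lemma~\ref{tools:lemmaM}, and descents through the first trailing double. Step~1 is correct as written. There is, however, a genuine gap at the point you yourself flag as the main obstacle. The proof needs every hypercube to meet at most one stitch in a non-rung edge; this is the paper's Lemma~\ref{all-even:lemmaI}. Neither of your remedies establishes it.

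The invariant (INV) is the wrong tool. A free word only guarantees that the corresponding rung is not used by the matching $M$, so it controls collisions with \emph{rung} stitches. Descent stitches use non-rung directions and never collide with rung stitches anyway. What must be excluded is that the cube in which one family's descent lands is also the cube from which another descent, or a root stitch, leaves. A free prefix says nothing about that. Your fallback is impossible: a stitch whose odd swap is $s_{2l+1}$ must use a direction $j$ disjoint from positions $2l+1, 2l+2$, so $j\notin\{l,l+1\}$. But the landing cube's rung direction is exactly $l+1$.

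The missing idea is to fix the connector words rigidly, so that the source and landing cubes can be told apart from their skeletons. Let $\Lambda$ be the rightmost mixed domino and $\Lambda'$ the domino before it. The paper takes the descent word $v_F = w\cdot ab\cdot e'e\cdot yy\cdot u'$ with $w$ a stutter and direction $l-1$. In the source cube this gives $\Lambda'=\Lambda=\{a,b\}$, while in the landing cube $\Lambda'$ and $\Lambda$ share exactly one color. Root words $ab\cdot w'\cdot cc\cdot ba$ have $\Lambda'$ a double disjoint from $\Lambda$, and their landing words share exactly $c$. Choosing $e$ as the largest eligible color separates the two kinds of landing words. From this, all connector words have pairwise distinct skeletons.

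You also never construct the stitches that join the root families $F(\emptyset,ab)$ in arity $\ge 3$. The $s_1$ stitch of Figure~\ref{intro:stitch-figure} that you cite is a rung stitch inside the single family $F(\emptyset,01)$ and does not change the last domino. The root stitches consume non-rung edges as well, so they must enter the same one-per-cube count.
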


Throughout this section $S$ is all even and $n = 2m$; its colors are
named so that $0 < 1$ are the two smallest.  By
Lemma~\ref{cube:lemma1}(b) the non-stutter graph is the union of the
hypercubes of dimension $r \ge 2$ together with the odd edges.  We build
the cycle by gluing the hypercubes along a spanning tree of stitches.  The
stitches come in three kinds: the \emph{rung stitches} form one
spanning tree inside every \emph{family} of hypercubes, and each family
other than the root family owns one connector, a \emph{descent} or a
\emph{root stitch}, that attaches it to a family closer to the root.

\subsection{Families and rung stitches}

Let $p$ be a non-stutter of $S$, let $l$ be its rightmost mixed
domino, let $\{a,b\}$ with $a < b$ be its content, and let
$u = p_{2l+2} \cdots p_{n-1}$ be its trailing word (a concatenation
of doubles).  The \emph{family} of $p$ is
\[
  F(u, ab) = \{\, t \cdot p' \cdot u \;:\;
  t \in V(S - u - a - b),\ p' \in \{ab, ba\} \,\}.
\]
When the order of the two colors is not known we write
$F(u, \{a,b\})$.  In $S - u - a - b$ the colors $a$ and $b$ have odd
multiplicities, so $l \ge 1$.  The families partition the hypercubes: a
hypercube is determined by its rightmost mixed domino, its trailing word, and
the hypercube of the prefix.  The hypercube of $t \cdot p' \cdot u$ is
the hypercube of $t$ times $\{ab, ba\}$, and its rungs are the edges
$\{t \cdot ab \cdot u,\; t \cdot ba \cdot u\}$.

For each edge $\{t_1, t_2 = s_{2i+1}(t_1)\}$ of the matching
$M(S - u - a - b)$ of Lemma~\ref{tools:lemmaM}, take the stitch
$q(t_1 \cdot ab \cdot u;\, i, l)$.  It is a stitch since
$i+1 \le l-1$ and $l$ is a mixed domino of $t_1 \cdot ab \cdot
u$.  By Lemma~\ref{tools:lemmaM} these \emph{rung stitches}
of a family form a spanning tree of the family's hypercubes, and they meet
each hypercube in pairwise distinct rungs.  The smallest
instance is Figure~\ref{intro:stitch-figure}.

\subsection{Descent and root stitches}

Two further kinds of stitches connect the families.  In both of them
the exchanged colors differ and the direction is a mixed domino lying
outside the two exchanged dominoes, so it is a stitch of
$N(S)$;
and in each of its two hypercubes it uses an edge in a direction that is
never a rung, since the rung directions of a family are $l$ and
$l+1$, respectively $m-1 \ge 2$ in the root case below.  We describe
them by the family they start in; the stitch leads to a family closer
to the root family $L_{01} = F(\emptyset, 01)$.

\emph{Descent stitches} ($u = (yy)u'$ non-empty).  Let $e$ be
the \emph{largest} color of $\{a,b\} \setminus \{y\}$ and let $e'$ be
the other color of $\{a,b\}$.  (Taking the largest color matters
only in Lemma~\ref{all-even:lemmaI}, where it keeps the descent
stitches apart from the root stitches.)  Let $w$ be any stutter of
$S - u - ab - ab$ (for instance the sorted one) and put
\[
  v_F = w \cdot ab \cdot e'e \cdot yy \cdot u',
  \qquad
  v'_F = w \cdot ab \cdot e'y \cdot ey \cdot u'.
\]
The stitch $q(v_F;\, l, l-1)$ swaps $e$ with $y$ and lands in $v'_F$,
which lies in the family $F(u', \{e,y\})$, a family with one
double fewer; Figure~\ref{intro:stitch-figure} shows this
stitch for $u = (00)$.

\emph{Root stitches} ($u$ empty; $L_{ab} = F(\emptyset, ab)$ with
$\{a,b\} \ne \{0,1\}$).  Let $c$ be the smallest color not in
$\{a,b\}$; thus $c = 0$ if $a \ne 0$, and $c = 1$ if $a = 0$ (then
$b \ge 2$ because $\{a,b\} \ne \{0,1\}$).  Let $w'$ be any stutter
of $S - aa - bb - cc$ and put
\[
  v_{ab} = ab \cdot w' \cdot cc \cdot ba,
  \qquad
  v'_{ab} = ab \cdot w' \cdot cb \cdot ca.
\]
The stitch $q(v_{ab};\, m-2, 0)$ swaps $c$ with $b$ and lands in
$v'_{ab}$, which lies in the root family with last domino $\{c,a\}$:
$L_{0a}$ if $a \ne 0$, and $L_{01}$ if $a = 0$.  Arity at least $3$
gives the color $c$ and $m \ge 3$; since every multiplicity is even
and $c \notin \{a,b\}$, the stutter $w'$ exists.

Every family other than $L_{01}$ owns exactly one such stitch: a
descent stitch if $u$ is non-empty, and a root stitch otherwise.
It leads to a family with shorter $u$, or from a root family
$L_{ab}$ to $L_{0a}$ or $L_{01}$, so every root family reaches $L_{01}$
in at most two steps.  Together with the rung stitches, the descent
and root stitches therefore form a spanning tree of all hypercubes.  In
arity $2$ all families are $F(u, 01)$, the set
$\{a,b\} \setminus \{y\}$ has one color, and there are no root
stitches; the tree then consists of the rung stitches alone.

\begin{lemma}\label{all-even:lemmaI}
Every hypercube meets at most one descent or root stitch.
\end{lemma}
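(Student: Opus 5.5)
The plan is to recover each descent or root stitch from the skeleton of a hypercube it meets. Every such stitch meets exactly two hypercubes: the \emph{source} hypercube, containing $v_F$ or $v_{ab}$, which lies in the owning family, and the \emph{target} hypercube, containing $v'_F$ or $v'_{ab}$. The stitch's two hypercube edges lie in these two hypercubes. So the lemma follows from three claims: (i) no hypercube is the source of two stitches; (ii) no hypercube is a source of one stitch and a target of another; (iii) no hypercube is the target of two stitches. I would first write down the four skeletons explicitly, since $w$, $w'$ and $u'$ consist of doubles.

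The source skeletons are as follows. For a descent stitch, $v_F$ has skeleton $w\,|\,\{a,b\}\,|\,\{a,b\}\,|\,yy\,|\,u'$. Its only mixed dominoes are the adjacent dominoes $l-1,l$, both with content $\{a,b\}$, and they are followed by at least one double. For a root stitch, $v_{ab}$ has skeleton $\{a,b\}\,|\,w'\,|\,cc\,|\,\{a,b\}$. Its only mixed dominoes are $0$ and $m-1$, again both $\{a,b\}$, and domino $m-2$ is the double $cc$ with $c\notin\{a,b\}$.

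The target skeletons are as follows. For a descent stitch, $v'_F$ has mixed dominoes $l-1$ with content $\{a,b\}$, $l$ with content $\{e',y\}$, and $l+1$ with content $\{e,y\}$. When $y\in\{a,b\}$ we have $e'=y$, so domino $l$ is the double $yy$, and the mixed dominoes are then $l-1,l+1$, both $\{a,b\}$. For a root stitch, $v'_{ab}$ has mixed dominoes $0$, $m-2$, $m-1$ with contents $\{a,b\},\{c,b\},\{c,a\}$.

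Claim (i) is immediate. A source hypercube lies in the owning family, and every family owns at most one stitch.

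For (ii) I compare shapes. A source has exactly two mixed dominoes, both with equal content. A three-mixed target therefore can never be a source. The only two-mixed targets are descent targets with $y\in\{a,b\}$. Their mixed dominoes $l-1,l+1$ are non-adjacent, so they differ from descent sources, whose mixed dominoes are adjacent. They could match a root source only if $l-1=0$, $l+1=m-1$ and $u'$ is empty. In that case the middle double is $yy$ with $y\in\{a,b\}$, whereas the root source has $cc$ with $c\notin\{a,b\}$, so they still differ.

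For (iii), a target skeleton determines the stitch within its own kind. For a descent target, $l$ and $u'$ are read off directly, $\{a,b\}$ is the content of domino $l-1$, and $y$ is the common color of dominoes $l,l+1$ (or the middle double). This determines $u$ and hence the owning family. For a root target, $\{a,b\}$ is the content of domino $0$, which determines $L_{ab}$.

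I expect the main obstacle to be a descent target coinciding with a root target. This can only happen when $m=3$, $u'$ is empty and $l=1$. Then the skeletons are $\{a,b\}\,|\,\{e',y\}\,|\,\{e,y\}$ and $\{a,b\}\,|\,\{c,b\}\,|\,\{c,a\}$ with the same first domino. Matching the last two dominoes forces $y=c$, $e'=b$ and $e=a$, where $a<b$. But $e$ was chosen as the \emph{largest} color of $\{a,b\}\setminus\{y\}=\{a,b\}$, so $e=b$, a contradiction. This is exactly where the choice of the largest color is used. I would finish by checking the boundary case $y\in\{a,b\}$ against root targets: there the descent target has only two mixed dominoes while the root target has three, so they cannot coincide.
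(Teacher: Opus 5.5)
Your proof is correct and follows essentially the paper's route. Both arguments show that the words $v_F$, $v_{ab}$, $v'_F$, $v'_{ab}$ have pairwise distinct skeletons, recover the owning family from a target skeleton, and use the choice of $e$ as the largest color precisely to rule out a descent target coinciding with a root target. The only difference is bookkeeping: you separate sources from targets by the number and adjacency of mixed dominoes (and the middle double), whereas the paper uses the overlap pattern between the rightmost mixed domino $\Lambda$ and the domino $\Lambda'$ before it.
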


\begin{proof}
The hypercubes involved are those of the words $v_F$, $v'_F$,
$v_{ab}$, $v'_{ab}$, so it suffices to show that these words
have pairwise distinct skeletons.  In such a word let $\Lambda$
be the rightmost mixed domino and $\Lambda'$ the domino before it,
both taken as unordered pairs.  Both, and the doubles after $\Lambda$, can be
read off the skeleton.  By construction:
\begin{itemize}[itemsep=0pt]
  \item in $v_F$, $\Lambda' = \Lambda = \{a,b\}$;
  \item in $v_{ab}$, $\Lambda' = cc$ with $c \notin \Lambda = \{a,b\}$;
  \item in $v'_F$, $\Lambda' = \{e',y\}$ and $\Lambda = \{e,y\}$ have
  exactly the color $y$ in common;
  \item in $v'_{ab}$, $\Lambda' = \{c,b\}$ and $\Lambda = \{c,a\}$
  have exactly the color $c$ in common.
\end{itemize}
So a $v_F$, a $v_{ab}$, and a $v'_F$ or $v'_{ab}$ never share a
skeleton.  The
$v_F$ lie in distinct families $F$, so they are pairwise distinct, and
so are the $v_{ab}$.  For a word of the last two kinds, let $y$
be the color common to $\Lambda'$ and $\Lambda$, let $e$ be the other
color of $\Lambda$, let $e'$ be the other color of $\Lambda'$
($e' = y$ if $\Lambda'$ is a double), and let $u'$ be the trailing
word after $\Lambda$.  If the word is $v'_F$, then
$F = F((yy)u', \{e,e'\})$.  If it is $v'_{ab}$, then $u'$ is
empty and $(c,a,b) = (y,e,e')$.  So the $v'_F$ are pairwise distinct,
and so are the $v'_{ab}$.  Finally, $v'_F = v'_{ab}$ would give
$F = F(cc, ab)$ with $c \notin \{a,b\}$ and $e = a < b$.  But $e$ is
the largest color of $\{a,b\} \setminus \{c\} = \{a,b\}$, which is
$b$.
\end{proof}

\begin{proof}[Proof of Theorem~\ref{thm:all-even}]
Let $\mathcal{G}$ be the set of all rung, descent and root stitches.
As a graph on the hypercubes it is a spanning tree, which is the tree
condition of Lemma~\ref{prelim:tree-gluing}.  The rung stitches meet
each hypercube in pairwise distinct rungs (Lemma~\ref{tools:lemmaM}), and
at most one further stitch meets it, in a non-rung edge
(Lemma~\ref{all-even:lemmaI}).  For a hypercube met by such a non-rung
stitch edge $f$, choose its boustrophedon cycle in the rung direction
through $f$ (Lemma~\ref{cube:boustrophedon}); for every other hypercube
choose any boustrophedon cycle in its rung direction.  Each such
cycle contains every rung of its hypercube and the at most one non-rung
unit edge.  By the two facts just cited, and because a non-rung edge
is never a rung, the unit edges of the stitches are pairwise distinct.
The gluing lemma therefore applies, and Lemmas~\ref{cube:boustrophedon}
and~\ref{prelim:tree-gluing} give a Hamiltonian cycle of $N(S)$.
\end{proof}

Every choice above can be fixed by an explicit rule --- for instance
the lexicographically first free word in
Lemma~\ref{tools:lemmaM}, and the sorted stutters $w$, $w'$ ---
and any admissible choice works.  The one-odd section uses one more
fact about the stitches.

\begin{proposition}[Property \textup{(P)}]\label{all-even:propertyP}
Every hypercube met by a descent or root stitch has at most three mixed
dominoes.  Hence a hypercube of dimension at least $4$ is met only by rung
stitches, and only in rungs.
\end{proposition}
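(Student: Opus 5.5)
The plan is to inspect the four kinds of words $v_F$, $v'_F$, $v_{ab}$, $v'_{ab}$ one at a time. By Lemma~\ref{all-even:lemmaI}, these are exactly the words whose hypercubes are met by a descent or root stitch, since a stitch $q(p;i,j)$ meets precisely the hypercubes of $p$ and $s_{2i+1}(p)$. Each word is built from a stutter ($w$ or $w'$), which contributes only doubles, plus at most two or three explicitly written dominoes and a trailing word $u$ or $u'$ of doubles. For each word I will therefore count the mixed dominoes among the explicitly written ones. This count equals the dimension by Lemma~\ref{cube:lemma1}(a).

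\emph{Descent words.} In $v_F = w \cdot ab \cdot e'e \cdot yy \cdot u'$ the stutter $w$ and the trailing $yy\cdot u'$ are doubles, so the only candidate mixed dominoes are $ab$ and $e'e$. Since $\{e,e'\} = \{a,b\}$, both are mixed, and the dimension is $2$. In $v'_F = w \cdot ab \cdot e'y \cdot ey \cdot u'$ the candidates are the three dominoes $ab$, $e'y$ and $ey$, so the dimension is at most $3$. The dimension is in fact exactly $2$ or $3$, depending on whether $e' = y$, but only the upper bound is needed.

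\emph{Root words.} In $v_{ab} = ab \cdot w' \cdot cc \cdot ba$ only $ab$ and $ba$ are mixed, so the dimension is $2$. In $v'_{ab} = ab \cdot w' \cdot cb \cdot ca$ the candidates are $ab$, $cb$ and $ca$, all mixed because $c \notin \{a,b\}$, so the dimension is exactly $3$.

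Together these give at most three mixed dominoes for every hypercube met by a descent or root stitch. The second sentence then follows by contraposition: a hypercube of dimension at least $4$ meets no descent or root stitch, so it meets only rung stitches. A rung stitch meets its hypercubes in edges in direction $l$, the rightmost mixed domino, and those edges are rungs by definition. I expect no real obstacle. The only point needing care is to confirm that a stitch involves only the hypercubes of its base word and of its cross image, so that these four word types exhaust the cases.
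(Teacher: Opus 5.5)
Your proof is correct and follows the paper's approach. You read off the mixed dominoes of $v_F$, $v'_F$, $v_{ab}$, $v'_{ab}$ (two, two or three, two, three), using that $w$, $w'$ and the trailing doubles contribute none, and your contrapositive for the second sentence, with rung stitches using only direction-$l$ edges, is exactly the intended ``hence''. One small attribution point: that a stitch touches only the hypercubes of $p$ and $s_{2i+1}(p)$ follows directly from Definition~\ref{cube:stitch}, since its other two edges are even, rather than from Lemma~\ref{all-even:lemmaI}.
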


\begin{proof}
$v_F$ has the mixed dominoes $ab$ and $e'e$; $v'_F$ has $ab$, $ey$, and
possibly $e'y$; $v_{ab}$ has two ($ab$ and $ba$); and $v'_{ab}$ has
three ($ab$, $cb$, $ca$, with $c \notin \{a,b\}$).  The hypercube of such a
word has exactly these mixed dominoes, since $w$ and $w'$ are
stutters and the remaining dominoes are doubles.
\end{proof}

\section{The one-odd case: exactly one odd multiplicity}
\label{sec:one-odd}

\begin{theorem}[One-odd theorem]\label{thm:one-odd}
Let $S$ be a signature of arity at least $3$ with exactly one odd
multiplicity.  Then $N(S)$ admits a Hamiltonian cycle.
\end{theorem}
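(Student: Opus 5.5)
We split the words of $S$ by their last color, following the outline in the introduction.  Let $x$ be the color of odd multiplicity, so $n$ is odd and every stutter ends in $x$.  For each active color $z$ the block $T_z = V(S^z)\cdot z$ is the set of words ending in $z$; appending $z$ does not move any dominoes, so $T_z$ induces a copy of $G(S^z)$.
\begin{itemize}
  \item \emph{The block of $x$.}  $S^x$ is all even, and a word of $T_x$ is a stutter of $S$ exactly when its prefix is a stutter of $S^x$.  So the non-stutter part of $T_x$ is $N(S^x)\cdot x$, and Theorem~\ref{thm:all-even} gives it a cycle $H_x$.
  \item \emph{The blocks of $z \ne x$.}  $S^z$ has exactly two odd multiplicities, at $x$ and $z$, so $T_z$ contains no stutter.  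Its arity is at least $3$, because $S$ has at least three colors and removing one copy of $z$ leaves them all active unless $k_z = 1$.  That remaining subcase needs a separate check: when $k_z=1$, either the arity of $S^z$ is still at least $3$ or it is binary with an odd multiplicity, and the latter must be ruled out or handled separately.  Theorem~\ref{inherited:stachowiak} then gives a cycle $H_z$, unless $S^z$ is a permutation of $(2j,1,1)$.  That exception happens exactly when $S$ is a permutation of $(2j,2,1)$, and we defer it to the family construction of Section~\ref{sec:family}.
\end{itemize}

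Next we glue the blocks with Lemma~\ref{prelim:tree-gluing}, using the blocks as units.  The tree is a star centred at $T_x$: each $T_z$ is attached by a gadget whose cross edges are the swaps at position $n-2$.  Such a swap exchanges the trailing $z$ with an $x$ in the position before it, and it preserves the rest of the word.  So for a word $q = t\cdot x z$ with $q$ in $T_z$, and a neighbor $q' = s_i(q)$ inside $T_z$ with $i \le n-4$, the pair $\{q,q'\}$ is parallel to $\{t\cdot zx,\ s_i(t)\cdot zx\}$ in $T_x$.  This is the parallel-pair lemma the text calls Lemma~\ref{one-odd:parallel}.

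The difficulty is that $H_z$ is arbitrary, so we cannot choose which edge of $T_z$ it uses.  We therefore force one by a degree argument at a \emph{shifted stutter}: a word $t\cdot x z$ in $T_z$ whose prefix has only double shifted dominoes, so that its only neighbors are the intra-domino flips.
\begin{itemize}
  \item \emph{Choosing the forcing vertex.}  We pick $t\cdot x$ so that its last domino is mixed and, apart from that domino, the word has degree exactly $2$ in $T_z$.  Then $H_z$ must use both of its edges, one of which is a flip $s_i$ with $i \le n-4$ and so has a parallel partner in $T_x$.
  \item \emph{Making the partner available.}  The partner edge in $T_x$ is an even flip of $t\cdot zx$ inside its hypercube.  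We choose $t$ to use every color of $S^x$, so that this hypercube has dimension at least $4$ (Lemma~\ref{one-odd:anchored}).  By Property~(P), Proposition~\ref{all-even:propertyP}, such a hypercube is met only by rung stitches and only in rungs.  The boustrophedon cycle of that hypercube can then be chosen, via the prescribed edge $f$ of Lemma~\ref{cube:boustrophedon}, to contain the needed partner edge.  This works if the partner is a rung or lies in a non-rung direction; a non-rung partner is not used by any stitch.
  \item \emph{Distinctness across blocks.}  Different $z$ give partner edges with different trailing colors in different hypercubes, so the unit edges of all the gadgets are pairwise distinct.
\end{itemize}

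The main obstacle is making these forcing choices consistent.  The boustrophedon cycle of one hypercube can prescribe only one non-rung edge, so the shifted stutters for different $z$ must land in different hypercubes of $T_x$.  Each of those hypercubes must also have dimension at least $4$, which can fail for small signatures with few colors or small multiplicities.  We would first try taking the prefix $t$ as a shifted stutter ending in $z$, which gives distinct hypercubes automatically.  Any leftover small cases, including arity exactly $3$ with tiny multiplicities, would be settled by explicit cycles such as the certificates in the appendix.  Finally, the star of gadgets satisfies the tree condition, and Lemma~\ref{prelim:tree-gluing} yields a Hamiltonian cycle of $N(S)$.
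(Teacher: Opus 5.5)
Your architecture is the paper's: blocks by last color, a star of $\rho$-gadgets centred at $T_x$, forcing at the shifted stutter $v_z = z\,w\,x\,z$, and re-routing the boustrophedon cycle of $C_z$ through the partner edge using Property~(P) and Lemma~\ref{one-odd:anchored}. The gap is the step that makes this work. You never show that, outside finitely many signatures, one can choose shifted stutters with $\dim C_z \ge 4$ and pairwise distinct $C_z$.

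\begin{itemize}
\item Using every color of $S^x$ gives only $\dim C_z \ge K(S^x)$, by the dimension rule. Lemma~\ref{one-odd:anchored} is about uniqueness of anchored words, not about dimension.
\item For $K(S^x)=3$, the grouped choice $w = z^{k_z-2}c^{k_c}d^{k_d}$ has only three color changes. Then Property~(P) no longer keeps descent and root stitches off the non-rung edges of $C_z$, and a boustrophedon cycle can be prescribed only one non-rung edge. You need explicit choices with exactly four changes: $w = c^{k_c}z^{k_z-2}d^{k_d}$ if $k_z\ge 4$, or $w = cc\,d^{k_d}c^{k_c-2}$ if $k_z=2$ and $k_c\ge 4$. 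Such choices exist for every $z$ unless $S^x=\sig{2,2,2}$.
\item For $K(S^x)=2$, that is $S=\sig{2p,2q,1}$, your claim that distinct hypercubes come ``automatically'' is false. The words $01100110$ and $10011001$ are both anchored and share a hypercube (Remark~\ref{one-odd:anchored-sharp}).
\end{itemize}

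Both of these are infinite families, so deferring ``leftover small cases'' to explicit cycles does not close the proof. The only genuinely finite leftover is $S^x=\sig{2,2,2}$, together with $\sig{2,2,1}$ inside the family.

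For $S=\sig{2p,2q,1}$ with $p,q\ge 2$, the paper gives up dimension $4$ altogether. It takes $v'_z = z^{k_z-1}y^{k_y}z$, whose hypercube $C_z$ is a $4$-cycle. Then $H_{C_z}=C_z$ already contains both edges at $v'_z$, so no re-routing and no distinctness are needed. Moreover $\deg_{T_z}(v_z)=1+1$ forces the unique non-exit edge.

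Your forcing bullet also conflates the two regimes. You require degree exactly $2$ at the forcing vertex, but $\dim C_z\ge 4$ gives degree at least $3$. All that is needed is Lemma~\ref{one-odd:shifted}(iii): a Hamiltonian cycle of $T_z$ uses two edges at $v_z$, at most one of which is the exit $s_{n-3}$, so it uses some non-exit flip. The boustrophedon cycle through that flip's partner is then chosen after $K_z$ is fixed.

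Finally, your $k_z=1$ subcase is vacuous. Since $x$ is the only odd color, every $z\ne x$ has $k_z\ge 2$, so $S^z$ keeps the whole support and has arity at least $3$.
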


Throughout this section $S$ has arity at least $3$, exactly one odd
multiplicity, at the color $x$, and $n = \lvert S \rvert$ is odd.
The signature $S^x = S - x$ (Section~\ref{prelim:signatures}) is all
even of length $n-1$.  Let $Z$ be the set of colors of $S$ other than
$x$.  Every $z \in Z$ has $k_z \ge 2$.  The arity of
$S^x$ equals $K(S)$ if $k_x \ge 3$, and $K(S) - 1$ if $k_x = 1$;
so $K(S^x) \ge 2$.  Write $n = 2m+1$, so that $S^x$ has length
$2m$, and let $\rho = s_{n-2}$, the swap of the last two colors.
The proof splits $N(S)$ by the last color, uses the all-even theorem
inside the block ending in $x$, forces one edge inside each block
ending in another color, and glues with parallel pairs whose
prescribed edge is read off the all-even cycle.

\subsection{Blocks and parallel pairs}

For a color $c$, the \emph{block} of $c$ consists of the non-stutter
words of $S$ ending in $c$; deleting that final $c$ maps the
block bijectively to its \emph{shortened form}, a set of words
of $S^c$.  The next lemma identifies the shortened forms and shows
that the blocks are joined only by the swaps of the last two colors.

\begin{lemma}[Last-color blocks]\label{one-odd:blocks}
The non-stutters of $S$ ending in $x$ form the block
$T_x = N(S^x) \cdot x$.  For $z \in Z$, the non-stutters ending in
$z$ form the block $T_z = G(S^z) \cdot z$, and $S^z$ has exactly two
odd multiplicities, $k_x$ and $k_z - 1$.  The edges of $N(S)$ between
different blocks are exactly the $\rho$-edges.
\end{lemma}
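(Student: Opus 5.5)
The plan is to prove the three assertions of Lemma~\ref{one-odd:blocks} directly from the definitions, using that $n = 2m+1$ is odd. Then the last position $n-1$ is the unpaired trailing position, and it lies in no domino. A word of $S$ is a stutter exactly when positions $0,\dots,n-2$ form doubles. This says that the prefix obtained by deleting the last color is a stutter of the even-length signature $S^c$, where $c$ is that last color.

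\emph{The block of $x$.} Deleting the final $x$ maps the words of $S$ ending in $x$ bijectively onto $V(S^x)$. A word $t\cdot x$ is a stutter iff $t$ is a stutter of $S^x$, so the non-stutters ending in $x$ are exactly $N(S^x)\cdot x$. Edges are preserved, because a swap $s_i$ with $i \le n-3$ acts on $t\cdot x$ as $s_i$ acts on $t$. Hence the induced subgraph is a copy of $N(S^x)$.

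\emph{The block of $z \in Z$.} Since $k_z$ is even and $k_x$ is odd, $S^z$ has the odd multiplicities $k_z - 1$ and $k_x$, and every other multiplicity stays even. I then cite the remark after Theorem~\ref{inherited:stachowiak}: a stutter's signature has at most one odd multiplicity. So $S^z$ has no stutters, and neither does any word $t\cdot z$, because its prefix would be a stutter of $S^z$. Every word ending in $z$ is therefore a non-stutter, and the block is $G(S^z)\cdot z$, with edges identified as before.

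\emph{Edges between blocks.} A swap $s_i$ with $i \le n-3$ does not touch position $n-1$, so it preserves the last color and stays inside a block. The only swap that changes the last color is $s_{n-2} = \rho$. Every valid $\rho$-swap between two non-stutters is an edge of $N(S)$, and it moves the word to the block of the other color. Conversely, every edge of $N(S)$ is some valid $s_i$ between non-stutters. So the edges between different blocks are exactly the $\rho$-edges.

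No step is a real obstacle. The only point needing care is the stutter characterization at odd length, namely that the trailing single position is not part of a domino, together with the parity count showing $S^z$ has exactly two odd multiplicities.
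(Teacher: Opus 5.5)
Your proof is correct and follows essentially the same route as the paper. It uses the odd-length stutter characterization ($u \cdot c$ is a stutter exactly when $u$ is), the parity count showing that $S^z$ has exactly two odd multiplicities and hence no stutters, and the fact that only $\rho = s_{n-2}$ changes the last color; you are slightly more explicit than the paper in checking that the edges inside each block match those of $N(S^x)$ and $G(S^z)$.
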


\begin{proof}
Since $n$ is odd, the last color is unpaired and $u \cdot c$ is a
stutter exactly when $u$ is.  The signature $S^x$ is all even, so
$T_x = N(S^x) \cdot x$.  For $z \in Z$ the signature $S^z$ has
exactly two odd multiplicities, so it has no stutters and
$T_z = G(S^z) \cdot z$.  A swap $s_p$ with $p \le n-3$ keeps the
last color and so stays inside a block; $\rho = s_{n-2}$ changes the
last color, so every edge between different blocks is a $\rho$-edge.
Conversely, a $\rho$-edge between two blocks is an edge of $N(S)$:
both its ends are non-stutters, because a word ending in $x$
is a stutter exactly when its prefix in $S^x$ is, and a
word ending in some $z \in Z$ lies in $S^z$, which has no
stutters.
\end{proof}

\begin{lemma}[Parallel pairs]\label{one-odd:parallel}
Let $w$ be a word of $S$ with $w_{n-2} \ne w_{n-1}$, and let
$i \le n-4$ with $w_i \ne w_{i+1}$.  Then $s_i$ and $\rho$ commute,
and $w$, $s_i(w)$, $\rho(s_i(w))$, $\rho(w)$ is a $4$-cycle of
$G(S)$.  Its edges $f = \{w, s_i(w)\}$ and
$\rho(f) = \{\rho(w), \rho(s_i(w))\}$ form a parallel pair, with
cross edges $\{s_i(w), \rho(s_i(w))\}$ and $\{\rho(w), w\}$.  If all four words are non-stutters and $f$
and $\rho(f)$ lie in different units, this $4$-cycle is a gadget in
the sense of Lemma~\ref{prelim:tree-gluing}, with unit edges $f$ and
$\rho(f)$.
\end{lemma}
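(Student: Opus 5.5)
The plan is a direct verification from the definitions, in three parts: commutation, the $4$-cycle, and the gadget conditions.

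\emph{Commutation.} Since $i \le n-4$, the swap $s_i$ moves only positions $i, i+1 \le n-3$. The swap $\rho = s_{n-2}$ moves only positions $n-2, n-1$. These position sets are disjoint, so $s_i \rho = \rho s_i$ as maps on words.

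\emph{The $4$-cycle.} Each of the four steps must be a valid neighbor swap, that is, it must exchange two distinct colors.
\begin{itemize}
\item The step $w \swap s_i(w)$ is valid by the hypothesis $w_i \ne w_{i+1}$.
\item The step $w \swap \rho(w)$ is valid by the hypothesis $w_{n-2} \ne w_{n-1}$.
\item The step $s_i(w) \swap \rho(s_i(w))$ is valid because $s_i$ leaves positions $n-2, n-1$ untouched, so $s_i(w)$ still carries distinct colors there.
\item The step $\rho(w) \swap \rho(s_i(w)) = s_i(\rho(w))$ is valid because $\rho$ leaves positions $i, i+1$ untouched, so $\rho(w)$ still carries distinct colors there.
\end{itemize}
The four words are pairwise distinct: $s_i$ and $\rho$ each change a different pair of positions nontrivially, and the same holds for their composite. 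Hence $w, s_i(w), \rho(s_i(w)), \rho(w)$ is a $4$-cycle of $G(S)$.

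\emph{Parallel pair.} Apply Definition~\ref{prelim:parallel-edges} to $f = \{w, s_i(w)\}$ and $\rho(f) = \{\rho(w), \rho(s_i(w))\}$. The required connecting edges $w \swap \rho(w)$ and $s_i(w) \swap \rho(s_i(w))$ are the $\rho$-edges just exhibited. So $f$ and $\rho(f)$ are parallel, and these two $\rho$-edges are the cross edges.

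\emph{Gadget.} Assume now that all four words are non-stutters, so the cycle lies in $N(S)$. Assume also that $f$ and $\rho(f)$ lie in different units, with $f$ and $\rho(f)$ each a unit edge of its own unit. Then the $4$-cycle has exactly the shape required of a gadget in Lemma~\ref{prelim:tree-gluing}: two unit edges in two different units, and two cross edges each joining those units. The cross edges do join the two units, since each connects an endpoint of $f$ to an endpoint of $\rho(f)$.

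The definition of a gadget also asks that the unit edges lie on the chosen cycles $H_U$. That is a property of how a particular gadget is used, not of the $4$-cycle itself, so I will read the lemma's ``gadget'' clause as ``gadget provided $f$ and $\rho(f)$ are on the $H_U$''. This interpretive point is the only subtlety. Everything else is the routine check that swaps on disjoint positions commute and preserve each other's validity, so I expect no real obstacle.
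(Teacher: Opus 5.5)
Your proof is correct and follows the paper's argument. The paper's proof consists of exactly the observations you expand: $s_i$ and $\rho$ move disjoint positions, so each still exchanges two distinct colors in every word of the $4$-cycle, and each cross edge joins an end of $f$ to an end of $\rho(f)$. Your check that the four words are distinct fills in a step the paper leaves implicit.

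Your reading of the gadget clause is also the paper's. The requirement that $f$ and $\rho(f)$ lie on the chosen cycles $H_U$ is verified where the lemma is applied (Propositions~\ref{one-odd:assembly} and~\ref{family:prop}), not in this lemma.
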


\begin{proof}
The swaps $s_i$ and $\rho$ move disjoint positions, and each of them
exchanges two distinct colors in every word involved.  Each
cross edge joins an end of $f$ to an end of $\rho(f)$.
\end{proof}

\subsection{Shifted stutters and the assembly}

For $z \in Z$ let
$w = e_1e_1 e_2e_2 \cdots e_{m-1}e_{m-1}$ be a stutter of
$S^x - zz$ whose last color $\omega = e_{m-1}$ is not $z$.  Such a
stutter exists because $S^x - zz$ contains a color other than $z$.
Put
\[
  v'_z = z \cdot w \cdot z, \qquad v_z = z \cdot w \cdot x \cdot z,
\]
so that $v'_z$ is a word of $S^x$, $v_z$ is one of
$T_z$, and $\rho(v_z) = v'_z \cdot x$.  We call $v'_z$ a \emph{shifted
stutter of type $z$}, and write $C_z$ for its hypercube in $N(S^x)$.

\begin{lemma}[Shifted stutters]\label{one-odd:shifted}
\mbox{}
\begin{enumerate}[label=\textup{(\roman*)},itemsep=2pt]
  \item $v'_z$ is anchored, and domino $i$ carries $(e_i, e_{i+1})$ for
  $0 \le i \le m-1$, where $e_0 = e_m = z$.  Hence, by the
  \emph{dimension rule}, the dimension of $C_z$ is the number of
  color changes of the cyclic sequence
  $(z, e_1, \ldots, e_{m-1})$, and
  $\dim C_z \ge K(S^x) \ge 2$.  The last domino $(\omega, z)$ is
  mixed, so the rungs of $C_z$ are its edges in direction $m-1$.
  \item The edges of $v_z$ in $T_z$ are the swap $s_{n-3}$ of
  $\omega$ and $x$, present if and only if $\omega \ne x$ (the
  \emph{exit}), and, for each non-last direction $i$ of $C_z$
  ($i < m-1$, $e_i \ne e_{i+1}$), the swap $s_{2i}$.  For such an
  edge $f = \{v_z, s_{2i}(v_z)\}$ we have $\rho(f) = g_i \cdot x$,
  where $g_i = \{v'_z, s_{2i}(v'_z)\}$ is the edge of $C_z$ at $v'_z$
  in direction $i$; the four words are non-stutters, so
  $(f, g_i \cdot x)$ is a parallel pair meeting the gadget condition
  of Lemma~\ref{one-odd:parallel} whenever $f$ and $g_i \cdot x$ lie
  in different units.  We call $g_i$ the \emph{partner} of $f$.
  \item $\deg_{T_z}(v_z) = (\dim C_z - 1) + [\omega \ne x]$.  Every
  Hamiltonian cycle of $T_z$ uses a non-exit edge at $v_z$, and uses
  both edges at $v_z$ if the degree is $2$.
\end{enumerate}
\end{lemma}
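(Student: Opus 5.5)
The plan is to read everything off the explicit form of $v'_z = z e_1 e_1 e_2 e_2 \cdots e_{m-1} e_{m-1} z$ and $v_z = z e_1 e_1 \cdots e_{m-1}e_{m-1} x z$, position by position.

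For (i), positions $2i-1, 2i$ of $v'_z$ carry $e_i e_i$ for $1 \le i \le m-1$. So every shifted domino is a double and $v'_z$ is anchored. Domino $i$ occupies positions $2i, 2i+1$ and therefore carries $(e_i, e_{i+1})$ with the conventions $e_0 = e_m = z$. By Lemma~\ref{cube:lemma1}(a) the dimension of $C_z$ is the number of mixed dominoes, that is, the number of indices $i$ with $e_i \ne e_{i+1}$. This is the number of color changes of the cyclic sequence $(z, e_1, \ldots, e_{m-1})$.

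For the lower bound, a cyclic sequence that visits $q$ distinct values changes color at least $q$ times when $q \ge 2$. Here every color of $S^x$ occurs, since $v'_z$ is a word of $S^x$. This gives $\dim C_z \ge K(S^x) \ge 2$. Since $\omega = e_{m-1} \ne z$, the last domino $(\omega, z)$ is mixed. It is therefore the rightmost mixed domino, and the rungs of $C_z$ lie in direction $m-1$.

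For (ii), list the valid swaps of $v_z$ at positions $p \le n-3$; these are exactly the edges inside $T_z$, by Lemma~\ref{one-odd:blocks}. Positions $0, \ldots, n-3$ of $v_z$ coincide with positions $0, \ldots, 2m-2$ of $v'_z$. Position $n-2 = 2m-1$ holds $x$ and position $n-1$ holds $z$.
\begin{itemize}
\item An odd $p \le 2m-3$ swaps inside a double $e_ie_i$, so it is invalid.
\item $p = n-3 = 2m-2$ swaps $\omega$ with $x$. This is the exit, valid iff $\omega \ne x$.
\item An even $p = 2i$ with $i \le m-2$ swaps $e_i$ with $e_{i+1}$, valid iff direction $i$ of $C_z$ exists.
\end{itemize}
For such an edge $f$, the swap $s_{2i}$ touches neither of the last two positions, so it commutes with $\rho$. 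Hence $\rho(f) = \{v'_z x, s_{2i}(v'_z)x\} = g_i \cdot x$. The hypotheses of Lemma~\ref{one-odd:parallel} hold: $x \ne z$ at the last two positions, and $i \le n-4$ with distinct colors at the swapped positions.

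The four words are non-stutters for two reasons. The words of $f$ end in $z$, and $S^z$ has no stutters. The words $v'_z$ and $s_{2i}(v'_z)$ contain the mixed domino $(\omega, z)$, so the words of $g_i \cdot x$ are non-stutters too.

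For (iii), the degree count follows from (ii). There are $\dim C_z - 1$ non-last directions, since the rung direction $m-1$ is excluded because its swap would move the final $z$. The exit adds one when $\omega \ne x$. A Hamiltonian cycle of $T_z$ uses exactly two edges at $v_z$, and at most one of them is the exit, so at least one is a non-exit edge. When the degree is $2$, both edges are used.

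The main obstacle is only bookkeeping. Two points need care. First, the rung direction $m-1$ must be shown to contribute no edge of $v_z$ inside $T_z$. Second, in (iii) the cycle must exist and have length at least $3$, which holds because $T_z$ has many words.
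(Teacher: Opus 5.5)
Your proposal is correct and follows essentially the same route as the paper: read off the dominoes and shifted dominoes of $v'_z$ and $v_z$ position by position, enumerate the valid swaps at positions $p \le n-3$, commute $s_{2i}$ past $\rho$, and count the two cycle edges at $v_z$, at most one of which is the exit. Your version is more explicit, for instance in the run-counting bound for cyclic color changes and in the non-stutter check through the mixed last domino. There is one harmless slip: you write $i \le n-4$ where you mean the swap position, $2i \le n-5$.
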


\begin{proof}
In $v'_z = z e_1e_1 \cdots e_{m-1}e_{m-1} z$ the positions $2i+1$ and
$2i+2$ carry $e_{i+1}e_{i+1}$, so $v'_z$ is anchored, and domino $i$
carries $(e_i, e_{i+1})$.  The sequence $z, e_1, \ldots, e_{m-1}$ uses every
color of $S^x$, so the cyclic sequence has at least $K(S^x)$
changes and $\dim C_z \ge K(S^x) \ge 2$.

In $v_z = z e_1e_1 \cdots e_{m-1}e_{m-1} x z$ the swaps $s_p$ with
$p \le n-3$ that exchange distinct colors have even $p$.  For
$i < m-1$ the swap $s_{2i}$ exchanges $e_i$ and $e_{i+1}$, as in
$v'_z$; note $2i \le 2m-4 = n-5$ here.  The swap $s_{2m-2} = s_{n-3}$
exchanges $\omega = e_{m-1}$ with $x$, which stands at position
$2m-1 = n-2$, where $v'_z$ has $e_m = z$.  For $2i \le n-5$,
Lemma~\ref{one-odd:parallel} applies, because $v_z[n-2] = x \ne z$.
The words of $f$ lie in $T_z$, which has no stutters, and
those of $g_i$ lie in the hypercube $C_z$ of the all-even signature
$S^x$.  For (iii): a Hamiltonian cycle has two edges at $v_z$, at
most one of which is the exit.
\end{proof}

\begin{proposition}[Assembly]\label{one-odd:assembly}
Let $S$ be as in Theorem~\ref{thm:one-odd}, not a permutation of $\sig{2j,2,1}$.
\begin{enumerate}[label=\textup{(\alph*)},itemsep=1pt]
  \item Each block $T_z$, $z \in Z$, has a Hamiltonian cycle, by the
  inherited theorem (Theorem~\ref{inherited:stachowiak}).
  \item For each $z \in Z$ let $K_z$ be any Hamiltonian cycle of
  $T_z$, and let $v'_z$ be a shifted stutter of type $z$.  Suppose
  $N(S^x)$ has a Hamiltonian cycle $\Theta$ that contains, for
  every $z$, the partner $g_z$ of some non-exit edge $f_z$ of $K_z$
  at $v_z$.  Then $N(S)$ has a Hamiltonian cycle.
\end{enumerate}
\end{proposition}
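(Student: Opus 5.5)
For part (a) I check the hypotheses of Theorem~\ref{inherited:stachowiak} for $S^z$, $z \in Z$. By Lemma~\ref{one-odd:blocks}, $S^z$ has exactly two odd multiplicities, $k_x$ and $k_z-1$. Its arity equals $K(S) \ge 3$, because $k_z \ge 2$ keeps $z$ active. It remains to rule out that $S^z$ is a permutation of $\sig{2j,1,1}$. If it were, the two odd multiplicities would both be $1$, so $k_x = 1$ and $k_z = 2$, and there would be exactly one further color, of multiplicity $2j$. Then $S$ would be a permutation of $\sig{2j,2,1}$, which is excluded. So $G(S^z) \cdot z = T_z$ has a Hamiltonian cycle.

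For part (b) I apply Lemma~\ref{prelim:tree-gluing}. The units are the blocks: $T_x = N(S^x)\cdot x$ with $H_{T_x} = \Theta \cdot x$, and each $T_z$ with $H_{T_z} = K_z$. By Lemma~\ref{one-odd:blocks} these partition the vertices of $N(S)$. For each $z$ the gadget is the $4$-cycle of Lemma~\ref{one-odd:shifted}(ii), with unit edges $f_z \in K_z$ and $\rho(f_z) = g_z \cdot x \in \Theta\cdot x$, and with $\rho$-cross edges. Its four words are non-stutters, and the two unit edges lie in the different units $T_z$ and $T_x$, so it is a gadget. Viewed as edges between units, these gadgets form a star centred at $T_x$, hence a spanning tree of the units. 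No unit is a digon.

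The only remaining hypothesis is that all unit edges are pairwise distinct. The edges $f_z$ lie in the distinct blocks $T_z$, so they are distinct from each other. The edges $g_z\cdot x$ lie in $T_x$, so they are distinct from every $f_{z'}$. The step that needs an argument, and the main obstacle, is $g_z \ne g_{z'}$ for $z \ne z'$. I settle it through anchoring. The edge $g_z$ contains the anchored word $v'_z$. Its other end $s_{2i}(v'_z)$ carries $(e_{i+1},e_i)$ in domino $i$, so its shifted domino at positions $2i-1,2i$ or at $2i+1,2i+2$ is mixed, and it is not anchored. Hence $v'_z$ is the unique anchored end of $g_z$. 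Moreover $v'_z$ begins and ends with $z$, so $v'_z \ne v'_{z'}$ and therefore $g_z \ne g_{z'}$.

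With all hypotheses verified, Lemma~\ref{prelim:tree-gluing} yields a Hamiltonian cycle of $N(S)$.
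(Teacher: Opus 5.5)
Your proof is correct and follows the paper's own proof. Part (a) is the same check: $S^z$ keeps the support of $S$, and it could be a permutation of $\sig{2j,1,1}$ only if $S$ were a permutation of $\sig{2j,2,1}$. Part (b) is the same star of parallel-pair gadgets centred at $\Theta\cdot x$, fed into Lemma~\ref{prelim:tree-gluing}. The only difference is in showing $g_z \ne g_{z'}$: the paper simply notes that both words of $g_z\cdot x$ end in $z\,x$, since $s_{2i}$ with $i<m-1$ never moves the last color of $v'_z$. That is shorter than your anchored-end argument, though yours is also valid.
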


\begin{proof}
\emph{(a)} For $z \in Z$ the signature $S^z$ has exactly two odd
multiplicities, $k_x$ and $k_z - 1$.  Since $k_z \ge 2$, it has the
support of $S$, so its arity is at least $3$.  If it were a
permutation of $\sig{2i,1,1}$, then $k_x = 1$ and $k_z = 2$ with
arity $3$, which would make $S$ a permutation of $\sig{2i,2,1}$.
So the inherited theorem applies.

\emph{(b)} The units are the block $T_x$ carrying the cycle
$\Theta \cdot x$ and the blocks $T_z$ carrying the cycles $K_z$
(Lemma~\ref{one-odd:blocks}).  The gadgets are the $4$-cycles of the
parallel pairs $(f_z, g_z \cdot x)$ of
Lemma~\ref{one-odd:parallel} and Lemma~\ref{one-odd:shifted}(ii).
They form a star centered at $\Theta \cdot x$: every gadget has its
edge $f_z$ on the unit $K_z$ and its other unit edge $g_z\cdot x$ on
$T_x$.  The edges $g_z \cdot x$ are pairwise distinct because their
last two colors are $z\,x$ for distinct $z$, and each $K_z$ carries
exactly one unit edge.  Lemma~\ref{prelim:tree-gluing} gives one
Hamiltonian cycle.
\end{proof}

\subsection{Choosing the shifted stutters}

We choose $v'_z$ for all $z \in Z$ as follows; the rule is stated for
$S$ up to renaming colors, and the number of changes (the dimension
rule) can be read off each word.

\begin{itemize}[itemsep=2pt]
  \item \emph{$K(S^x) \ge 3$ and $S^x \ne \sig{2,2,2}$:} choose
  any shifted stutter whose cyclic sequence has at least $4$ changes,
  so that $\dim C_z \ge 4$.  One exists:
  \begin{enumerate}[label=\textup{(\roman*)},itemsep=1pt]
    \item if $K(S^x) \ge 4$, every shifted stutter qualifies, for
    instance the one with the doubles grouped by color and a group
    other than $z$'s last;
    \item if $K(S^x) = 3$ and $k_z \ge 4$, let $c, d$ be the other
    two colors (in either order) and take
    $w = c^{k_c} z^{k_z-2} d^{k_d}$.  If $k_z = 2$, let $c$ be a
    color with $k_c \ge 4$ (it exists since
    $S^x \ne \sig{2,2,2}$), let $d$ be the third color, and take
    $w = cc \cdot d^{k_d} \cdot c^{k_c-2}$.  The cyclic sequences
    are $(z, c\ldots, z\ldots, d\ldots)$ and $(z, c, d\ldots,
    c\ldots)$, each with exactly $4$ changes.
  \end{enumerate}
  \item \emph{$K(S^x) = 2$:} then $k_x = 1$ and $S = \sig{2p,2q,1}$
  up to order.  Since $S$ is not $\sig{2j,2,1}$, we have $p, q \ge 2$.
  For $Z = \{z,y\}$ put $v'_z = z^{k_z-1} y^{k_y} z$.  Then $C_z$ is
  a $4$-cycle and $\omega = y \ne x$.
\end{itemize}

The remaining case, $S^x = \sig{2,2,2}$, in which every shifted
stutter has exactly $3$ changes --- that is, $S$ a permutation of $\sig{3,2,2}$ or of $\sig{2,2,2,1}$ --- is handled in
\S\ref{one-odd:conclusion} by the certificate of Appendix~\ref{appendix:certificates}.

\begin{remark}[The three-color hypothesis is necessary]%
\label{one-odd:anchored-sharp}
Uniqueness can fail on two colors: $01100110$ and $10011001$ are both
anchored, and they share the skeleton
$\{0,1\}|\{0,1\}|\{0,1\}|\{0,1\}$, hence lie in the same
hypercube.  In the choice rule the shifted stutters use every color
of $S^x$, so there the hypothesis of the lemma below holds.
\end{remark}

\begin{lemma}[Anchored words]\label{one-odd:anchored}
An anchored word $v$ of length $2m$ that uses at least three
distinct colors is the only anchored word in its hypercube.
Consequently, in the first two cases of the choice rule the hypercubes
$C_z$, $z \in Z$, are pairwise distinct.
\end{lemma}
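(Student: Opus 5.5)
The plan is to read off an anchored word from its skeleton alone and then show that the skeleton determines the word once three colors appear. Anchored means every shifted domino is a double, so $v_{2i+1} = v_{2i+2}$ for all $0 \le i \le m-2$. Writing $e_0 = v_0$, $e_{i+1} = v_{2i+1} = v_{2i+2}$ for $0\le i\le m-2$, and $e_m = v_{2m-1}$, we get $v = e_0\, e_1e_1 \cdots e_{m-1}e_{m-1}\, e_m$. Domino $i$ then carries the ordered pair $(e_i, e_{i+1})$, exactly as in Lemma~\ref{one-odd:shifted}(i). So an anchored word amounts to a sequence $e_0, \ldots, e_m$, and its skeleton is the sequence of unordered pairs $\{e_i, e_{i+1}\}$, which describes a walk through the colors with steps possibly staying put.

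Now let $v$ and $\tilde v$ be anchored words with the same skeleton, given by sequences $(e_i)$ and $(\tilde e_i)$. Then $\{e_i,e_{i+1}\} = \{\tilde e_i,\tilde e_{i+1}\}$ for every $i$. I would show the two sequences coincide by propagation. If $e_i = \tilde e_i$, the pair equality forces $e_{i+1} = \tilde e_{i+1}$. The same holds leftward. So it suffices to find one index where they agree.

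At a double domino, $e_i = e_{i+1}$, and then $\tilde e_i = \tilde e_{i+1} = e_i$, which gives an agreement. We may therefore assume every domino is mixed. Now suppose $e_0 \ne \tilde e_0$. Then $\tilde e_0 = e_1$ and $\tilde e_1 = e_0$. Inductively, each domino's pair is $\{e_i,e_{i+1}\}$ with $\tilde e_i = e_{i+1}$, so $\tilde e_{i+1}$ must equal $e_i$. Consequently $e_{i+1} = \tilde e_i = e_{i-1}$, so the sequence $e$ alternates between two colors $e_0$ and $e_1$. Then $v$ uses only two colors, contradicting the hypothesis. Hence $e = \tilde e$ and $v = \tilde v$. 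Remark~\ref{one-odd:anchored-sharp} shows exactly this alternating failure mode.

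For the consequence, the shifted stutters $v'_z$ of the choice rule are anchored by Lemma~\ref{one-odd:shifted}(i). In the first case they use every color of $S^x$, and $K(S^x)\ge 3$, so each is the unique anchored word of $C_z$. For distinct $z \ne z'$ the words $v'_z$ and $v'_{z'}$ differ, since their first colors are $z$ and $z'$. Thus $C_z \ne C_{z'}$.

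In the case $K(S^x)=2$, the lemma does not apply. There I would argue directly. $C_z$ has skeleton $\{z\}\{z\}\cdots\{z,y\}\cdots\{y,z\}$, so its mixed dominoes sit at positions determined by $z$'s run length, and the two skeletons for $z$ and $y$ differ. The first domino is $zz$ versus $yy$ whenever $k_z-1\ge 3$, which holds since $p,q\ge2$.

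The only real obstacle is the alternating case in the propagation step, and it is handled cleanly by the three-color hypothesis.
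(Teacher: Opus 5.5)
Your proof is correct and follows the paper's argument. You write each anchored word as $e_0\,e_1e_1\cdots e_{m-1}e_{m-1}\,e_m$, propagate agreement through the equal unordered pairs, show that disagreement at the start forces the sequence to alternate between two colors, and get the consequence from the distinct first colors $z$ of the $v'_z$; your preliminary use of a double domino as an agreement point is only a cosmetic variant of the paper's carrying of mixedness through the induction. Note that the paper's ``first two cases'' are the subcases (i) $K(S^x)\ge 4$ and (ii) $K(S^x)=3$ of the first bullet, so your paragraph on $K(S^x)=2$ is not needed (the routing there takes $\Theta_0$ unchanged), though what you say there is correct.
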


\begin{proof}
Write $v = a_0 a_1a_1 \cdots a_{m-1}a_{m-1} a_m$, so that
domino $i$ is $\{a_i, a_{i+1}\}$.  Let $v' = b_0 b_1b_1 \cdots b_m$ be
anchored with the same skeleton, so $\{b_i, b_{i+1}\} =
\{a_i, a_{i+1}\}$ for all $i$.  If $b_0 = a_0$, then $b_i = a_i$ for
all $i$, by induction.  Otherwise $b_0 = a_1 \ne a_0 = b_1$.  Suppose
$b_i = a_{i+1}$, $b_{i+1} = a_i$ and $a_i \ne a_{i+1}$.  Then
$\{a_i, b_{i+2}\} = \{a_{i+1}, a_{i+2}\}$ forces $a_{i+2} = a_i$ and
$b_{i+2} = a_{i+1}$, and so $a_{i+1} \ne a_{i+2}$: the hypothesis
holds at $i+1$.  By induction $a_{i+2} = a_i$ for all $i$, so $v$
uses only the colors $a_0, a_1$, a contradiction.  In the first two
cases of the choice rule, $v'_z$ has content $S^x$ and uses at
least three colors; different $z$ give different first colors, hence
different anchored words, hence different hypercubes.
\end{proof}

\begin{proposition}[Routing]\label{one-odd:routing}
Let $S$ be as in Theorem~\ref{thm:one-odd}, not a permutation of $\sig{2j,2,1}$, with $S^x \ne \sig{2,2,2}$, and choose the
$v'_z$ by the rule above.  Then, for every choice of Hamiltonian
cycles $K_z$ of the blocks $T_z$, the graph $N(S^x)$ has a
Hamiltonian cycle $\Theta$ as required in
Proposition~\ref{one-odd:assembly}(b).
\end{proposition}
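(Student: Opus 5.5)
The plan is to obtain $\Theta$ by running the proof of Theorem~\ref{thm:all-even} on $S^x$ without change, and to use the one freedom that proof leaves open: the edge $f$ in Lemma~\ref{cube:boustrophedon} through which the boustrophedon cycle of a hypercube must pass. First, for each $z \in Z$, Lemma~\ref{one-odd:shifted}(iii) gives a non-exit edge $f_z$ of $K_z$ at $v_z$, namely $f_z = \{v_z, s_{2i}(v_z)\}$ for some direction $i < m-1$ of $C_z$. Its partner $g_z = g_i$ is then the edge of $C_z$ at $v'_z$ in direction $i$. This is a non-rung edge of $C_z$, since the rung direction is $m-1$ by Lemma~\ref{one-odd:shifted}(i).

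It then suffices to show the following, for each $z$:
\begin{enumerate}[label=\textup{(\arabic*)},itemsep=0pt]
\item the boustrophedon cycle of $C_z$ can be chosen through $g_z$;
\item no stitch of $\mathcal{G}$ uses $g_z$ as a unit edge;
\item the hypercubes $C_z$ are pairwise distinct, so that these prescriptions do not conflict.
\end{enumerate}
If these hold, the gluing of Lemma~\ref{prelim:tree-gluing} deletes only unit edges of stitches. So $g_z$ survives into $\Theta$, which is the hypothesis of Proposition~\ref{one-odd:assembly}(b).

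\emph{Case $K(S^x) \ge 3$.} Here the choice rule gives $\dim C_z \ge 4$. By Property~(P) (Proposition~\ref{all-even:propertyP}), $C_z$ is met only by rung stitches, and only in rungs. Hence no non-rung stitch edge is prescribed in $C_z$. In the proof of Theorem~\ref{thm:all-even} we choose the boustrophedon cycle of $C_z$ in the rung direction through $f = g_z$, which Lemma~\ref{cube:boustrophedon} allows because $g_z$ is not a rung. All stitch unit edges in $C_z$ are rungs, so $g_z$ is not among them and survives. Pairwise distinctness of the $C_z$ is Lemma~\ref{one-odd:anchored}, since $v'_z$ uses all colors of $S^x$, which are at least three.

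\emph{Case $K(S^x) = 2$.} Here $S = \sig{2p,2q,1}$ with $p,q \ge 2$, and $v'_z = z^{k_z-1}y^{k_y}z$. Then $C_z$ is a square with exactly two mixed dominoes: the boundary domino $\{z,y\}$ and the last domino $\{y,z\}$. There is only one non-exit edge, so $f_z$, and hence $g_z$, is forced. Distinctness of $C_z$ and $C_y$ cannot come from Lemma~\ref{one-odd:anchored} (see Remark~\ref{one-odd:anchored-sharp}). Instead the skeletons differ directly: the first domino is $zz$ in one and $yy$ in the other, because $k_z, k_y \ge 4$.

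The cycle of a square is the whole square, so (1) is automatic. The issue is (2): in arity $2$ there are no root stitches, but there are descent stitches, and one of them could meet $C_z$ in its non-rung direction. I expect this to be the main obstacle. I would first compare the skeleton of $C_z$, namely $zz\cdots zz\,|\,zy\,|\,yy\cdots yy\,|\,yz$ with empty trailing word, against the four shapes listed in the proof of Lemma~\ref{all-even:lemmaI}:
\begin{itemize}[itemsep=0pt]
\item $v_F$ and $v_{ab}$ are excluded: $v_F$ needs $\Lambda' = \Lambda$ with two mixed dominoes adjacent, and $v_{ab}$ needs a third color $c$.
\item $v'_F$ with $u'$ empty has the shape $w \cdot ab \cdot e'y \cdot ey$.
\end{itemize}
This can match $C_z$ only when $\Lambda' = yy$ is a single double preceded by the mixed domino $\{z,y\}$, that is, only for $k_y = 4$.

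In that subcase I would check which non-rung edge the descent stitch $q(v_F; l, l-1)$ actually uses in $C_z$. Its unit edge in $C_z$ is the flip of domino $l-1$ at $v'_F$. Since $v'_F$ is obtained from $v_F$ by the odd swap, it is not the anchored word $v'_z$. So that unit edge is the direction-$i$ edge at the other end of $C_z$, which is distinct from $g_z$. If this identification fails, the fallback is to exchange the roles of $z$ and $y$, or to pick a different stutter $w$ in the descent stitch of that family; any admissible choice works in Theorem~\ref{thm:all-even}. Either adjustment moves the descent stitch off $g_z$.
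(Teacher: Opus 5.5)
Your proposal is correct in substance. In the case $K(S^x)\ge 3$ it is the paper's argument: property~(P) confines the stitches in $C_z$ to rungs, the boustrophedon cycle in the rung direction can be routed through the non-rung partner $g_z$, and Lemma~\ref{one-odd:anchored} ensures there is one prescription per hypercube.

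\textbf{Where you differ: item (2) when $K(S^x)=2$.} The paper settles this uniformly, in both cases, with Lemma~\ref{cube:lemmaN}. Every word of a stitch carries an odd edge, and $v'_z$ is anchored, so no stitch contains $v'_z$. Hence no unit edge of any stitch (rung, descent or root) is incident to $v'_z$. So $g_z$, which lies on $H_{C_z}=C_z$, survives into $\Theta_0$, and no analysis of which stitches meet $C_z$ is needed.

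\textbf{Your skeleton comparison.} Your comparison against the shapes of Lemma~\ref{all-even:lemmaI} is accurate and gives more information. A descent stitch really does enter $C_z$ when $k_y=4$, from the family $F(yy,\{z,y\})$. For example, for $S^x=\sig{4,4}$ it meets $C_0$ in the edge $\{00011101,00101101\}$, parallel to $g_0=\{00011110,00101110\}$. But the route is longer, and its last step is incomplete as written. Knowing only $v'_F\ne v'_z$ does not place the unit edge $\{v'_F,s_{2(l-1)}(v'_F)\}$ at the other end of the square: if $v'_F=s_{2i}(v'_z)$, that edge would be $g_z$. You need the same fact for the second endpoint, which also carries the odd cross edge of the stitch. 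That fact is exactly Lemma~\ref{cube:lemmaN}, and once you state it, the whole case analysis becomes unnecessary.

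\textbf{Two smaller points.}
\begin{itemize}
\item The fallback you mention is not actually available. For $k_y=4$ the stutter $w=z^{k_z-2}$ is unique, and the statement fixes the $v'_z$ by the choice rule. It is never needed, however.
\item The distinctness of $C_z$ and $C_y$ in this case is true but unused, since no hypercube cycle is replaced there.
\end{itemize}
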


\begin{proof}
Let $\Theta_0$ be a Hamiltonian cycle of $N(S^x)$ constructed as
in Section~\ref{sec:all-even}, with any admissible choices, let $H_C$
be its hypercube cycles and let $\mathcal{G}$ be its stitches.  An edge of
$C_z$ at $v'_z$ lies on no stitch of $\mathcal{G}$
(Lemma~\ref{cube:lemmaN}), so it lies on $\Theta_0$ if and only if it
lies on $H_{C_z}$.

\emph{Cases (i) and (ii).}  Here $\dim C_z \ge 4$.  By
Lemma~\ref{one-odd:shifted}(iii), $K_z$ has a non-exit edge $f_z$ at
$v_z$; its partner $g_z$ is an edge of $C_z$ in a non-last direction,
so it is not a rung.  By property \textup{(P)}
(Proposition~\ref{all-even:propertyP}), the stitches of $\mathcal{G}$
meet $C_z$ only in rungs.  Replace $H_{C_z}$ by a boustrophedon cycle
in direction $l(C_z)$ through $g_z$
(Lemma~\ref{cube:boustrophedon}).  This is one replacement per hypercube,
since the $C_z$ are pairwise distinct
(Lemma~\ref{one-odd:anchored}).  The new $H_{C_z}$ contains all rungs,
hence all unit edges of $\mathcal{G}$ in $C_z$, so
Lemma~\ref{prelim:tree-gluing} still applies, and the resulting
Hamiltonian cycle $\Theta$ contains every $g_z$.

\emph{Case (iii).}  Take $\Theta = \Theta_0$.  The hypercube $C_z$ is a
$4$-cycle, so $H_{C_z} = C_z$ contains both edges of $C_z$ at
$v'_z$.  ($C_z$ may also carry the non-rung edge of a descent or root
stitch, at most one by Lemma~\ref{all-even:lemmaI}; by
Lemma~\ref{cube:lemmaN} it is not at $v'_z$.)  Here
$\deg_{T_z}(v_z) = 1 + 1$, so $K_z$ uses the unique non-exit edge
$f_z$ at $v_z$, and its partner $g_z$ lies on $\Theta_0$.
\end{proof}

\subsection{Conclusion of the proof}\label{one-odd:conclusion}

If $S$ is a permutation of $\sig{2j,2,1}$, apply
Proposition~\ref{family:prop} for $j \ge 2$, or
Appendix~\ref{appendix:certificates} for $j = 1$.  Otherwise let
$K_z$ be arbitrary Hamiltonian cycles of the blocks $T_z$, which
exist by Proposition~\ref{one-odd:assembly}(a).

\begin{itemize}[itemsep=2pt]
  \item If $S^x = \sig{2,2,2}$ (so $S$ is a permutation of $\sig{3,2,2}$ or of $\sig{2,2,2,1}$), rename the colors of $S^x$
  as $0, 1, 2$ and take
  $v'_z = z\,(z+1)(z+1)\,(z+2)(z+2)\,z$ for $z \in Z$, with indices
  modulo $3$.
  \begin{itemize}[itemsep=1pt]
    \item The hypercube $C_z$ has dimension $3$, and its non-last
    directions are $0$ and $1$.  So the non-exit edges at $v_z$ are
    $s_0$ and $s_2$, whether or not $\omega = z+2$ equals $x$.
    \item Their partners are the position-$0$ and position-$2$ edges
    at $v'_z$.
    \item The Hamiltonian cycle $\Theta$ of $N(2,2,2)$ in
    Appendix~\ref{appendix:certificates} contains both of these edges
    at each $v'_z$.  So whichever non-exit edge $K_z$ uses
    (Lemma~\ref{one-odd:shifted}(iii)), its partner lies on $\Theta$.
  \end{itemize}
  \item Otherwise take $\Theta$ from
  Proposition~\ref{one-odd:routing}.
\end{itemize}

In both cases Proposition~\ref{one-odd:assembly}(b) gives a
Hamiltonian cycle of $N(S)$, proving Theorem~\ref{thm:one-odd}.

\section{\texorpdfstring{The family $\sig{2j,2,1}$}{The family (2j,2,1)}}
\label{sec:family}

The one-odd theorem excludes the family $\sig{2j,2,1}$ from its
assembly (Proposition~\ref{one-odd:assembly}), because one of its
blocks has no Hamiltonian cycle.  The block ending in the color of odd
multiplicity is all even, and the block ending in $0$ is a two-odd
graph with a Hamiltonian cycle, but the block
$T_1 = G(2j,1,1)\cdot 1$ is path-only
(Proposition~\ref{prelim:path-only-2k11}).  The block is
absorbed by a grid construction instead, with the gluing running
through edges that every Hamiltonian cycle of $T_0$ is forced to use.

Let $S = \sig{2j,2,1}$ on the colors $0,1,2$ with multiplicities
$2j,2,1$ (so $x = 2$), and $n = 2j+3$.  Verhoeff
\cite[\S5.3]{verhoeff2017} sketches an inductive construction of this
family (a non-inductive one is attributed there to van Duijnhoven
\cite{vanduijnhoven2013}), glued through Hamiltonian paths with
prescribed ends, leaving the final gluing step implicit.  The
construction below is complete and self-contained, and glues through
forced edges; for $j = 1$,
Appendix~\ref{appendix:certificates} lists a Hamiltonian cycle of
$N(2,2,1)$.  Let $j \ge 2$.  By Lemma~\ref{one-odd:blocks}, $N(S)$
splits into three blocks:
\begin{itemize}[itemsep=0pt]
  \item $T_2 = N(2j,2) \cdot 2$, which is Hamiltonian by
  Theorem~\ref{thm:all-even};
  \item $T_0 = G(2j-1,2,1) \cdot 0$, which is Hamiltonian by the
  inherited theorem (Theorem~\ref{inherited:stachowiak}; its odd
  multiplicities are $2j-1 \ge 3$ and $1$);
  \item $T_1 = G(2j,1,1) \cdot 1$, to which the inherited theorem
  does not apply.
\end{itemize}

\subsection{\texorpdfstring{The grid of $G(2j,1,1)$}{The grid of G(2j,1,1)}}

A word of signature $(2j,1,1)$ has length $N = 2j+2$.  Record it as
the cell $(p,q)$, where $p$ is the position of the color $2$ and $q$
the position of the color $1$.  Its neighbors are the cells
$(p \pm 1, q)$ and $(p, q \pm 1)$ of $\{0,\ldots,N-1\}^2$ off the
diagonal $p = q$ (the color $2$ or $1$ passes a $0$), and, when
$\lvert p-q \rvert = 1$, the cell $(q,p)$ (the colors $2$ and $1$
exchange).  So the upper triangle $\Delta^+ = \{p < q\}$ and the
lower triangle $\Delta^- = \{p > q\}$ induce grid graphs, and the
transposition $(p,q) \mapsto (q,p)$ is an automorphism that swaps
them.  The only edges between $\Delta^+$ and $\Delta^-$ are the
$2j+1$ edges $d_i d'_i$, where $d_i = (i,i+1)$ and
$d'_i = (i+1,i)$, $0 \le i \le 2j$.  Row $p$ of $\Delta^+$ is
$(p,p+1), \ldots, (p,N-1)$; it starts at $d_p$.

\begin{lemma}\label{family:gamma}
For $j \ge 2$, let $\psi_0 = d_{2j-1} = 0^{2j-1}210$ and
$\psi'_0 = d'_{2j-1} = 0^{2j-1}120$.  Then
$G(2j,1,1) - \{\psi_0, \psi'_0\}$ has a Hamiltonian cycle $\Gamma$
that contains the row-$0$ edge $(0,2j-1)(0,2j)$, that is,
$\{20^{2j-2}100,\ 20^{2j-1}10\}$.
\end{lemma}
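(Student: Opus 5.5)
The plan is to build $\Gamma$ from Hamiltonian path covers of the two triangles, joined through the diagonal edges $d_i d'_i$. First, a parity count shows that every diagonal edge except $\psi_0\psi'_0$ must be used. Then I pick two different path covers of $\Delta^+ - \psi_0$, one for $\Delta^+$ and, transposed, one for $\Delta^-$, whose endpoint pairings interleave so that the union is a single cycle.

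\emph{Step 1: the edge count.} The cells of $\Delta^+$ correspond to the words of $\sig{2j,2}$, as in the proof of Proposition~\ref{prelim:path-only-2k11}. So by Lemma~\ref{prelim:stutter-parity} the class containing all $d_i$ has $j+1$ more cells in $\Delta^+$ than the other class. Removing $\psi_0$ leaves a surplus of $j$. Hence a Hamiltonian cycle of $G(2j,1,1)-\{\psi_0,\psi'_0\}$ meets $\Delta^+$ in exactly $j$ paths with both ends on diagonal cells. It therefore uses all $2j$ remaining edges $d_id'_i$, $i\in I=\{0,\ldots,2j-2,2j\}$. Conversely, any path covers of $\Delta^+-\psi_0$ and $\Delta^- - \psi'_0$ with endpoints exactly $\{d_i\}_{i\in I}$, respectively $\{d'_i\}_{i\in I}$, give a $2$-factor. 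The only issue is to make it connected, and mirroring one cover onto the other would produce $j$ separate cycles.

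\emph{Step 2: the cover of $\Delta^+$.} I pair $d_{2i}$ with $d_{2i+1}$ for $0\le i\le j-2$ by a serpentine. The path runs along row $2i$ from $d_{2i}$ to $(2i,N-1)$, steps to $(2i+1,N-1)$, and returns along row $2i+1$ to $d_{2i+1}$. The remaining cells are those of rows $2j-2$, $2j-1$ and $2j$ other than $\psi_0$. I cover them by the path
\[
d_{2j-2}\to(2j-2,2j)\to(2j-2,2j+1)\to(2j-1,2j+1)\to(2j,2j+1)=d_{2j}.
\]
Row $0$ is traversed in full, so the required edge $(0,2j-1)(0,2j)$ lies on this cover.

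\emph{Step 3: the cover of $\Delta^-$.} I describe it in $\Delta^+$ and transpose. One path runs from $d_0$ along row $0$ to $(0,2j+1)$, then down column $N-1=2j+1$ to $(2j,2j+1)=d_{2j}$. The cells left over are the staircase of rows $1,\ldots,2j-2$ restricted to columns $\le 2j$; row $2j-1$ there contained only $\psi_0$. Rows $2i-1$ and $2i$ both end in column $2j$, so serpentines pair $d_{2i-1}$ with $d_{2i}$ for $1\le i\le j-1$. After transposition this gives the pairs $(d'_{2i-1},d'_{2i})$ and $(d'_{2j},d'_0)$.

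\emph{Step 4: connectivity.} Following the diagonal edges, the union visits $0\to1$ (upper), $1\to2$ (lower), and so on up to $2j-3\to2j-2$ (lower). It continues $2j-2\to2j$ (upper) and then $2j\to0$ (lower), which is one cycle through all of $I$. Hence $\Gamma$ is Hamiltonian.

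The main obstacle is Step 3. The natural mirror choice fails, so I need a second cover with a shifted pairing. The trick that makes it work is to route row $0$ around the last column, so that $d_0$ pairs with $d_{2j}$. Everything else is routine adjacency checking, including that $j\ge2$ makes both serpentine families well defined.
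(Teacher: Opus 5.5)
Your proposal is correct and is essentially the paper's proof: your Step~2 cover is the paper's $Y_1$, your Step~3 cover is $Y_2$ (its serpentines turn at column $N-2 = 2j$), and the contracted pairings close into the same single cycle $0,1,\ldots,2j-2,2j,0$. The parity count of Step~1 is not needed for existence, but it correctly explains why all $2j$ edges $d_i d'_i$, $i \in I$, must be used and why a mirrored cover cannot work.
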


\begin{proof}
Let $\Delta^+_* = \Delta^+ - \{\psi_0\}$.  Its rows $0, \ldots,
2j-2$ are complete; row $2j-1$ is the single cell $(2j-1, N-1)$ and
row $2j$ is the single cell $(2j, N-1) = d_{2j}$.  We cover
$\Delta^+_*$ by vertex-disjoint paths in two ways:
\begin{itemize}[itemsep=2pt]
  \item $Y_1$: for $0 \le i \le j-2$, the U-turn that starts at
  $d_{2i}$, runs along row $2i$ to column $N-1$, steps down, and runs
  back along row $2i+1$ to $d_{2i+1}$; and the path that starts at
  $d_{2j-2}$, runs along row $2j-2$ to column $N-1$, and then goes
  down through $(2j-1, N-1)$ to $d_{2j}$.
  \item $Y_2$: the path that starts at $d_0$, runs along row $0$ to
  column $N-1$, and goes down column $N-1$ to $d_{2j}$; and for
  $1 \le i \le j-1$, the U-turn that starts at $d_{2i-1}$, runs along
  row $2i-1$ to column $N-2$, steps down, and runs back along row
  $2i$ to $d_{2i}$.
\end{itemize}
The cover $Y_1$ covers rows $0, \ldots, 2j-3$ by its U-turns and rows
$2j-2, 2j-1, 2j$ by its last path; the cover $Y_2$ covers row $0$ and
column $N-1$ by its first path and the rest of rows $1, \ldots, 2j-2$
by its U-turns.  In both covers the path ends are exactly the cells
$d_i$, $i \in I = \{0, 1, \ldots, 2j-2, 2j\}$, each once.

Put the paths of $Y_1$ in $\Delta^+$, the transposes of the paths of
$Y_2$ in $\Delta^-$, and add the edges $d_i d'_i$ for
$i \in I$.  Every vertex of $G(2j,1,1) - \{\psi_0, \psi'_0\}$ now has
degree $2$.  Contract each path to an edge between the indices of
its two ends:
\begin{itemize}[itemsep=0pt]
  \item $Y_1$ gives the pairs $\{0,1\}, \{2,3\}, \ldots,
  \{2j-4, 2j-3\}, \{2j-2, 2j\}$;
  \item $Y_2$ gives the pairs $\{0, 2j\}, \{1,2\}, \{3,4\}, \ldots,
  \{2j-3, 2j-2\}$.
\end{itemize}
Their union is the single cycle $0, 1, 2, \ldots, 2j-2, 2j, 0$, so
$\Gamma$ is one cycle.  Row $0$ lies on the first U-turn of $Y_1$, so
$\Gamma$ contains $(0,2j-1)(0,2j)$.
\end{proof}

\begin{figure}[ht]
\centering
\begin{tikzpicture}[scale=0.56,
  dot/.style={circle,fill=black,inner sep=1.5pt}]
  \foreach \p in {0,...,7}{
    \foreach \q in {0,...,7}{
      \ifnum\q>\p \node[dot] at (\q,-\p) {}; \fi
    }
  }
  \draw[thick,dashed] (1,0)--(7,0)--(7,-6);
  \draw[thick,dashed] (2,-1)--(6,-1)--(6,-2)--(3,-2);
  \draw[thick,dashed] (4,-3)--(6,-3)--(6,-4)--(5,-4);
  \draw[very thick] (1,0)--(7,0)--(7,-1)--(2,-1);
  \draw[very thick] (3,-2)--(7,-2)--(7,-3)--(4,-3);
  \draw[very thick] (5,-4)--(7,-4)--(7,-6);
  \node[circle,fill=red!80!black,inner sep=2.2pt] at (6,-5) {};
  \node[below left,font=\scriptsize] at (1,0) {$d_0$};
  \node[below left,font=\scriptsize] at (2,-1) {$d_1$};
  \node[below left,font=\scriptsize] at (3,-2) {$d_2$};
  \node[below left,font=\scriptsize] at (4,-3) {$d_3$};
  \node[below left,font=\scriptsize] at (5,-4) {$d_4$};
  \node[right,font=\scriptsize] at (7,-6) {$d_6$};
  \node[right,font=\scriptsize] at (6,-5) {$\psi_0$};
  \node[left,font=\scriptsize] at (0.7,0) {$p=0$};
  \node[left,font=\scriptsize] at (0.7,-4) {$p=4$};
  \node[below,font=\scriptsize] at (1,-6.5) {$q=1$};
  \node[below,font=\scriptsize] at (7,-6.5) {$q=7$};
\end{tikzpicture}
\caption{The two covers of $\Delta^+_*$ for $j = 3$ ($N = 8$): cells
are dots, solid is $Y_1$, dashed is $Y_2$, and the red cell is the
deleted $\psi_0$.  The ends $d_i$, $i \in I$, are labeled.  In
$\Gamma$ the paths of $Y_1$ are placed in $\Delta^+$ and the transposes
of the paths of $Y_2$ in $\Delta^-$.}
\label{family:figure}
\end{figure}

\begin{proposition}\label{family:prop}
For every $j \ge 2$, $N(2j,2,1)$ has a Hamiltonian cycle.
\end{proposition}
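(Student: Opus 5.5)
The plan is to apply Lemma~\ref{prelim:tree-gluing} with four units and three gadgets. The units are:
\begin{itemize}[itemsep=0pt]
  \item the block $T_2$ with the cycle $\Theta\cdot 2$, where $\Theta$ is a Hamiltonian cycle of $N(2j,2)$ from Theorem~\ref{thm:all-even};
  \item the block $T_0$ with an \emph{arbitrary} Hamiltonian cycle $K_0$, which exists by Theorem~\ref{inherited:stachowiak};
  \item the cycle $\Gamma\cdot 1$ of Lemma~\ref{family:gamma};
  \item the digon $\{\psi_0\cdot 1,\ \psi'_0\cdot 1\}$, whose edge is the swap at position $2j-1=n-4$.
\end{itemize}
Since $K_0$ is arbitrary, every gadget meeting $T_0$ must use an edge of $T_0$ that every Hamiltonian cycle of $T_0$ is forced to use. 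As in Lemma~\ref{one-odd:shifted}(iii), I would obtain these as edges at a vertex of degree $2$ in $G(2j-1,2,1)$. All gadgets are the $\rho$-parallel pairs of Lemma~\ref{one-odd:parallel}. The tree is the star $\{\text{digon},\ \Gamma\cdot 1,\ T_2\}$ centred at $T_0$; every other block sits in a different unit, so the tree and distinctness conditions are easy to check.

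\emph{Digon to $T_0$.} The digon edge is $\{0^{2j-1}2101,\ 0^{2j-1}1201\}$. Its $\rho$-image is $\{0^{2j-1}2110,\ 0^{2j-1}1210\}$. The word $0^{2j-1}211$ has only two neighbours in $G(2j-1,2,1)$: swap $02$ and swap $21$. So this image edge lies on every Hamiltonian cycle of $T_0$.

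\emph{$\Gamma\cdot1$ to $T_0$.} I would use the row-$0$ edge $\{20^{2j-2}100,\ 20^{2j-1}10\}\cdot 1$, which $\Gamma$ contains by Lemma~\ref{family:gamma}. It is a swap at position $2j-1\le n-4$. Its $\rho$-image is $\{20^{2j-2}1010,\ 20^{2j-1}110\}$. The shortened word $20^{2j-1}11$ again has degree $2$ in $G(2j-1,2,1)$, so the image edge is forced. The two forced $T_0$-edges are distinct, since their last three colors differ. In each gadget all four words are non-stutters, because $T_0$ and $T_1$ contain no stutters.

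\emph{$T_2$ to $T_0$.} This is the step I expect to be the main obstacle. I need a forced non-exit edge of $T_0$ whose $\rho$-image lies on $\Theta\cdot 2$, for a suitably chosen $\Theta$. My first candidate is the degree-$2$ vertex whose shortened form is $0^{2j-1}112$. Its non-exit edge is the swap $01$ at position $2j-2$, and its $\rho$-image in $N(2j,2)$ is the odd edge $\{0^{2j-1}110,\ 0^{2j-2}1010\}$. This edge joins two squares of the root family $L_{01}$. I would then try to arrange the all-even construction so that this edge is a cross edge of one of its stitches. Concretely, I would choose the matching $M(2j-1,1)$ or the rung stitches so that this odd edge appears among the cross edges; cross edges lie on the final cycle $\Theta$ (proof of Lemma~\ref{prelim:tree-gluing}).

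If this fails, I have two fallbacks. The first is to glue $T_2$ to $\Gamma\cdot 1$ instead, choosing an edge of $\Gamma$ (for example another row edge of $Y_1$, which is explicit) whose $\rho$-image is a rung or a boustrophedon-prescribable edge of a square of $N(2j,2)$. I would re-select that square's $H_U$ via Lemma~\ref{cube:boustrophedon}; the squares carry their whole $4$-cycle anyway, so any intra-domino edge of a non-stutter square lies on $\Theta$. The second fallback is to look for another degree-$2$ vertex of $G(2j-1,2,1)$ ending in $2$ whose non-exit edge is an even swap.

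Once the third gadget is fixed, Lemma~\ref{prelim:tree-gluing} yields the Hamiltonian cycle of $N(2j,2,1)$, since each unit carries its required unit edges and the digon carries exactly one.
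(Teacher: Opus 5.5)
Your architecture is the paper's: the same four units, a star centred at $K_0\cdot 0$, and the same forced degree-$2$ edges at $0^{2j-1}211$ and $20^{2j-1}11$ for the digon and $\Gamma\cdot 1$ gadgets. The gap is the third gadget, which you leave open, and the plan you give for it cannot work. The edge $\{0^{2j-1}110,\ 0^{2j-2}1010\}$ is not an odd edge. The two words differ by $s_{2j-2}$, an \emph{even} swap flipping domino $j-1$, and both have skeleton $(00)^{j-1}|01|01$. So the edge lies inside a single square of $L_{01}$, in its non-rung direction $j-1$. It can therefore never be a cross edge of a stitch, since those are odd swaps $s_{2i+1}$. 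Making it a unit edge of a stitch would do the opposite and delete it from $\Theta$. Your fallback remark that every intra-domino edge of a square survives on $\Theta$ is also false in general, because the gluing removes the unit edges of the stitches.

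The missing observation is the degree-$2$ forcing you already use in $T_0$, applied on the $T_2$ side. The word $\psi_4 = 0^{2j-1}110$ has exactly two neighbours in $G(2j,2)$, namely $0^{2j-2}1010$ (by $s_{2j-2}$) and $0^{2j-1}101$ (by $s_{2j}$); positions $2j-1, 2j$ carry $11$. Both neighbours are non-stutters, because their domino $j-1$ is mixed. Hence $\psi_4$ has degree $2$ in $N(2j,2)$, and \emph{every} Hamiltonian cycle $\Theta$ of $N(2j,2)$ contains your candidate edge. There is no need to tune the all-even construction. With that, $\Theta$ can be arbitrary, and the third gadget is the parallel pair at $i = 2j-2$ formed by $\{0^{2j-1}1120,\ 0^{2j-2}10120\}$ and its $\rho$-image $\{0^{2j-1}1102,\ 0^{2j-2}10102\}$. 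The gluing lemma then closes the proof exactly as in the paper.

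A minor slip: the forced edges $E_1\cdot 0 = \{0^{2j-1}2110, 0^{2j-1}1210\}$ and $E_3\cdot 0 = \{20^{2j-1}110, 20^{2j-2}1010\}$ both contain a word ending in $110$. So distinguish them by another feature, for example their first color, rather than by their last three colors.
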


\begin{proof}
Let $K_0$ be any Hamiltonian cycle of $G(2j-1,2,1)$
(inherited Theorem~\ref{inherited:stachowiak}), let $\Theta$ be any Hamiltonian
cycle of $N(2j,2)$ (Theorem~\ref{thm:all-even}), and let $\Gamma$ be
the cycle of Lemma~\ref{family:gamma}.  Consider the words
\begin{align*}
  \psi_1 &= 0^{2j-1}211, & \psi_2 &= 0^{2j-1}112, &
  \psi_3 &= 20^{2j-1}11 & &\text{of } G(2j-1,2,1),\\
  \psi_4 &= 0^{2j-1}110 &&&&&\text{of } N(2j,2).
\end{align*}
Each has exactly two neighbors in its graph: $\psi_1$ has
$s_{2j-2}(\psi_1)$ and $s_{2j-1}(\psi_1)$; $\psi_2$ has
$s_{2j-2}(\psi_2)$ and $s_{2j}(\psi_2)$; $\psi_3$ has
$s_0(\psi_3)$ and
$s_{2j-1}(\psi_3)$; and $\psi_4$ has $s_{2j-2}(\psi_4) =
0^{2j-2}1010$ and $s_{2j}(\psi_4) = 0^{2j-1}101$, both
non-stutters
because their domino $j-1$ is mixed.

A Hamiltonian cycle uses both edges at a vertex of degree $2$.  So
$K_0$ contains the three distinct edges
\[
  E_1 = \{\psi_1, \psi'_1\},\quad \psi'_1 = 0^{2j-1}121;\qquad
  E_2 = \{\psi_2, \psi'_2\},\quad \psi'_2 = 0^{2j-2}1012;
\]
\[
  E_3 = \{\psi_3, \psi'_3\},\quad \psi'_3 = 20^{2j-2}101,
\]
and $\Theta$ contains $E_4 = \{\psi_4, \psi'_4\}$ with
$\psi'_4 = 0^{2j-2}1010$.  With $\rho = s_{n-2}$, and writing
$(p,q) \cdot 1$ for the word of the cell $(p,q)$ followed by $1$,
\[
  \rho(\psi_1 \cdot 0) = \psi_0 \cdot 1,\qquad
  \rho(\psi'_1 \cdot 0) = \psi'_0 \cdot 1;
\]
\[
  \rho(\psi_2 \cdot 0) = \psi_4 \cdot 2,\qquad
  \rho(\psi'_2 \cdot 0) = \psi'_4 \cdot 2;
\]
\[
  \rho(\psi_3 \cdot 0) = (0,2j) \cdot 1,\qquad
  \rho(\psi'_3 \cdot 0) = (0,2j-1) \cdot 1.
\]
The units are $K_0 \cdot 0$, $\Theta \cdot 2$, $\Gamma \cdot 1$ and
the digon $\{\psi_0 \cdot 1, \psi'_0 \cdot 1\}$ (its two words
are joined by $s_{2j-1}$).  They are vertex-disjoint and cover
$T_0 \cup T_2 \cup T_1 = N(S)$ (Lemma~\ref{one-odd:blocks}).  In each
row of the table below, $w$ and $i$ satisfy the hypotheses of
Lemma~\ref{one-odd:parallel}: $w_i \ne w_{i+1}$ and
$w_{n-2} \ne w_{n-1}$ (the colors are listed), and
$i \le n-4 = 2j-1$.  So $f = \{w, s_i(w)\}$ and $\rho(f)$ form a
parallel pair.  In row $k$ we have $w = \psi_k \cdot 0$ and
$s_i(w) = \psi'_k \cdot 0$, so $f = E_k \cdot 0$; $\rho(f)$ is read
off the list of $\rho$-images above.

\begin{center}
\footnotesize
\setlength{\tabcolsep}{2pt}
\begin{tabular}{|c|l|c|l|l|l|l|}
\hline
 & $w$ & $i$ & colors at $i, i+1$; at $n-2, n-1$ & $f$, on $K_0 \cdot 0$ & $\rho(f)$ & unit of $\rho(f)$\\
\hline
1 & $\psi_1 \cdot 0 = 0^{2j-1}2110$ & $2j-1$ & $2, 1$; $1, 0$ & $E_1 \cdot 0$ & $\{\psi_0, \psi'_0\} \cdot 1$ & the digon\\
2 & $\psi_2 \cdot 0 = 0^{2j-1}1120$ & $2j-2$ & $0, 1$; $2, 0$ & $E_2 \cdot 0$ & $E_4 \cdot 2$ & $\Theta \cdot 2$\\
3 & $\psi_3 \cdot 0 = 20^{2j-1}110$ & $2j-1$ & $0, 1$; $1, 0$ & $E_3 \cdot 0$ & $\{(0,2j), (0,2j-1)\} \cdot 1$ & $\Gamma \cdot 1$\\
\hline
\end{tabular}
\end{center}

In each row the four words are non-stutters: $T_0$ and $T_1$
have none (Lemma~\ref{one-odd:blocks}), and $\psi_4 \cdot 2$ and
$\psi'_4 \cdot 2$ are non-stutters.  The two unit edges lie in
different units, so the gadget condition of
Lemma~\ref{one-odd:parallel} holds.  The three $4$-cycles form a star
centered at $K_0 \cdot 0$, whose unit edges $E_1 \cdot 0$,
$E_2 \cdot 0$, $E_3 \cdot 0$ are distinct and lie on $K_0 \cdot 0$.
The other unit edges lie on the digon, which carries only this one,
on $\Theta \cdot 2$ (the edge $E_4$), and on $\Gamma \cdot 1$
(Lemma~\ref{family:gamma}).  Lemma~\ref{prelim:tree-gluing} gives
one Hamiltonian cycle.
\end{proof}

\section{The main theorem}
\label{sec:assembly}

The two preceding sections construct the cycle at zero and at one odd
multiplicity; the remaining signatures, with two or more odd
multiplicities, are covered by the inherited theorem.  The assembly is
therefore a three-way dispatch.

\begin{theorem}[Reformulated Lehmer conjecture]%
\label{assembly:theorem}
Let $S$ be a signature of arity at least $2$ that is neither a binary
signature with an odd multiplicity nor a permutation of $\sig{2k,1,1}$.  Then $N(S)$ admits a Hamiltonian cycle.
\end{theorem}

\begin{proof}
Dispatch on the number of odd multiplicities of $S$.
\begin{enumerate}[label=\textup{(\arabic*)},itemsep=2pt]
  \item None: all multiplicities are even, and
  Theorem~\ref{thm:all-even} supplies a Hamiltonian cycle of $N(S)$.
  This includes the all-even binary signatures, so no clause of the
  binary classification is needed for existence here.
  \item One: the arity is at least $3$ by the first hypothesis (a
  binary signature with one odd multiplicity is excluded), and
  Theorem~\ref{thm:one-odd} supplies the cycle.
  \item At least two: such a signature has no stutter, so
  $N(S) = G(S)$.  Its arity is at least $3$: arity $2$ would mean two
  odd multiplicities, the excluded binary family.  It is not a
  permutation of $\sig{2j,1,1}$ by the second hypothesis, so the
  inherited theorem (Theorem~\ref{inherited:stachowiak}) supplies a
  Hamiltonian cycle of $G(S) = N(S)$.
\end{enumerate}
\end{proof}

\begin{remark}[Exactness of the exceptions]\label{assembly:exact}
At each excluded family, $N(S)$ admits a Hamiltonian path but no
Hamiltonian cycle: for the binary signatures with an odd multiplicity
this is Theorem~\ref{inherited:binary-theorem}, and for the
permutations of $\sig{2k,1,1}$ it is
Proposition~\ref{prelim:path-only-2k11}.  So the exception set of
Theorem~\ref{assembly:theorem} is minimal: outside it a cycle exists,
and at each member a path exists and no cycle does.
\end{remark}

\begin{remark}[Lehmer's conjecture]\label{assembly:lehmer}
By Verhoeff's reduction (Theorem~\ref{prelim:restriction}), a
Hamiltonian cycle or path of $N(S)$ yields a Lehmer path of $G(S)$.
Combining Theorem~\ref{assembly:theorem} with the paths of
Remark~\ref{assembly:exact} therefore gives Lehmer's word
conjecture in full: every neighbor-swap graph $G(S)$ admits a walk
that visits every word, with spur tips exactly at the
stutters.
\end{remark}

\begin{remark}[Well-foundedness, and the absence of mutual
induction]\label{assembly:wellfounded}
The dispatch above is not recursive, and Theorems~\ref{thm:all-even}
and~\ref{thm:one-odd} are not mutually inductive.
Theorem~\ref{thm:all-even} is a direct construction: given the
matching of Lemma~\ref{tools:lemmaM} --- whose proof is an induction
on the number $m = n/2$ of dominoes, and nothing else --- the stitches
are glued along an explicit spanning tree, with no recursion on the
length.  Theorem~\ref{thm:one-odd} reduces a one-odd signature to the
strictly shorter all-even signature $S^x$
(Theorem~\ref{thm:all-even}) and to the inherited two-odd theorem for
its blocks; the family $\sig{2j,2,1}$ likewise reduces to
Theorem~\ref{thm:all-even} on $\sig{2j,2}$ and to the inherited
theorem on $\sig{2j-1,2,1}$, both strictly shorter.  The dependency
is therefore acyclic --- the one-odd proof uses the all-even proof
and never the converse --- every chain of appeals is finite, and the
inherited theorem introduces no recursion from this article's side.
\end{remark}

\begin{remark}[Machine verification]\label{assembly:machine}
The assembled claim has its own standing gate, independent of the
producers' gates: it re-derives $N(S)$ from the definition, checks
that the dispatch of the proof is exhaustive and disjoint and that its
exception predicate agrees with the classification of
Section~\ref{prelim:trichotomy-section}, and then runs the route end
to end at every signature in the gate's window --- the all-even
construction, the one-odd construction (whose two-odd blocks are
built by the Stachowiak implementation), and, at two or more odd
multiplicities, Stachowiak's construction itself --- verifying every
returned cycle vertex by vertex and edge by edge against the gate's
own graph.  Appendix~\ref{appendix:machine-verification} states the
exact claims and parameters.
\end{remark}

\section{Conclusion}
\label{sec:conclusion}

This article proves the reformulated Lehmer conjecture
(Theorem~\ref{assembly:theorem}): outside the two exceptional families
--- binary signatures with an odd multiplicity, and the permutations
of $\sig{2k,1,1}$ --- the non-stutter graph $N(S)$ of a signature of
arity at least $2$ admits a Hamiltonian cycle.  Equivalently, it
solves Exercise~53 of \S7.2.1.3 of Knuth's \emph{Art of Computer
Programming} \cite{knuth2011}, the form in which Lehmer's question is
posed there.  At the exceptional
families a Hamiltonian path continues to exist and no cycle does, so
the exception set is exact, and Verhoeff's reduction turns the cycles
and paths into the Lehmer paths of the original conjecture
(Remark~\ref{assembly:lehmer}).

The proof is built on the partition of the words into
hypercubes.  The even (intra-domino) swaps generate the parabolic
subgroup whose orbits are the hypercubes of
Lemma~\ref{cube:lemma1}; in the all-even case these orbits are
hypercubes of dimension at least $2$, and the proof glues their
boustrophedon cycles along a spanning tree of stitches --- the rung
stitches inside each family built from the matching of
Lemma~\ref{tools:lemmaM}, and at most one descent or root stitch
leaving each family (Lemma~\ref{all-even:lemmaI}).  The one-odd
theorem glues the blocks of its trailing-color decomposition along
parallel pairs whose prescribed edge is read off the all-even cycle,
using the shifted stutters to force one edge inside every block
without prescribing anything inside the inherited two-odd blocks.
The family $\sig{2j,2,1}$ is absorbed through degree-$2$ forced
edges, and signatures with two or more odd multiplicities are the
inherited Stachowiak theorem.

The article relies on exactly two inherited Hamiltonicity results,
both stated in Section~\ref{sec:inherited}: Stachowiak's multi-odd
theorem, consumed as an arbitrary-cycle black box, and Verhoeff's
binary classification, consumed only for the exactness of the
exception set.  No availability hypothesis, counting condition, or
finite search remains on the route: the only finite ingredients are
the two explicit cycles of Appendix~\ref{appendix:certificates}, of
$28$ and $84$ words.

The proof was developed with AI assistance in a way that the
introduction records in condensed form
(\S\ref{intro:provenance}).  Two supporting artifacts accompany the
article.  Every construction has an executable form in the Python
package \texttt{nswap}, machine-checked against brute-force
neighbor-swap graphs at small parameters
(Appendix~\ref{appendix:machine-verification}); and the hypercube proof
and the Stachowiak theorem are formalized in Lean~4 over Mathlib,
with a machine-checked bridge supplying the two-odd blocks
(Appendix~\ref{appendix:formalization}).  The Lean development is documented
statement by statement, so that it can be ported and, in the parts
that are stable and self-contained, offered to Mathlib.

\subsection{Future work}

\begin{itemize}[itemsep=2pt]
  \item \emph{A self-contained two-odd proof.}  The inherited
  theorem is the only multi-odd input.  A self-contained proof of
  the exactly-two-odd case would reduce the inherited input to the
  three-or-more-odd case, and a self-contained treatment of that case
  would remove it entirely.
  \item \emph{The path side.}  The reformulated conjecture also has a
  path form at the two exceptional families, and Lehmer's original
  conjecture is about paths; the present article consumes the path
  clauses only for exactness.  A uniform treatment of the path
  deliverables remains open, though the literature covers the
  families.
  \item \emph{Mathlib.}  The hypercube layer, the Stachowiak package, and
  the two-odd bridge are candidates for contribution to Mathlib once
  the toolchain port and the naming and documentation pass are
  complete.
\end{itemize}

\section*{Acknowledgments}
\addcontentsline{toc}{section}{Acknowledgments}

I thank Anton Bakker for challenging me to pick up this problem again
last March, when Opus~4.7 appeared: ``Software is done, and I bet
mathematics as well,'' he said, and I replied that I did not believe it
one bit.  Two years earlier, AI had still seemed like a child at
research-level mathematics, so I was happily surprised by the mature
discussion that followed, and I pursued the problem.  I worked with
various Claude models --- Opus~4.7, Opus~4.8, Fable~5 (and
Fable~5.1), and Opus~5 --- but progress
was slow; after some six months the proof (then almost 200 pages of
dense mathematics) and its Lean formalization (more than one hundred
thousand lines) came together.  Then Opus~5.5 appeared, and Anton
asked it to come up with a simple proof from scratch, which it did
within a couple of hours.
So thank you, Anton, for hanging in with me.  I was also helped by
several people at the Supercomputing Center (SCC) of Eindhoven
University of Technology, and
I thank them for allowing me to use Spike-1.

The hypercube proof at the heart of this article was found and developed by
Claude Opus 5.5 in a multi-agent proof effort.  I also thank Harmonic for the Aristotle system that
produced the Lean formalization, and the participants of the earlier
development route of this project.  The conjecture treated here is due to
D.~H.~Lehmer \cite{lehmer1965}; the reformulation in terms of stutter
words, the binary classification, and the open case's cycle
cover are due to T.~Verhoeff \cite{verhoeff2017}; the multi-odd
theorem is due to G.~Stachowiak \cite{stachowiak1992}; and
M.~Opperman \cite{opperman2024} pursued the merge-of-covers strategy
in his master's thesis.

\section*{Note on authorship and AI collaboration}
\addcontentsline{toc}{section}{Note on authorship and AI collaboration}
\label{front:ai-note}

The author is the lead author: he set the direction of the project,
made all mathematical and editorial decisions, supervised every
session, and takes responsibility for the contents.  The work was
carried out with AI systems, in three roles.

\begin{itemize}[itemsep=2pt]
  \item \textbf{Claude Opus 5.5} (Anthropic) found the two ideas at
  the heart of the proof --- the partition of the words into
  hypercubes (the orbits of the intra-domino swaps, with the stutters
  as their fixed points) and the matching of odd swaps that spans the
  hypercubes --- in a study phase
  that surveyed untried representations, and then developed and
  audited the proof in successive multi-agent rounds.  This article's
  proof text re-derives that proof in the article's notation, with
  independent machine verification by the project.
  \item \textbf{Aristotle} (Harmonic) formalized the hypercube proof and
  the Stachowiak construction in Lean~4 over Mathlib, as separate
  packages whose export is built and checked in the accompanying Lean
  development (Appendix~\ref{appendix:formalization}).
  \item \textbf{DeepSeek V41 Flash} (run on Spike-1, the supercomputer
  of Eindhoven University of Technology) performed the project-side
  integration: the bridges
  between the packages, the independent \texttt{nswap} implementations
  and gates of Appendix~\ref{appendix:machine-verification}, and the
  preparation of this article.
\end{itemize}

\section*{The ornamental braid}
\addcontentsline{toc}{section}{The ornamental braid}
\label{back:braid}

The decorated edition of this article carries an ornamental braid drawn
as a banner across the top of every page, spanning the text width: a
Lehmer cycle on
the words with signature $(4,2,3)$, drawn with one strand per color
and one crossing per neighbor swap, the strands colored blue ($0$),
orange ($1$) and green ($2$).  Here $2$ is the color of odd
multiplicity: it occurs three times, while $0$ occurs four and $1$ two times.  The cycle
is the Hamiltonian cycle of the non-stutter words from the
one-odd theorem (Theorem~\ref{thm:one-odd}) --- the $1248$
words, with the $12$ stutters attached as out-and-back kinks, so
the drawn braid has $1272$ crossings --- assembled by the block
construction of Section~\ref{sec:one-odd}.  Blocks are read off the
last color, and the cycle runs through four arcs in the order $0$,
$2$, $1$, $2$, of lengths $560$, $236$, $280$ and $172$ crossings:
the $408$ words ending in $2$ --- the words of the all-even signature
$(4,2,2)$ with a final $2$ appended --- carry the all-even cycle (Theorem~\ref{thm:all-even}) in two arcs, and the $560$
words ending in $0$ and the $280$ ending in $1$ carry cycles
from the inherited multi-odd theorem
(Theorem~\ref{inherited:stachowiak}), each in one arc.  The seams
between the all-even and the two-odd arcs are the four swaps of the
last two colors at the shifted stutters
(Lemma~\ref{one-odd:shifted}), here the words $01100220$ and $10022001$
--- each with a final $2$, or one swap of such a word.  The structure
is made visible.  Inside the all-even block the background is a
checkerboard, one column per domino, whose phase flips exactly at the
odd
swaps, so each hypercube visit sees one fixed checkerboard and consecutive
visits see opposite ones; the two-odd blocks are tinted flat, warm for
the block ending in $0$ and cool for the one ending in $1$.  Each of
the twelve stutters --- an out-and-back kink at its word ---
carries a thin double rule at its tip, and each of the four seams a
mark of its own.  Every word sits on a regular cross section
where the strands run straight; the crossings halfway between are drawn
perpendicular, and the level length is fitted per page so that the
braid spans the text width exactly.  The decoration is ornamental and
no part of the proof depends on it; the scripts and data that generate
it accompany the submission.

\addcontentsline{toc}{section}{References}
% References frozen from main.bbl (arXiv does not run BibTeX).

\appendix
\section{The two explicit cycles}
\label{appendix:certificates}

A cyclic sequence of words is a Hamiltonian cycle of $N(S)$
when consecutive words (cyclically) differ by a swap of two
adjacent distinct colors, the words are distinct non-stutters,
and there are as many of them as there are non-stutters.  The proof
uses the following two finite objects; each is small enough to check
by hand, and both are checked by the machine layer
(Appendix~\ref{appendix:machine-verification}).

\subsection{\texorpdfstring{$\sig{2,2,1}$ (used for $j = 1$)}{(2,2,1) (used for j = 1)}}
\label{appendix:certificates:a1}

Let the multiset be $\{0,0,1,1,2\}$ and let $N(2,2,1)$ have the $28$
non-stutters.  The following cyclic sequence of words is a Hamiltonian
cycle of $N(2,2,1)$:
\begin{center}
\ttfamily\footnotesize
\begin{tabular}{lllllll}
00121 & 01021 & 01012 & 01102 & 10102 & 10012 & 10021\\
10201 & 01201 & 02101 & 02110 & 20110 & 21010 & 21100\\
12100 & 11200 & 11020 & 10120 & 01120 & 01210 & 10210\\
12010 & 12001 & 21001 & 20101 & 20011 & 02011 & 00211
\end{tabular}
\end{center}
The last word $00211$ is adjacent to the first word $00121$.  The
cycle is used in Section~\ref{one-odd:conclusion} for $j = 1$.

\subsection{\texorpdfstring{$\sig{2,2,2}$}{(2,2,2)}}
\label{appendix:certificates:a2}

Let the multiset be $\{0,0,1,1,2,2\}$ and let $N(2,2,2)$ have the
$84$ non-stutters.  Let $r$ be the color rotation $c \mapsto c+1
\pmod 3$; it is an automorphism of $N(2,2,2)$ that fixes no
word, so its orbits on the non-stutters have size $3$.  Let
$\Pi$ be the $28$-word path
\begin{center}
\ttfamily\footnotesize
\begin{tabular}{lllllll}
001212 & 002112 & 002121 & 001221 & 010221 & 100221 & 100212\\
100122 & 010122 & 011022 & 101022 & 101202 & 011202 & 012102\\
102102 & 120102 & 210102 & 210120 & 201120 & 201102 & 021102\\
021120 & 012120 & 011220 & 101220 & 102120 & 120120 & 121020
\end{tabular}
\end{center}
Then $\Theta = \Pi, r(\Pi), r^2(\Pi)$, closed up, is a Hamiltonian
cycle of $N(2,2,2)$.  The hand check has four parts:
\begin{enumerate}[label=\textup{(\arabic*)},itemsep=1pt]
  \item Consecutive words of $\Pi$ differ by one swap of two adjacent
  distinct colors (and so do those of $r(\Pi)$ and $r^2(\Pi)$).
  \item The last word $121020$ is adjacent to
  $r(001212) = 112020$; applying $r$ and $r^2$ gives the other two
  junctions.
  \item $\Pi$ meets each of the $28$ orbits of $r$ exactly once, so
  $\Theta$ visits each of the $3 \cdot 28 = 84$ non-stutters once.
  \item At each of $v'_0 = 011220$, $v'_1 = 122001$ and
  $v'_2 = 200112$, the cycle $\Theta$ contains the position-$0$ and
  the position-$2$ edge, and so not the position-$4$ edge.
\end{enumerate}
Fact \textup{(4)} is the one used in Section~\ref{one-odd:conclusion}
for $S^x = \sig{2,2,2}$; by the $r$-invariance of $\Theta$ it
suffices to check it at $v'_0$.

\section{Machine verification}
\label{appendix:machine-verification}

Every construction in this article is implemented, in the same
generality as the corresponding proof, in a Python package that is
distributed with this submission (\texttt{nswap}), and machine-checked
against brute-force neighbor-swap graphs at small parameters.  Each
verifier re-derives the construction exactly as the text describes it,
then checks the result against the actual graph $N(S)$: the exact
non-stutter vertex set, the validity of every swap, and the presence
of every guaranteed edge claimed in the statements and proofs.  The
package builds the objects that the proofs prescribe, without search
or solvers.  The mathematical proofs stand on their own; the machine
layer is an independent check of them, and the package's public
functions carry pre- and postconditions citing the article labels
they implement.

The package is built from the same definitions as the article.  The
hypercube constructions use only the core vocabulary (signatures, words,
the stutter test, the graph and its edge relation); the two-odd blocks
are served by the faithful implementation of the inherited theorem;
and the assembly has its own end-to-end gate.  The parameters below
are those of the standing runs.

{\footnotesize
\begin{longtable}{p{0.30\textwidth}p{0.62\textwidth}}
\caption{The machine-verified units of the article and the parameters
of the standing runs.}
\label{appendix:machine-verification:table}\\
\hline
\textbf{Unit} & \textbf{Claims, and parameters of the standing run}\\
\hline
\endfirsthead
\hline
\textbf{Unit} & \textbf{Claims, and parameters (continued)}\\
\hline
\endhead
\hline
\endfoot
Theorem~\ref{thm:all-even} (the all-even cycle) &
(CA1) hypercube structure (Lemma~\ref{cube:lemma1}): on every even-size
signature with $n \le 8$, each skeleton class is a hypercube on its
mixed directions ($\#\text{hypercube} = 2^r$), even swaps stay inside a
hypercube and odd swaps leave it, and for all-even signatures every
non-stutter hypercube has $r \ge 2$.  (CA2) Lemma~\ref{tools:lemmaM} and
(INV): on the same window, the returned edges are odd edges, form a
matching, and, between hypercubes, a spanning tree of the hypercubes; the free
words end in their colors.  (CA3) families, stitches,
Lemma~\ref{all-even:lemmaI}, property~\textup{(P)},
Lemma~\ref{cube:lemmaN}, and the stitch spanning tree, on every
all-even signature with $n \le 8$ (chain-of-families, non-rung
directions, distinctness, pairwise distinct rungs per hypercube).
(CA4) Theorem~\ref{thm:all-even} end to end on every all-even
signature with $n \le 8$: the constructed cycle lists every
non-stutter exactly once, consecutive words adjacent.  Brute-force
hypercube re-derivations are gate-local; the extended window CA1--CA3 at
$n \le 10$ ($682$ even-size, $26$ all-even signatures) also
passes.\\[2pt]
Theorem~\ref{thm:one-odd} (the one-odd cycle) &
(CO1) Lemma~\ref{one-odd:blocks}: on every one-odd signature with
$n \le 9$, the blocks partition the non-stutters, only the
$\rho$-edges cross between them, and no $T_z$ word is a stutter.
(CO2) Lemmas~\ref{one-odd:parallel} and~\ref{one-odd:shifted}: for
the general route, $v'_z$ is anchored, the dimension rule holds and
$\dim C_z \ge 4$ (the arity-$2$ case gives $2$), the rungs are the
last direction, the parallel pairs are $4$-cycles of $N(S)$ with
non-rung partners, the degree formula holds, and the forced edge is
a non-exit edge at $v_z$.  (CO3) Lemma~\ref{one-odd:anchored},
Lemma~\ref{cube:lemmaN}, property~\textup{(P)}, and the routing:
the $C_z$ are pairwise distinct, the $v'_z$ hypercube edges are met by no
stitch and $C_z$ is met by no non-rung stitch, and $\Theta$ is a
Hamiltonian cycle of $N(S^x)$ containing every partner $g_z$.
(CO4) the certificates and Section~\ref{sec:family}: A.1 is a
Hamiltonian cycle of $N(2,2,1)$; A.2 is a Hamiltonian cycle of
$N(2,2,2)$ containing the position-$0$ and position-$2$ edges at
every $v'_z$; the grid construction is the cycle of
Lemma~\ref{family:gamma} containing the row-$0$ edge, and the
family construction is a Hamiltonian cycle of $N(2j,2,1)$, for
$j \le 6$.  (CO5) Theorem~\ref{thm:one-odd} end to end on every
one-odd signature with $n \le 9$, with the two-odd block cycles from
the Stachowiak implementation.\\[2pt]
The family $\sig{2j,2,1}$ &
(CO4): as above, the grid cycle for $j \le 6$ and the assembled
cycle of $N(2j,2,1)$ for $j \le 6$; the local degree facts and the
$\rho$-identities of Proposition~\ref{family:prop} are re-derived
gate-locally, and the block cycles $K_0$ and $\Theta$ are drawn
from the independent implementations (Stachowiak and the all-even
construction).\\[2pt]
Theorem~\ref{assembly:theorem} (the assembled conjecture) &
(A1): the dispatch is exhaustive and disjoint at every multiset with
$\ge 2$ colors, $n \le 16$, arity $\le 8$ ($778$ multisets); the
exception predicate (binary with an odd multiplicity, or a
permutation of $\sig{2k,1,1}$) equals the path-only trichotomy, each
producer's domain hypothesis holds, and the package's own predicate
agrees.  (A2) a gate-local dispatch of
Theorem~\ref{assembly:theorem} at every in-scope signature with
$n \le 12$, $\lvert\operatorname{Perm}\rvert \le 20\,000$ ($210$
signatures: $56$ binary both-even to $n = 16$, $154$ of arities
$3$--$7$, all arrangements at arity $3$ with $n \le 9$), each a
Hamiltonian cycle of the independently re-derived $N(S)$ ($531\,918$
vertices in all, largest $18\,840$; $81$ exceptions refused): $66$
all-even by the all-even construction, $40$ one-odd by the one-odd
construction (the two-odd blocks from the Stachowiak
implementation), and $104$ multi-odd by the Stachowiak construction
itself; no search anywhere.  Standing run: $4.1$~s.\\[2pt]
Theorem~\ref{inherited:stachowiak} (the inherited input) &
(ST1)--(ST4): the faithful implementation of Stachowiak's 1992
construction (Lemmas~1--12 and the main theorem, via the conventions
recorded with the implementation), every object checked vertex by
vertex and edge by edge against adjacency recomputed from the swap
rule alone.  (ST1) structural re-derivation of the
$\lvert Q \rvert = 2$ layer; (ST2) its end-to-end window ($230$
cycles $+$ $12$ paths, $2\,899\,056$ vertices); (ST3) the series
compositions (Lemmas~7--9): $507$ cycles, $1\,934\,460$ vertices;
(ST4)
Lemmas~10--12 and the public construction: Lemma-10 structure ($352$
piece isomorphisms, $1\,408$ degree-$2$ forcings, the glue
dichotomy re-derived at $689$ path boundaries), Lemma-10 end
to end ($47$ cycles, $644\,066$ vertices), the Lemma-11 anchor pick
re-derived, and the public construction over $289$ multi-odd
signatures, $1\,575\,778$ vertices --- all arrangements of arity
$3$--$4$ with $n \le 10$ and families out to $n \le 16$ --- plus
$23$ refusals (Lemma~12's family and off-domain) with exact-search
no-cycle certificates at $\sig{2,1,1}$ and $\sig{4,1,1}$.  One
official run: $274$~s.\\[2pt]
Theorem~\ref{inherited:binary-theorem} (the binary classification) &
(VB1)--(VB6): Verhoeff's recursion, implemented deterministically,
delivers the Hamiltonian path or cycle at every binary signature
$(k_0,k_1)$ with $k_0 + k_1 \le 14$, checked against the
independently enumerated graph, in all three parity cases, including
the two forced endpoints of the odd--odd path and the two
guaranteed edges of the even--even cycle; determinism across fresh
interpreters; and the solver-free claim, the odd--odd, odd--even and
even--even objects building with every solver entry point
embargoed.\\
\end{longtable}
}

The \texttt{nswap} package also carries the standard vocabulary and
the deterministic binary construction used by the checks above.  The
package is about $5\,400$ lines of Python, and the standing verifiers
and their tests add about $3\,800$ more.  It needs Python~3.11 or
later and the standard library only; the standing runs have been
checked under Python~3.11 and~3.14.  The programs of the earlier,
much longer development route are not part of this submission and do
not enter the table.

\section{Lean formalization}
\label{appendix:formalization}

The proof of this article is formalized in Lean~4 over Mathlib
(Lean~4 v4.28.0, with the Mathlib release tagged \texttt{v4.28.0},
commit \texttt{8f9d9cf}), as three layers.
\begin{enumerate}[label=\textup{(\arabic*)},itemsep=2pt]
  \item The \emph{hypercube package} (the Lean package is named
  \texttt{Cube}; $49$ modules) formalizes the
  all-even and one-odd halves of the hypercube proof:
  Lemma~\ref{cube:lemma1} (its (b) clause),
  Lemma~\ref{cube:boustrophedon}, Lemma~\ref{tools:lemmaM} with
  (INV), Lemma~\ref{all-even:lemmaI}, property~\textup{(P)},
  Lemma~\ref{cube:lemmaN}, Theorem~\ref{thm:all-even}, and the
  one-odd ladder --- Lemmas~\ref{one-odd:blocks},
  \ref{one-odd:parallel}, \ref{one-odd:shifted},
  \ref{one-odd:anchored}, \ref{family:gamma},
  Propositions~\ref{one-odd:assembly}, \ref{one-odd:routing},
  \ref{family:prop}, and Theorem~\ref{thm:one-odd} --- conditional
  on one named input: the exactly-two-odd cycle of
  Proposition~\ref{one-odd:assembly}(a).  The gluing machinery of
  Lemma~\ref{prelim:tree-gluing} is developed as a separate
  sublibrary.
  \item The \emph{Stachowiak package} ($31$ modules) formalizes
  Stachowiak's paper \cite{stachowiak1992} --- its Lemmas~1--12 and
  both corollaries, paper-faithful, with the two-odd corollary
  carrying the paper's arity bound --- conditional on the odd--odd
  binary path it cites.
  \item A third layer supplies the bridges.  The odd--odd binary
  path is proved from the binary induction, discharging the
  Stachowiak package's input; the resulting multi-odd theorem also
  discharges the hypercube package's named input; and the hypercube graph is
  transported across the vertex and edge identities to the article's
  vocabulary, assembling the headline along the hypercube route plus
  Stachowiak alone.  The same layer formalizes, in the article's
  vocabulary, the preliminaries of Section~\ref{sec:preliminaries}:
  the parity census (Lemma~\ref{prelim:stutter-parity}), the
  distance between stutters (Lemma~\ref{prelim:stutter-distance}, in
  Verhoeff's stronger form that it is a multiple of~$4$), and
  Verhoeff's reduction (Theorem~\ref{prelim:restriction}), for
  Hamiltonian paths and cycles.  The no-cycle half of
  Proposition~\ref{prelim:path-only-2k11} is Stachowiak's Lemma~12 in
  the Stachowiak package.
\end{enumerate}

Together the three layers are about $37\,700$ lines of Lean: about
$10\,800$ for the hypercube package, $8\,300$ for the Stachowiak
package, and $18\,700$ for the bridges, the preliminaries, and the part
of the project library that they depend on.

The machine-checked headline is the theorem
\texttt{reformulatedLehmer\_of\_cube}, a proof of the reformulated
conjecture of this article; the one-odd theorem is available with
its named input substituted, and the at-most-one-odd half as a named
interface.  The development compiles with no \texttt{sorry} and no
stray \texttt{axiom}, and \texttt{\#print axioms}
on the headline targets and on the preliminaries reports exactly
\texttt{[propext, Classical.choice, Quot.sound]}.  The two packages
are included with the development and machine-checked there.  The
bridge modules connect them to the article's vocabulary, and their
transport theorems (the identification of stutters, vertex sets, edge
relations, and cycle systems) are themselves proved in Lean, so
nothing is assumed where the packages meet.

Each package is accompanied by a blueprint: a statement-by-statement
narrative that links every mathematical statement to its Lean
declaration and records whether it is formalized, stated, or
pending.  The blueprint of the hypercube package follows the order of this
article's sections and thus doubles as a second, machine-checked
table of contents for the proof.  The development is pinned at the
toolchain version of the original formalization; a port to the
current stable toolchain, and with it the preparation of the stable
parts for contribution to Mathlib, is in progress and follows the
same blueprint structure.

\end{document}